\newtheorem{thm}{Theorem}[section]
\newtheorem{lem}[thm]{Lemma}
\newtheorem{prop}[thm]{Proposition}
\theoremstyle{definition}
\newtheorem{defn}[thm]{Definition}
\theoremstyle{remark}
\newtheorem{rem}[thm]{Remark}
\numberwithin{equation}{section}
\newcolumntype{H}{>{\setbox0=\hbox\bgroup}c<{\egroup}@{}}
\begin{document}

\title{Minimum energy configurations on a toric lattice as a quadratic assignment problem}

\author{Daniel Brosch\thanks{Tilburg University, \texttt{D.Brosch@uvt.nl} This project has received funding from
 the European Union’s Horizon 2020 research and innovation programme under the Marie Sk{\l}odowska-Curie grant agreement No 764759.}
 \And Etienne de Klerk\thanks{Tilburg University, \texttt{E.deKlerk@uvt.nl}}}
\date{\today}%

\maketitle
\begin{abstract}
We consider three known bounds for the quadratic assignment problem (QAP): an eigenvalue, a convex quadratic programming (CQP),
 and a semidefinite programming (SDP) bound. Since the last two bounds were not compared directly before, we prove that the SDP
  bound is stronger than the CQP bound. We then apply these to improve known bounds on a discrete energy minimization problem,
   reformulated as a QAP, which aims to minimize the potential energy between repulsive particles on a toric grid.
    Thus we are able to prove optimality for several configurations of particles and grid sizes, complementing
     earlier results by Bouman, Draisma and Van Leeuwaarden [ {\em SIAM Journal on Discrete Mathematics}, 27(3):1295--1312, 2013].
{\color{black}The semidefinite programs in question are too large to solve without pre-processing,
and we use
     a symmetry reduction method by Parrilo and Permenter [{\em Mathematical Programming}, 181:51--84, 2020] to make computation of the SDP bounds possible.}
\end{abstract}

\keywords{Quadratic assignment problem \and Semidefinite programming \and Discrete energy minimization\and Symmetry reduction}
\noindent\textbf{AMS subject classification} {90C22; 90C10}


\section{Introduction}

A quadratic assignment problem in Koopmans-Beckmann form is given by three matrices $A = (a_{ij}),B = (b_{ij}),C = (c_{ij})\in\mathbb{R}^{n\times n}$, and can be written as
\begin{equation*}
  QAP(A,B,C) = \min_{\varphi\in S_n} \left(\sum_{i,j=1}^{n}a_{ij}b_{\varphi(i)\varphi(j)}+\sum_{i=1}^{n}c_{i\varphi(i)}\right),
\end{equation*}
where $S_n$ is the set of all permutations of $n$ elements. If $C=0$, then we shorten the notation to $QAP(A,B)$, {\color{black}and if all data matrices are symmetric, we call the quadratic assignment problem symmetric.}

This is a quadratic optimization problem, which can be seen if we write the objective using permutation matrices:
\begin{equation*}
  \min_{X\in \Pi_n} \langle A,XBX^T\rangle + \langle C,X\rangle,
\end{equation*}
where $\langle X,Y\rangle = \mathrm{tr}(X^TY)$ is the trace inner product, and $\Pi_n$ the set of $n\times n$ permutation matrices.

Because of the very general form of the problem, it is not surprising that it is NP-complete (see for example \S 7.1.7 in \cite{burkard2009assignment}),
which motivates the search for good approximations and bounds; see, e.g., the survey \cite{QAP survey EJOR} and the book \cite{burkard2009assignment} for an overview.
In Section \ref{QAPBoundSection} we describe three such bounds, in both increasing complexity and strength.
The first is a projected eigenvalue bound, which was first introduced in \cite{hadley1992new}, which, similar to
 the eigenvalue bound of \cite{finke1987quadratic}, is based on the eigenvalues of the data matrices.
 The second bound, a convex quadratic programming bound, then improves this bound by adding a convex quadratic
  term to the objective, as introduced in \cite{anstreicher2001new} (see also \cite{anstreicher2001,anstreicher2002}). The third bound, which was introduced in \cite{zhao1998semidefinite} and later reformulated in \cite{povh2009copositive}, is a semidefinite programming relaxation of the quadratic assignment problem. As it is the most complex computationally, it is natural to expect it to be stronger than the two other bounds, which we prove in our first main result Theorem \ref{SDPBetterProj}.

In Section \ref{EnergyMinSection} we then apply the three bounds to a discrete energy minimization problem. It was first described in \cite{taillard1995comparison} as the problem of printing a particular {\color{black}shade of grey}, by repeating the same tile of black and white squares in all directions. Other applications from physics is the search for ground states of a two-dimensional repulsive lattice gas at zero temperature (\cite{watson1997repulsive}), and more generally the Falicov-Kimball model (\cite{falicov1969simple,kennedy1994some}), which is relevant for modelling valence fluctuations in transition metal oxides, binary alloys and high-temperature super-conductors (\cite{watson1997repulsive}).

To get a distribution of black and white tiles as equal as possible, it is natural to view {\color{black}the problem of printing a shade of grey} as a problem of minimizing the potential energy between repulsive particles on a toric grid. This problem can then be reformulated as a quadratic assignment problem, which allows us to apply the three bounds of Section \ref{QAPBoundSection} to this problem. We will see in Proposition \ref{EigenIsProjBound} and Proposition \ref{PBisQPB} that both the projected eigenvalue bound, as well as the convex quadratic programming bound, coincide with an eigenvalue bound for this problem introduced in \cite{bouman2013energy}.
{\color{black}
In Section \ref{sec:torusQAPreduce} we describe the technique one may use to calculate the semidefinite programming bound,
 which involves a symmetry reduction of the problem to a more manageable size. Our approach is based on the recent Jordan reduction method of Parrilo and Permenter \cite{permenter2016dimension}.
}
Finally, in Section \ref{sec:numerical results} we present numerical results on the bounds for instances on different grid sizes,
including the semidefinite programming bound after Jordan reduction,  and thus prove  optimality of certain grid arrangements. In this way
we extend  earlier results by Bouman, Draisma and Van Leeuwaarden \cite{bouman2013energy}.

\section{Bounds for quadratic assignment problems}\label{QAPBoundSection}
In this section we will consider three different bounds for QAPs, of increasing computational complexity. These are then compared to each other in Section \ref{BoundComparisonSubSection}, and applied to an energy minimization problem in Section \ref{EnergyMinSection}.
\subsection{Projected eigenvalue bound}
The first bound relevant for this paper is the projected eigenvalue bound, which was introduced in \cite{hadley1992new}, a stronger variant of the eigenvalue bound for QAP (see \cite{finke1987quadratic}), which is based on projecting the matrices into a space the same dimension as the span of the permutation matrices.

{\color{black}We denote the all-ones vector with $e$, and the elements of the canonical basis as $e_i$.}

\begin{prop}[\cite{hadley1990bounds},\cite{hadley1992new}, cf. Prop. 7.23 in \cite{burkard2009assignment}]\label{PBBound}
  Let $V$ be the $n\times (n-1)$ matrix, of which the columns form an orthonormal basis of the orthogonal complement of the all-ones vector $e$. Define $\tilde{A} \coloneqq V^T AV$, $\tilde{B} \coloneqq V^T BV$, and collect their eigenvalues in the vectors $\lambda_{\tilde{A}}$ and $\mu_{\tilde{B}}$ respectively. Set $D\coloneqq\frac{2}{n}Aee^TB$. The \emph{projected eigenvalue bound} for the symmetric $\mathrm{QAP}(A,B)$ is given by
  \begin{equation}
    \mathrm{PB}(A,B) \coloneqq \langle \lambda_{\tilde{A}},\mu_{\tilde{B}}\rangle^-+\min_{\varphi\in S_n}\sum_{i=1}^{n}d_{i\varphi(i)}-\frac{(e^TAe)(e^TBe)}{n^2},
  \end{equation}
  where $\langle x ,y\rangle^-=\min_{\varphi\in S_n} \sum_{i=1}^{n} x_{\varphi(i)}y_i$. One then has $PB(A,B)\leq QAP(A,B)$.
\end{prop}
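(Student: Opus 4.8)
The plan is to pass to an orthonormal basis adapted to the splitting $\mathbb{R}^n = \mathrm{span}(e)\oplus e^\perp$ and to exploit that every permutation matrix fixes $e$. First I would form the orthogonal matrix $Q = [\tfrac{1}{\sqrt n}e \mid V]$, so that conjugation by $Q$ block-diagonalizes the action of any $X\in\Pi_n$: since $Xe=e$ and $e^TX=e^T$, one obtains
\begin{equation*}
Q^TXQ = \begin{pmatrix} 1 & 0 \\ 0 & \hat X\end{pmatrix}, \qquad \hat X := V^TXV,
\end{equation*}
and because $Q^TXQ$ is orthogonal, the block $\hat X$ is itself an $(n-1)\times(n-1)$ orthogonal matrix. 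The same conjugation writes $A$ and $B$ in $2\times 2$ block form, with corner entries $\tfrac1n e^TAe$ and $\tfrac1n e^TBe$, off-diagonal blocks built from $V^TAe$ and $V^TBe$, and principal blocks $\tilde A=V^TAV$ and $\tilde B=V^TBV$.

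Next I would expand the objective as $\langle A,XBX^T\rangle = \mathrm{tr}(Q^TAQ\cdot Q^TXQ\cdot Q^TBQ\cdot Q^TX^TQ)$ and multiply out the blocks. Taking the trace produces exactly three contributions: a quadratic term $\mathrm{tr}(\tilde A\hat X\tilde B\hat X^T)$, the two corner entries, and a cross term. The matrix $D$ appears in the cross term: using $VV^T = I-\tfrac1n ee^T$ together with $Xe=e$ I would verify the identity $V\hat XV^T = X-\tfrac1n ee^T$, which converts the cross term into $\langle D,X\rangle$ and, combined with the corner contribution, leaves precisely the constant $-\tfrac{(e^TAe)(e^TBe)}{n^2}$. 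Collecting everything gives the exact identity
\begin{equation*}
\langle A,XBX^T\rangle = \mathrm{tr}(\tilde A\hat X\tilde B\hat X^T) + \langle D,X\rangle - \frac{(e^TAe)(e^TBe)}{n^2},
\end{equation*}
valid for every $X\in\Pi_n$. This bookkeeping is routine but must be done carefully to pin down the coefficient of the cross term.

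Finally I would relax and minimize. Since each $\hat X$ is orthogonal, I can replace the minimization of the quadratic term over permutations by a minimization over the whole orthogonal group and invoke the classical eigenvalue result that $\min_{Y\ \text{orthogonal}}\mathrm{tr}(\tilde A Y\tilde BY^T) = \langle\lambda_{\tilde A},\mu_{\tilde B}\rangle^-$, the value obtained by aligning the eigenvalues of $\tilde A$ and $\tilde B$ in opposite order. Hence $\mathrm{tr}(\tilde A\hat X\tilde B\hat X^T)\ge\langle\lambda_{\tilde A},\mu_{\tilde B}\rangle^-$ for every $X$. Substituting this pointwise lower bound into the identity turns the quadratic part into a constant that no longer couples to $X$, so the residual minimization separates off the linear assignment term $\min_{\varphi\in S_n}\sum_i d_{i\varphi(i)} = \min_{X\in\Pi_n}\langle D,X\rangle$; taking $\min_X$ of both sides then yields $\mathrm{PB}(A,B)\le\mathrm{QAP}(A,B)$.

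The main obstacle is the eigenvalue inequality used in the last step: the block identity is pure linear algebra, but the validity (and strength) of the bound rests on the theorem that the minimum of $\mathrm{tr}(\tilde A Y\tilde B Y^T)$ over the orthogonal group equals the minimal scalar product of the two eigenvalue vectors. I would either cite this result directly or reprove it by diagonalizing $\tilde A$ and $\tilde B$, observing that the entrywise squares of the intertwining orthogonal matrix form a doubly stochastic matrix, and reducing the problem to the rearrangement inequality over the Birkhoff polytope.
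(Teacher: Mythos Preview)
Your proof is correct and follows exactly the classical argument of Hadley, Rendl, and Wolkowicz that the paper cites: block-diagonalize via $Q=[\tfrac{1}{\sqrt n}e\mid V]$, expand $\mathrm{tr}(AXBX^T)$ into the quadratic term $\mathrm{tr}(\tilde A\hat X\tilde B\hat X^T)$, the linear term $\langle D,X\rangle$, and the constant $-\tfrac{(e^TAe)(e^TBe)}{n^2}$, and then relax $\hat X$ to the full orthogonal group. The paper does not give its own proof of this proposition (it is quoted from the references), but it uses precisely the same decomposition $X=\tfrac1n ee^T+VYV^T$ and the same expansion of the objective when deriving the $QPB$ and $SDPPB$ bounds, so your argument is fully aligned with the paper's framework.
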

One may calculate $\mathrm{PB}(A,B)$ by sorting $\lambda_{\tilde{A}}$ and $\mu_{\tilde{B}}$ to compute $\langle \lambda_{\tilde{A}},\mu_{\tilde{B}}\rangle^-$ (see Proposition 5.8 in \cite{burkard2009assignment}) and solving one linear assignment problem $\min_{\varphi\in S_n}\sum_{i=1}^{n}d_{i\varphi(i)}$.
\subsection{QP bound}
The second bound we consider is a convex quadratic programming (CQP) bound, introduced in \cite{anstreicher2001new}, which is based on the same projection as the bound in Proposition \ref{PBBound}. We will see that it is at least as good as the projected eigenvalue bound. Here we relax $X\in\Pi_n$ to $Xe=X^Te=e$ and $X\geq 0$, i.e. we optimize over doubly stochastic matrices instead of permutation matrices.

{\color{black}In the following $I_n$ and $J_n$ denote the identity and all-ones matrices respectively of size $n\times n$, $E_{ij}$ denotes the $n\times n$ matrix with a single one at position $(i,j)$, and $\otimes$ the Kronecker product.}

  Hadley, Rendl and Wolkowicz (\cite{hadley1990bounds},\cite{hadley1992new}) observed that every doubly stochastic matrix can be written as
  \begin{equation*}
    X=\frac{1}{n}ee^T + VYV^T,
  \end{equation*}
  where $V$ is the $n\times (n-1)$ matrix of which the columns form an orthonormal basis of the orthogonal complement of $e$, as before. We have $V^TV = I_{n-1}$, $VV^T = I_n - \frac{1}{n}ee^T$ and $Y = V^TXV$. As before, we set $\tilde{A}=V^TAV$ and $\tilde{B}=V^TBV$, and collect their eigenvalues in the vectors $\lambda_{\tilde{A}}$ and $\mu_{\tilde{B}}$.

  In Section 3 of \cite{anstreicher2001new}, Anstreicher and Brixius introduce the following CQP bound for quadratic assignment problems.
  \begin{prop}[\cite{anstreicher2001new}]\label{QPBBound}
    Let $A$ and $B$ be symmetric matrices of size $n\times n$, {\color{black}and define the pair $(S^*,T^*)$ to be any optimal solution  of the problem}
    \begin{equation*}
    \max \left\lbrace\mathrm{tr}(S+T)\colon \tilde{B}\otimes\tilde{A}-I_n\otimes S-T\otimes I_n\succcurlyeq 0\right\rbrace,
  \end{equation*}
  so the matrix $\hat{Q}\coloneqq \tilde{B}\otimes\tilde{A}-I_n\otimes S^*-T^*\otimes I_n\succcurlyeq 0$ is positive semidefinite, and $\mathrm{tr}(S^*+T^*) = \langle \lambda_{\tilde{A}},\mu_{\tilde{B}}\rangle^-$. Then we get a convex quadratic bound for $QAP(A,B)$, which is at least as good as $PB(A,B)$, by
   \begin{align}
    QPB(A,B) \coloneqq \min \enspace& y^T\hat{Q}y +  \langle \lambda_{\tilde{A}},\mu_{\tilde{B}}\rangle^- + \frac{2}{n}\mathrm{tr}\left(BJ_nAX\right) - \frac{(e^TAe)(e^TBe)}{n^2},\\
    \text{s.t. }\enspace&X\geq 0 \text{ is doubly stochastic},\nonumber\\
    &X=\frac{1}{n}ee^T + VYV^T,\nonumber\\
    &y = \mathrm{vec}(Y).\nonumber
  \end{align}
  \end{prop}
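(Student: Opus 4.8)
The plan is to establish the identity that the bracketed $QPB$ objective coincides \emph{exactly} with the Koopmans--Beckmann cost $\langle A,XBX^T\rangle$ on permutation matrices, so that $QPB(A,B)$ is the minimum over doubly stochastic $X$ of a function that reproduces the QAP cost on $\Pi_n$. First I would substitute $X=\frac1nee^T+VYV^T$ and expand $\langle A,XBX^T\rangle$ into four terms, using $V^Te=0$, $V^TV=I_{n-1}$ and $VV^T=I_n-\frac1nee^T$. The constant term is $\frac{1}{n^2}(e^TAe)(e^TBe)$; the two mixed terms are equal because $A$ and $B$ are symmetric, and together give $\frac2n\mathrm{tr}(BJ_nAX)-\frac{2}{n^2}(e^TAe)(e^TBe)$; and the purely quadratic term equals $\mathrm{tr}(\tilde{A}Y\tilde{B}Y^T)=y^T(\tilde{B}\otimes\tilde{A})y$ through the Kronecker identity $(\tilde{B}\otimes\tilde{A})\mathrm{vec}(Y)=\mathrm{vec}(\tilde{A}Y\tilde{B})$. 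Collecting terms yields, for every doubly stochastic $X$,
\[
  \langle A,XBX^T\rangle=y^T(\tilde{B}\otimes\tilde{A})y+\tfrac2n\mathrm{tr}(BJ_nAX)-\tfrac{1}{n^2}(e^TAe)(e^TBe).
\]

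The step that distinguishes permutation matrices is that $Y=V^TXV$ is then orthogonal: from $XX^T=I_n$ and $Xe=e$ one gets $YY^T=V^T(I_n-\frac1nee^T)V=I_{n-1}$, and symmetrically $Y^TY=I_{n-1}$. Hence $y^T(I_{n-1}\otimes S^*)y=\mathrm{tr}(S^*YY^T)=\mathrm{tr}(S^*)$ and $y^T(T^*\otimes I_{n-1})y=\mathrm{tr}(T^*Y^TY)=\mathrm{tr}(T^*)$, so for $X\in\Pi_n$
\[
  y^T\hat{Q}y=y^T(\tilde{B}\otimes\tilde{A})y-\mathrm{tr}(S^*+T^*)=y^T(\tilde{B}\otimes\tilde{A})y-\langle\lambda_{\tilde{A}},\mu_{\tilde{B}}\rangle^-.
\]
Substituting this back shows the $QPB$ objective equals $\langle A,XBX^T\rangle$ on $\Pi_n$. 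Since $\hat{Q}\succeq0$ the term $y^T\hat{Q}y$ is convex and the remaining terms are affine in $X$, so the $QPB$ objective is convex; and because the permutation matrices are exactly the vertices of the Birkhoff polytope, minimizing this convex function over all doubly stochastic $X$ can only lower the value, giving $QPB(A,B)\le\min_{X\in\Pi_n}\langle A,XBX^T\rangle=QAP(A,B)$.

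For the comparison $QPB(A,B)\ge PB(A,B)$ I would drop the nonnegative term $y^T\hat{Q}y\ge0$, which leaves the constant $\langle\lambda_{\tilde{A}},\mu_{\tilde{B}}\rangle^-\!-\frac{(e^TAe)(e^TBe)}{n^2}$ plus the minimum of the linear function $\frac2n\mathrm{tr}(BJ_nAX)$ over doubly stochastic $X$. A linear objective over the Birkhoff polytope is minimized at a permutation, and since $\frac2n\mathrm{tr}(BJ_nAX)=\langle D,X\rangle$ with $D=\frac2nAee^TB$, this minimum is precisely $\min_{\varphi\in S_n}\sum_i d_{i\varphi(i)}$; the three pieces then reassemble into $PB(A,B)$.

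It remains to justify the fact used above that the semidefinite program has optimal value $\mathrm{tr}(S^*+T^*)=\langle\lambda_{\tilde{A}},\mu_{\tilde{B}}\rangle^-$, which I expect to be the main obstacle. Writing $\tilde{A}=U_A\Lambda U_A^T$ and $\tilde{B}=U_BMU_B^T$ with $\Lambda,M$ diagonal, conjugating the constraint by $U_B\otimes U_A$ and replacing $(S,T)$ by $(U_A^TSU_A,U_B^TTU_B)$ preserves both feasibility and $\mathrm{tr}(S+T)$, so one may assume $\tilde{A},\tilde{B}$ diagonal. Averaging a feasible $(S,T)$ over conjugations by all pairs of $\pm1$ sign matrices then keeps feasibility (the constraint matrix $M\otimes\Lambda$ is diagonal, hence invariant) and the objective, while projecting $S,T$ onto their diagonals; the constraint thus reduces to the entrywise inequalities $s_j+t_i\le\mu_i\lambda_j$, and the linear program $\max\{\sum_j s_j+\sum_i t_i:\,s_j+t_i\le\mu_i\lambda_j\}$ has LP dual $\min\{\sum_{ij}\mu_i\lambda_j x_{ij}:\,X\ \text{doubly stochastic}\}=\langle\lambda_{\tilde{A}},\mu_{\tilde{B}}\rangle^-$. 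Everything else is Kronecker/trace bookkeeping and the Birkhoff extreme-point argument; this diagonal reduction of the SDP to an assignment LP is the one genuinely nonroutine point, and is where one invokes (or may simply cite) the result of Anstreicher and Brixius.
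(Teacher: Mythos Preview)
Your proof is correct. Note, however, that the paper does not prove this proposition itself: it is stated as a result of Anstreicher and Brixius \cite{anstreicher2001new} and the reader is referred there for details. So there is no ``paper's own proof'' to compare against; what you have written is essentially a self-contained reconstruction of the Anstreicher--Brixius argument.

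That said, the paper does carry out part of your computation elsewhere. The expansion of $\mathrm{tr}(AXBX^T)$ under the substitution $X=\tfrac{1}{n}ee^T+VYV^T$ into the form $\langle\tilde{B}\otimes\tilde{A},yy^T\rangle+\tfrac{2}{n}\mathrm{vec}(V^TBJ_nAV)^Ty+\tfrac{1}{n^2}(e^TAe)(e^TBe)$ appears verbatim in the lead-up to the $SDPPB$ bound in Section~2.4, so your first paragraph matches the paper's own bookkeeping exactly. The remaining ingredients you supply --- the orthogonality $YY^T=Y^TY=I_{n-1}$ for $X\in\Pi_n$, the consequence $y^T(I\otimes S^*)y=\mathrm{tr}(S^*)$ and $y^T(T^*\otimes I)y=\mathrm{tr}(T^*)$, the Birkhoff vertex argument for the linear term, and the diagonal reduction of the SDP to an assignment LP --- are precisely the steps one finds in \cite{anstreicher2001new}, and your sketch of each is sound. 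In particular your averaging argument over $\pm1$ sign matrices to reduce to diagonal $(S,T)$ is a clean way to justify the reduction to the LP $\max\{\sum s_j+\sum t_i:\,s_j+t_i\le\lambda_j\mu_i\}$, whose dual is the assignment problem with value $\langle\lambda_{\tilde{A}},\mu_{\tilde{B}}\rangle^-$.
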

  In other words, one always has $PB(A,B)\leq QPB(A,B)$.

  One may compute $\mathrm{QPB(A,B)}$ by solving a linear assignment problem to obtain $\hat{Q}$, and then solving a CQP in $\mathcal{O}(n^2)$ variables. For details, see Section 4 in \cite{anstreicher2001new}.

\subsection{SDP bound}
 The following semidefinite relaxation for $\mathrm{QAP}(A,B,C)$ was studied by Povh and Rendl \cite{povh2009copositive}, which is equivalent
 to an earlier bound by Zhao, Karisch, Rendl and Wolkowicz \cite{zhao1998semidefinite}:
    \begin{align}\label{QAPSDP}
      SDPQAP(A,B,C) \coloneqq \min\enspace & \langle B\otimes A + \mathrm{Diag}(\mathrm{vec}(C)),Y\rangle\\
      \mathrm{s.t.}\enspace & \langle I_n\otimes E_{jj},Y\rangle=1 \text{ for }j=1,\ldots,n,\nonumber\\
      & \langle E_{jj}\otimes I_n,Y\rangle=1 \text{ for }j=1,\ldots,n,\nonumber\\
      & \langle I_n\otimes (J_n-I_n)+(J_n-I_n)\otimes I_n,Y\rangle =0,\nonumber \\
      & \langle J_{n^2},Y\rangle = n^2,\nonumber\\
      & Y\in S^{n^2}_+ \cap \mathbb{R}^{n^2\times n^2}_{\geq 0},\nonumber
    \end{align}
    where $A,B,C\in \mathbb{R}^{n\times n}$ and $A$ and $B$ are symmetric, {\color{black}and $S^{n}_+$ denotes the cone of positive semidefinite matrices of size $n\times n$}. We write $SDPQAP(A,B)$ if $C=0$. {\color{black}In general, if the dimension of matrices is clear, we write $X\succcurlyeq 0$ instead of $X\in S^n_+$ to denote positive semidefinite matrices.}

    The bound $SDPQAP(A,B,C)$ is expensive to compute, as it involves an SDP with doubly nonnegative matrix variables of order $n^2\times n^2$.
\subsection{Bound comparison}\label{BoundComparisonSubSection}
The three bounds $PB(A,B)$, $QPB(A,B)$ and $SDPQAP(A,B)$ increase in computational complexity, hence we would expect that the bounds do get better accordingly. As it turns out, we can show the expected order of the bound quality.
\begin{thm}\label{SDPBetterProj}
For symmetric matrices $A$ and $B$ we have
  \begin{equation}
    PB(A,B) \leq QPB(A,B) \leq SDPQAP(A,B) \leq QAP(A,B).
  \end{equation}
\end{thm}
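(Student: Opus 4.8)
The plan is to prove the chain in three parts. The two outer inequalities are immediate. The inequality $PB(A,B)\le QPB(A,B)$ is exactly the content of Proposition \ref{QPBBound}. For $SDPQAP(A,B)\le QAP(A,B)$, note that every permutation matrix $X\in\Pi_n$ gives a feasible point $Y=\mathrm{vec}(X)\mathrm{vec}(X)^T$ of \eqref{QAPSDP}: the gangster, row/column-sum and normalization constraints all hold for a $0/1$ permutation, and $\langle B\otimes A,\mathrm{vec}(X)\mathrm{vec}(X)^T\rangle=\langle A,XBX^T\rangle$, so minimizing over the larger feasible set can only decrease the value. All the work is in the middle inequality $QPB(A,B)\le SDPQAP(A,B)$.

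For the middle inequality I would show that every feasible $Z$ of \eqref{QAPSDP} produces a feasible point of the convex quadratic program of Proposition \ref{QPBBound} whose objective is at most $\langle B\otimes A,Z\rangle$; minimizing over $Z$ then gives the claim. First I extract a first-moment matrix by reshaping the diagonal, $\mathrm{vec}(X)=\mathrm{diag}(Z)$. The constraints $\langle I_n\otimes E_{jj},Z\rangle=1$ and $\langle E_{jj}\otimes I_n,Z\rangle=1$ force unit row and column sums, and $Z\ge 0$ gives $X\ge 0$, so $X$ is doubly stochastic; setting $Y:=V^TXV$ and $y:=\mathrm{vec}(Y)$ yields a feasible CQP point, and $(V\otimes V)^T\mathrm{vec}(X)=y$ since $V^Te=0$. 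The crucial structural input is the ordering $Z\succcurlyeq\mathrm{vec}(X)\mathrm{vec}(X)^T$, which I obtain from the equivalent bordered Zhao--Karisch--Rendl--Wolkowicz formulation (equivalent to \eqref{QAPSDP} by \cite{povh2009copositive}), where the matrix with corner $1$, border $\mathrm{vec}(X)$ and body $Z$ is positive semidefinite with $\mathrm{vec}(X)=\mathrm{diag}(Z)$; the Schur complement gives $N:=Z-\mathrm{vec}(X)\mathrm{vec}(X)^T\succcurlyeq 0$. Writing $W:=(V\otimes V)^TZ(V\otimes V)$, compression yields $W-yy^T=(V\otimes V)^TN(V\otimes V)\succcurlyeq 0$, hence $\langle\hat Q,W-yy^T\rangle\ge 0$ because $\hat Q\succcurlyeq 0$.

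It remains to connect objectives. Using $\mathrm{vec}(X)=\tfrac1n(e\otimes e)+(V\otimes V)y$ one has the purely algebraic identity $\mathrm{vec}(X)^T(B\otimes A)\mathrm{vec}(X)=y^T(\tilde B\otimes\tilde A)y+\tfrac2n\mathrm{tr}(BJ_nAX)-\tfrac{(e^TAe)(e^TBe)}{n^2}$. Substituting $\tilde B\otimes\tilde A=\hat Q+I_{n-1}\otimes S^*+T^*\otimes I_{n-1}$ and $\mathrm{tr}(S^*+T^*)=\langle\lambda_{\tilde A},\mu_{\tilde B}\rangle^-$, the desired bound $\langle B\otimes A,Z\rangle\ge QPB(A,B)$ reduces to
\[
  \langle B\otimes A,\,N\rangle\ \ge\ \mathrm{tr}(S^*+T^*)-y^T\bigl(I_{n-1}\otimes S^*+T^*\otimes I_{n-1}\bigr)y .
\]
I would prove this by decomposing $B\otimes A$ through the projector $P:=I_n-\tfrac1n ee^T$, so that its projected part reproduces $\langle\tilde B\otimes\tilde A,W-yy^T\rangle$ and hence, by the previous paragraph, dominates $\langle I_{n-1}\otimes S^*+T^*\otimes I_{n-1},W-yy^T\rangle$, while the remaining $e$-direction cross terms are evaluated against $N$ using the gangster constraint $\langle I_n\otimes(J_n-I_n)+(J_n-I_n)\otimes I_n,Z\rangle=0$ together with $N(e\otimes e)=0$ (the latter from $\langle J_{n^2},Z\rangle=n^2$ and $N\succcurlyeq 0$).

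The main obstacle I anticipate is exactly this last bookkeeping: controlling the cross terms that couple the $e$-directions with their orthogonal complement, and verifying that they combine with $\langle I_{n-1}\otimes S^*+T^*\otimes I_{n-1},W\rangle\ge\mathrm{tr}(S^*+T^*)$ so that the displayed inequality closes. Everything else is either cited or a direct verification; the substance is showing that the second-moment information in $Z$, once compressed to $e^{\perp}$, dominates the rank-one term $yy^T$ in the $\hat Q\succcurlyeq 0$ direction, while the first-moment and constant contributions match the CQP objective exactly.
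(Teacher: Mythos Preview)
Your approach is essentially the one the paper takes: starting from a feasible $Z$ of \eqref{QAPSDP}, pass to the compressed second moment $W=(V\otimes V)^TZ(V\otimes V)$ and first moment $y=(V\otimes V)^T\mathrm{diag}(Z)$, use $W-yy^T\succcurlyeq 0$ together with $\hat Q\succcurlyeq 0$, and then match the remaining linear and constant pieces. The paper merely packages this more transparently by introducing an explicit intermediate SDP bound $SDPPB(A,B)$ in the variables $(U,u)$ (your $(W,y)$) and proving the two inequalities $QPB\le SDPPB$ and $SDPPB\le SDPQAP$ separately.

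Where you hesitate is precisely where the paper's organization pays off. The ``bookkeeping'' you flag is not an inequality but a collection of \emph{exact identities}. Using the Povh--Rendl characterization of the feasible set of \eqref{QAPSDP} (their Lemma~6, recorded here as Lemma~\ref{PohvRendlList}), the paper verifies directly that
\[
\langle I_{n-1}\otimes E_{ij},\,W\rangle=\delta_{ij},\qquad \langle E_{ij}\otimes I_{n-1},\,W\rangle=\delta_{ij},
\]
whence $\langle I_{n-1}\otimes S^*+T^*\otimes I_{n-1},\,W\rangle=\mathrm{tr}(S^*+T^*)$ \emph{with equality}; your ``$\ge$'' would not be enough on its own since $S^*,T^*$ carry no sign information. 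Likewise, the $e$-direction cross terms do not merely stay bounded but cancel exactly: the paper computes $\langle\tilde B\otimes\tilde A,\,W\rangle$ and $\tfrac{2}{n}\mathrm{vec}(V^TBJAV)^Ty$ in closed form and shows their sum plus the constant equals $\langle B\otimes A,\,Z\rangle$ on the nose, again via properties (i)--(iv) of Lemma~\ref{PohvRendlList}. Once you invoke that lemma, your anticipated obstacle disappears and your argument closes; without it, controlling those terms by $N(e\otimes e)=0$ and the gangster constraint alone is awkward. So the substantive missing ingredient in your outline is the appeal to the Povh--Rendl block description of feasible $Z$, which turns the delicate-looking cross-term analysis into a short computation.
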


{\color{black} Proving this bound is quite technical. The outermost inequalities are known, hence we only need to show the inequality in the middle. Instead of doing this directly, we first introduce another SDP-based bound, which lies in between the bounds $QPB$ and $SDPQAP$. This bound, which we will call $SDPPB$, is based on the same projection used for $PB$ and $QPB$.}

 Again we start with the observation that for every doubly stochastic {\color{black}$n\times n$-matrix} $X$ we can always find an {\color{black}$n-1\times n-1$-matrix} $Y$ with
  \begin{equation}
    X=\frac{1}{n}ee^T+VYV^T,
  \end{equation}
  where $V{\color{black}=(v_1\vert\ldots\vert v_{n-1})}$ is the $n\times (n-1)$ matrix of which the columns form an orthonormal basis of the orthogonal complement of the all one vector $e$. Set $y=\mathrm{vec}(Y)$, the vector we obtain by gluing the columns of $Y$ together. The idea of this bound is now to relax $U = yy^T$ to be a {\color{black}positive} semidefinite matrix with certain constraints, {\color{black}and to relax $y=\mathrm{vec}(Y)$ to a variable $u$. Since $yy^T-yy^T=0\succcurlyeq 0$, we add the constraint that $U-uu^T\succcurlyeq 0$.}

   We can rewrite the objective function of the QAP at $X=\frac{1}{n}ee^T+VYV^T$ as:
     \begin{align}
    \mathrm{tr}(AXBX^T) &= \mathrm{tr}\left(A\left(\frac{1}{n}ee^T + VYV^T\right)B\left(\frac{1}{n}ee^T + VYV^T\right)^T\right) \\
     &= \mathrm{tr}\left(\frac{1}{n^2}Aee^TBee^T\right) + \mathrm{tr}\left(\frac{1}{n}Aee^TBVY^TV^T\right)\\
     &\quad+ \mathrm{tr}\left(\frac{1}{n}AVYV^TBee^T\right)+\mathrm{tr}\left(\frac{1}{n}AVYV^TBVY^TV^T\right)\\
     &=\mathrm{tr}\left(\tilde{A}Y\tilde{B}Y^T\right) + \frac{2}{n}\mathrm{tr}\left(BJ_nAVYV^T\right) + \frac{1}{n^2}(e^TAe)(e^TBe)\\
    &= \langle \tilde{B}\otimes\tilde{A}, yy^T\rangle + \frac{2}{n}\mathrm{vec}\left(V^TBJ_nAV\right)^Ty + \frac{1}{n^2}(e^TAe)(e^TBe).
  \end{align}
  Thus we can write it as linear function of $yy^T$ and $y$, which we relax to $U$ and $u$ respectively. To make this bound at least as good as the convex quadratic bound $QPB(A,B)$, we have to add more conditions, which follow from $U=yy^T=\mathrm{vec}(V^TXV)\mathrm{vec}(V^TXV)^T$. We have, for all $1\leq i,j\leq n-1$, that
  \begin{align}
    \langle I\otimes E_{ij},U\rangle &= \mathrm{tr}\left(E_{ij}V^TXV I V^TX^TV\right)\\
    &= \mathrm{tr}\left(v_jv_i^T X \left(I-\frac{1}{n}J\right)X^T\right)\\
    & = \mathrm{tr}\left(v_jv_i^T\right) - \frac{1}{n}\mathrm{tr}\left(v_jv_i^TJ\right)\\
    &= \mathrm{tr}\left(v_i^Tv_j\right) - \frac{1}{n}\mathrm{tr}\left(e^Tv_jv_i^Te\right)= \delta_{ij},
  \end{align}
  and analogously we can show that $\langle E_{ij}\otimes I,U\rangle = \delta_{ij}$ as well. {\color{black}Here $\delta_{ij}$ denotes the Kronecker-Delta, which is one if $i=j$, and zero otherwise.} Finally, the property that $X=\frac{1}{n}ee^T+VYV^T$ is nonnegative is equivalent to $(V\otimes V)y\geq -\frac{1}{n}e\otimes e$.
  {\color{black}
  \begin{prop} With $\tilde{A},\tilde{B}$ and $V$ as defined in Proposition \ref{PBBound}, we obtain a bound for $QAP(A,B)$ by:
    \color{black}
    \begin{align}
    SDPPB(A,B) \coloneqq \min \enspace&\langle \tilde{B}\otimes \tilde{A}, U\rangle  + \frac{2}{n}\mathrm{vec}(V^TBJAV)^T u+\frac{1}{n^2}(e^TAe)(e^TBe)\label{SDPPB}\\
    \text{s.t. } & \begin{pmatrix}
                     1 & u^T \\
                     u & U
                   \end{pmatrix}\succcurlyeq 0,\nonumber\\
    & \langle E_{ij}\otimes I_{n-1}, U\rangle = \delta_{ij} \quad\forall i,j=1,\ldots,n-1,\nonumber\\
    & \langle I_{n-1}\otimes E_{ij}, U\rangle = \delta_{ij} \quad\forall i,j=1,\ldots,n-1,\nonumber\\
    & (V\otimes V) u \geq -\frac{1}{n}e\otimes e.\nonumber
  \end{align}
  \end{prop}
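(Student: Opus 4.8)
The plan is to read the stated ``bound'' as the inequality $SDPPB(A,B)\leq QAP(A,B)$ and to prove it by a standard relaxation argument: I would exhibit, for an optimal permutation, one feasible point of the semidefinite program whose objective value equals $QAP(A,B)$. Since $SDPPB(A,B)$ is a minimization over a feasible set that then contains this point, the inequality follows immediately. Almost all of the analytic content needed for this has in fact already been assembled in the discussion preceding the proposition, so the proof is chiefly a matter of packaging it.

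Concretely, let $\varphi^*\in S_n$ attain $QAP(A,B)$ and let $X^*\in\Pi_n$ be the corresponding permutation matrix, so that $\mathrm{tr}(AX^*B(X^*)^T)=QAP(A,B)$ (recall $C=0$ and that $A$ is symmetric). Every permutation matrix is doubly stochastic, so by the Hadley--Rendl--Wolkowicz parametrization recalled above we may write $X^*=\frac{1}{n}ee^T+VY^*V^T$ with $Y^*=V^TX^*V$, and set $y^*=\mathrm{vec}(Y^*)$. The candidate SDP solution is then $u\coloneqq y^*$ and $U\coloneqq y^*(y^*)^T$.

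I would then verify the three groups of constraints. The positive semidefiniteness constraint holds because
\[
\begin{pmatrix} 1 & u^T\\ u & U\end{pmatrix}=\begin{pmatrix} 1\\ y^*\end{pmatrix}\begin{pmatrix} 1 & (y^*)^T\end{pmatrix}
\]
is a rank-one outer product, hence $\succcurlyeq 0$. The two families of linear equalities $\langle E_{ij}\otimes I_{n-1},U\rangle=\delta_{ij}$ and $\langle I_{n-1}\otimes E_{ij},U\rangle=\delta_{ij}$ are precisely the identities already derived above for $U=yy^T$, using $V^TV=I_{n-1}$, $VV^T=I_n-\frac1n ee^T$ and $X^*e=(X^*)^Te=e$; they apply verbatim to $y^*$. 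Finally, the nonnegativity of $X^*$ translates, through the established equivalence $X=\frac1n ee^T+VYV^T\geq 0 \iff (V\otimes V)y\geq -\frac1n e\otimes e$, into $(V\otimes V)u\geq -\frac1n e\otimes e$. Thus $(U,u)$ is feasible, and substituting $U=y^*(y^*)^T$, $u=y^*$ into the objective reproduces exactly $\langle\tilde B\otimes\tilde A,y^*(y^*)^T\rangle+\frac2n\mathrm{vec}(V^TBJ_nAV)^Ty^*+\frac{1}{n^2}(e^TAe)(e^TBe)$, which by the chain of equalities computed above for $\mathrm{tr}(AXBX^T)$ equals $\mathrm{tr}(AX^*B(X^*)^T)=QAP(A,B)$. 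Hence $SDPPB(A,B)\leq QAP(A,B)$.

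As for the main obstacle: there is essentially none left at the level of the proposition itself, because the genuinely nontrivial computations -- the reformulation of the quadratic objective as a linear function of $yy^T$ and $y$, the evaluation of the Kronecker inner products, and the nonnegativity reformulation -- are carried out just before the statement. The proof therefore reduces to a routine check that the single rank-one point built from an optimal permutation lies in the feasible region and attains the QAP value. The only point demanding care is that \emph{all} constraint families must hold simultaneously for the same $y^*$; this is automatic here, since each was obtained from the single identity $U=yy^T$ with $y=\mathrm{vec}(V^TXV)$ specialized to $X=X^*$.
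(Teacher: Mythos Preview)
Your proposal is correct and follows essentially the same relaxation argument the paper lays out in the discussion immediately preceding the proposition: exhibit the rank-one point $(U,u)=(y^*(y^*)^T,y^*)$ coming from an optimal permutation, and invoke the already-computed identities to check feasibility and equality of objective values. One tiny remark: when you cite the properties used for the equality constraints, note that the paper's derivation also uses $X^*(X^*)^T=I$ (not just $X^*e=(X^*)^Te=e$), which of course holds since $X^*$ is a permutation matrix.
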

  }

{\color{black}
\begin{lem}\label{SDPPB_better_than_QPB}
  $$SDPPB(A,B) \geq QPB(A,B).$$
\end{lem}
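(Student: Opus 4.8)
The plan is to take an arbitrary feasible point $(U,u)$ of the program defining $SDPPB(A,B)$ and exhibit from it a feasible point of the program defining $QPB(A,B)$ whose objective value is no larger; minimizing then gives the inequality. Let $Y$ be the $(n-1)\times(n-1)$ matrix with $\mathrm{vec}(Y)=u$ and set $X\coloneqq\frac1n ee^T+VYV^T$. First I would verify that $X$ is feasible for $QPB$: since $V^Te=0$ one has $Xe=X^Te=e$, and the constraint $(V\otimes V)u\geq-\frac1n e\otimes e$ is exactly $\mathrm{vec}(X)\geq0$, i.e.\ $X\geq0$, as already noted before the proposition. Hence $X$ is doubly stochastic and $(X,Y,y=u)$ is feasible for $QPB(A,B)$.

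Write $g(U,u)$ for the objective of $SDPPB$ and $h(u)$ for the objective of $QPB$ evaluated at this point (that is, at $y=u$). Using the expansion of $\mathrm{tr}(AXBX^T)$ carried out before the proposition, the affine-in-$u$ part of $h$, namely $\frac2n\mathrm{tr}(BJ_nAX)-\frac{(e^TAe)(e^TBe)}{n^2}$, equals $\frac2n\mathrm{vec}(V^TBJ_nAV)^Tu+\frac1{n^2}(e^TAe)(e^TBe)$, which is precisely the affine part of $g$. Consequently these terms cancel in the difference and
\[
g(U,u)-h(u)=\langle\tilde B\otimes\tilde A,U\rangle-u^T\hat Q u-\langle\lambda_{\tilde A},\mu_{\tilde B}\rangle^-.
\]

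The heart of the argument is to rewrite this as a manifestly nonnegative quantity. I would substitute the defining identity $\tilde B\otimes\tilde A=\hat Q+I_{n-1}\otimes S^*+T^*\otimes I_{n-1}$ into the first term and write $u^T\hat Q u=\langle\hat Q,uu^T\rangle$, together with $\mathrm{tr}(S^*+T^*)=\langle\lambda_{\tilde A},\mu_{\tilde B}\rangle^-$, obtaining
\[
g(U,u)-h(u)=\langle\hat Q,U-uu^T\rangle+\big(\langle I_{n-1}\otimes S^*,U\rangle-\mathrm{tr}(S^*)\big)+\big(\langle T^*\otimes I_{n-1},U\rangle-\mathrm{tr}(T^*)\big).
\]
The two parenthesized terms vanish by the marginal constraints of $SDPPB$: expanding $S^*=\sum_{i,j}S^*_{ij}E_{ij}$ gives $\langle I_{n-1}\otimes S^*,U\rangle=\sum_{i,j}S^*_{ij}\langle I_{n-1}\otimes E_{ij},U\rangle=\sum_{i,j}S^*_{ij}\delta_{ij}=\mathrm{tr}(S^*)$, and symmetrically $\langle T^*\otimes I_{n-1},U\rangle=\mathrm{tr}(T^*)$ from the constraints $\langle E_{ij}\otimes I_{n-1},U\rangle=\delta_{ij}$. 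What remains is $g(U,u)-h(u)=\langle\hat Q,U-uu^T\rangle$, which is nonnegative: $\hat Q\succcurlyeq0$ by construction, while $U-uu^T\succcurlyeq0$ is the Schur complement of the block constraint $\left(\begin{smallmatrix}1&u^T\\u&U\end{smallmatrix}\right)\succcurlyeq0$, and the trace inner product of two positive semidefinite matrices is nonnegative.

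Putting this together, for every feasible $(U,u)$ we have $g(U,u)\geq h(u)\geq QPB(A,B)$, the last step because $(X,Y,y=u)$ is feasible for $QPB$; minimizing the left-hand side over $(U,u)$ yields $SDPPB(A,B)\geq QPB(A,B)$. I expect the only genuinely fiddly step to be the bookkeeping verifying that the affine parts of $g$ and $h$ coincide, which is essentially the cross-term expansion already recorded above the proposition; the conceptual core is the substitution of $\hat Q$ combined with the two marginal constraints, which is exactly what makes the $S^*,T^*$ correction collapse onto $\mathrm{tr}(S^*+T^*)=\langle\lambda_{\tilde A},\mu_{\tilde B}\rangle^-$.
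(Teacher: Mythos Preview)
Your proof is correct and follows essentially the same approach as the paper's: construct $X$ from $u$, verify feasibility via $(V\otimes V)u\geq -\tfrac1n e$ and $V^Te=0$, substitute $\tilde B\otimes\tilde A=\hat Q+I\otimes S^*+T^*\otimes I$, use the marginal constraints $\langle I\otimes E_{ij},U\rangle=\langle E_{ij}\otimes I,U\rangle=\delta_{ij}$ to collapse the $S^*,T^*$ terms to $\mathrm{tr}(S^*+T^*)=\langle\lambda_{\tilde A},\mu_{\tilde B}\rangle^-$, and finish with the Schur-complement inequality $\langle\hat Q,U-uu^T\rangle\geq 0$. Your organization around the difference $g(U,u)-h(u)$ is slightly cleaner than the paper's, but the substance is identical.
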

\begin{proof}
\color{black}
    Let $(U,u)$ be an optimal solution of $SDPPB(A,B)$. Then we construct a feasible solution for $QPB(A,B)$ by setting $y=u$, since $(V\otimes V)u + \frac{1}{n}ee^T\geq 0$, and thus $X=\frac{1}{n}ee^T+VYV^T\geq 0$ for $\mathrm{vec}(Y)=y$.
     The matrix $X$ is doubly stochastic, since $(e\otimes e_i)^T(V\otimes V)u = (e^TV\otimes e_i^TV)u = 0$ and $(e_i\otimes e)^T(V\otimes V)u =0$, so adding $VYV^T$ to the doubly stochastic matrix $\frac{1}{n}ee^T$ results in another doubly stochastic matrix.

  {\color{black}In the following $\hat{Q}$ is as defined in Proposition \ref{QPBBound}.} By the Schur complement theorem we know that $U-uu^T\succcurlyeq 0$, hence we have that
  \begin{align}
    u^T\hat{Q}u & \leq \langle \hat{Q},U\rangle \\
     &= \langle \tilde{B}\otimes \tilde{A}, U\rangle - \langle I\otimes S^*, U\rangle-\langle T^*\otimes I, U\rangle\\
     &= \langle \tilde{B}\otimes \tilde{A}, U\rangle - \sum_{i,j=1}^{n-1} S^*_{ij} \langle I\otimes E_{ij},U\rangle-\sum_{i,j=1}^{n-1} T^*_{ij} \langle E_{ij}\otimes I,U\rangle\\
     &= \langle \tilde{B}\otimes \tilde{A}, U\rangle - \mathrm{tr}(S^*) - \mathrm{tr}(T^*)\\
     &= \langle \tilde{B}\otimes \tilde{A}, U\rangle - \langle \lambda_{\tilde{A}},\mu_{\tilde{B}}\rangle^-.
  \end{align}
  Thus we can compare the two bounds by
  \begin{align}
    QPB(A,B)&\leq y^T\hat{Q}y+\langle \lambda_{\tilde{A}},\mu_{\tilde{B}}\rangle +\frac{2}{n}\mathrm{tr}(BJAX)-\frac{1}{n^2}(e^TAe)(e^TBe)  \\
     & \leq \langle \tilde{B}\otimes \tilde{A}, U\rangle+\frac{2}{n}\mathrm{tr}(BJAX)-\frac{1}{n^2}(e^TAe)(e^TBe)\\
     & = \langle \tilde{B}\otimes \tilde{A}, U\rangle  + \frac{2}{n}\mathrm{vec}(V^TBJAV)^T u+\frac{1}{n^2}(e^TAe)(e^TBe)\\
     &= SDPPB(A,B).
  \end{align}
\end{proof}

}

{\color{black} To prove the other inequality, we make use of a lemma of Povh and Rendl \cite{povh2009copositive}. They give an alternative description of the feasible set of $QAPSDP$ in terms of blocks of $Y$. For this we split the $n\times n$-matrix-variable $Y$ of \eqref{QAPSDP} into $n^2$ blocks of size $n\times n$, which we call $Y^{(ij)}$. We write $Y=\left[Y^{(ij)}\right]_{1\leq i,j\leq n}$, and use similar notation for other block-matrices.

\begin{lem}[Lemma 6 in \cite{povh2009copositive}]\label{PohvRendlList}
\color{black}
  An $Y=\left[Y^{(ij)}\right]_{1\leq i,j\leq n}\in S^{n^2}_+, Y\geq 0$ is feasible for \eqref{QAPSDP} if and only if
  \begin{enumerate}[$(i)$]
    \item\label{itm:first} $\langle I_n\otimes (J_n-I_n)+(J_n-I_n)\otimes I_n,Y\rangle =0$,
    \item\label{itm:second} $\mathrm{tr}\left(Y^{(ii)}\right)=1\quad\forall i,\qquad \sum_{i=1}^{n}\mathrm{diag}\left(Y^{(ii)}\right)=e$,
    \item\label{itm:third} $Y^{(ij)}e=\mathrm{diag}\left(Y^{(jj)}\right)\quad\forall i,j$,
    \item\label{itm:fourth} $\sum_{i=1}^{n}Y^{(ij)}=e\enspace\mathrm{diag}\left(Y^{(jj)}\right)^T$.
  \end{enumerate}
\end{lem}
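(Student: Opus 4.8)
The plan is to prove equality of the two feasible sets by translating each trace constraint of \eqref{QAPSDP} into the block variables via the identity $\langle P\otimes Q,Y\rangle=\sum_{i,j=1}^n P_{ij}\langle Q,Y^{(ij)}\rangle$. This instantly matches the constraint $\langle I_n\otimes(J_n-I_n)+(J_n-I_n)\otimes I_n,Y\rangle=0$ with $(\ref{itm:first})$; the constraints $\langle E_{jj}\otimes I_n,Y\rangle=1$ and $\langle I_n\otimes E_{jj},Y\rangle=1$ become $\mathrm{tr}(Y^{(jj)})=1$ and $\sum_i(Y^{(ii)})_{jj}=1$, which together are exactly $(\ref{itm:second})$; and $\langle J_{n^2},Y\rangle=n^2$ becomes $\sum_{i,j}e^TY^{(ij)}e=n^2$. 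I would also record at the outset the structural consequence of $(\ref{itm:first})$: because $Y\geq 0$, a vanishing sum of nonnegative entries forces every diagonal block $Y^{(ii)}$ to be diagonal and every off-diagonal block $Y^{(ij)}$, $i\neq j$, to have zero diagonal.

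For the implication $(\ref{itm:first})$--$(\ref{itm:fourth})\Rightarrow$ feasibility, only the constraint $\langle J_{n^2},Y\rangle=n^2$ still needs an argument, the remaining ones having just been matched. Summing $(\ref{itm:fourth})$ over $j$ and using $\sum_j\mathrm{diag}(Y^{(jj)})=e$ from $(\ref{itm:second})$ gives $\sum_{i,j}Y^{(ij)}=ee^T$, whence $\langle J_{n^2},Y\rangle=e^T(ee^T)e=n^2$. This direction is routine.

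The substantive direction is feasibility $\Rightarrow(\ref{itm:third}),(\ref{itm:fourth})$, and here I would argue through the equality case of the triangle inequality for the positive semidefinite seminorm $\lVert x\rVert_Y:=\sqrt{x^TYx}$. Introduce the block-indicator vectors $u_j:=e_j\otimes e$ and $w_l:=e\otimes e_l$, so that $\mathbf{1}:=e\otimes e=\sum_{j}u_j=\sum_l w_l$. Using the diagonal shape of the diagonal blocks and the vanishing diagonals of the off-diagonal blocks established above, the relation $\mathrm{tr}(Y^{(jj)})=1$ yields $u_j^TYu_j=1$, the relation $\sum_i(Y^{(ii)})_{ll}=1$ yields $w_l^TYw_l=1$, and $\langle J_{n^2},Y\rangle=n^2$ yields $\lVert\mathbf{1}\rVert_Y^2=n^2$. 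Consequently $\lVert\sum_j u_j\rVert_Y\leq\sum_j\lVert u_j\rVert_Y=n=\lVert\mathbf{1}\rVert_Y$ holds with equality; since the unit vectors $Y^{1/2}u_j$ have a sum whose norm equals the sum of their norms, they must all coincide, so $Yu_j$ is independent of $j$ and equals $\tfrac1nY\mathbf{1}$. The identical argument applied to the $w_l$ shows $Yw_l$ is independent of $l$.

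It then remains to read off the two conclusions from these block identities. Block $i$ of $Yu_j$ equals $Y^{(ij)}e$; taking $j=i$ and using that $Y^{(ii)}$ is diagonal gives $Y^{(ii)}e=\mathrm{diag}(Y^{(ii)})$, so $\tfrac1nY\mathbf{1}$ has block $i$ equal to $\mathrm{diag}(Y^{(ii)})$, and the independence of $j$ then gives $(\ref{itm:third})$. Likewise block $i$ of $Yw_l$ equals $(\sum_j Y^{(ij)})e_l$; its independence of $l$ forces all columns of $\sum_j Y^{(ij)}$ to be equal, and comparing with the value just computed for $\tfrac1nY\mathbf{1}$ together with the symmetry $Y^{(ij)}=(Y^{(ji)})^T$ yields $(\ref{itm:fourth})$. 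I expect the equality-case step to be the crux: the real content is that it is precisely the single global constraint $\langle J_{n^2},Y\rangle=n^2$ which, against positive semidefiniteness, upgrades the aggregate trace conditions $(\ref{itm:second})$ into the per-block marginal identities $(\ref{itm:third})$ and $(\ref{itm:fourth})$; the remaining care is purely in the Kronecker/block bookkeeping and the vectorization convention.
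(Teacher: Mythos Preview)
The paper does not supply its own proof of this lemma; it is simply quoted as Lemma~6 of Povh--Rendl \cite{povh2009copositive}. So there is no in-paper argument to compare against, and your proposal stands on its own merits.

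Your argument is correct and rather elegant. The reduction of the easy direction and of conditions $(\ref{itm:first})$--$(\ref{itm:second})$ to the Kronecker bookkeeping is exactly right, as is the structural observation that $(\ref{itm:first})$ together with $Y\ge 0$ forces the diagonal blocks $Y^{(ii)}$ to be diagonal and the off-diagonal blocks to have zero diagonal. The heart of the matter---deriving $(\ref{itm:third})$ and $(\ref{itm:fourth})$ from feasibility---is handled cleanly by your equality-case argument: writing $v_j=Y^{1/2}u_j$, the identities $\lVert v_j\rVert=1$ and $\lVert\sum_j v_j\rVert=n$ force $\langle v_i,v_j\rangle=1$ for all $i,j$, hence all $v_j$ coincide and $Yu_j$ is independent of $j$; the same for the $w_l$. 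The remaining block reading is routine, and the passage to $(\ref{itm:fourth})$ via the symmetry $Y^{(ij)}=(Y^{(ji)})^T$ is correct.

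One point worth flagging: what your argument actually yields for $(\ref{itm:third})$ is $Y^{(ij)}e=\mathrm{diag}(Y^{(ii)})$, with the index of the right-hand side matching the \emph{row} block index~$i$, not~$j$. A direct check on $Y=\mathrm{vec}(X)\mathrm{vec}(X)^T$ for a permutation matrix $X$ confirms this version (for instance $n=2$, $X=I_2$ gives $Y^{(12)}e=e_1=\mathrm{diag}(Y^{(11)})\neq\mathrm{diag}(Y^{(22)})$). The form stated in the paper with $\mathrm{diag}(Y^{(jj)})$ appears to be a typographical slip; the transposed identity $e^TY^{(ij)}=\mathrm{diag}(Y^{(jj)})^T$ does hold, and either version suffices for the only use made of $(\ref{itm:third})$ later in the paper, namely $\mathrm{tr}(Y^{(ij)}J)=1$. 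You may want to note this discrepancy explicitly rather than silently identify your conclusion with the printed $(\ref{itm:third})$.
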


}

{\color{black}
\begin{lem}\label{SDPPB_worse_than_SDPQAP}
  $$SDPPB(A,B) \leq SDPQAP(A,B).$$
\end{lem}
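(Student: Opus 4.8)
The plan is to turn an arbitrary feasible point $Y$ of $SDPQAP(A,B)$ into a feasible point $(U,u)$ of $SDPPB(A,B)$ with the \emph{same} objective value; as both are minimization problems, this yields $SDPPB(A,B)\le SDPQAP(A,B)$. The construction I would use is a projection. Writing $x\coloneqq\mathrm{diag}(Y)$ for the vector built from the diagonals of the diagonal blocks $Y^{(ii)}$, I set
\[
U\coloneqq (V\otimes V)^T Y (V\otimes V),\qquad u\coloneqq (V\otimes V)^T x .
\]
By part \eqref{itm:second} of Lemma \ref{PohvRendlList} the matrix $\bar X$ with $\mathrm{vec}(\bar X)=x$ is doubly stochastic, so $u=\mathrm{vec}(V^T\bar XV)$; one sanity check is that when $Y=\mathrm{vec}(X)\mathrm{vec}(X)^T$ for a permutation $X$ this recovers exactly the rank-one point $(yy^T,y)$ used to derive \eqref{SDPPB}.

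First I would verify the two families of linear constraints. Writing $\langle M\otimes(I_n-\tfrac1n J_n),Y\rangle=\sum_{a,b}M_{ab}\big(\mathrm{tr}\,Y^{(ab)}-\tfrac1n e^TY^{(ab)}e\big)$ and evaluating the two bracketed quantities by \eqref{itm:first}, \eqref{itm:second} and \eqref{itm:third} shows that they assemble into $I_n$ and $J_n$ respectively; hence $\langle (v_iv_j^T)\otimes(I_n-\tfrac1n J_n),Y\rangle=v_i^T(I_n-\tfrac1n J_n)v_j=\delta_{ij}$, which is precisely $\langle E_{ij}\otimes I_{n-1},U\rangle=\delta_{ij}$, and symmetrically $\langle I_{n-1}\otimes E_{ij},U\rangle=\delta_{ij}$. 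For the sign constraint, $(V\otimes V)u=(VV^T\otimes VV^T)x=\mathrm{vec}\!\big((I_n-\tfrac1n J_n)\bar X(I_n-\tfrac1n J_n)\big)=x-\tfrac1n(e\otimes e)\ge -\tfrac1n(e\otimes e)$, since $Y\ge 0$ forces $x=\mathrm{diag}(Y)\ge 0$. The objective identity is just the decomposition of $\mathrm{tr}(AXBX^T)$ computed before \eqref{SDPPB}, read off lift by lift: the purely quadratic part becomes $\langle\tilde B\otimes\tilde A,U\rangle$, the constant is unchanged, and the cross terms are linear in $X$ and so only see the first moment $x$, reproducing $\tfrac2n\mathrm{vec}(V^TBJ_nAV)^Tu$; closing this bookkeeping uses \eqref{itm:third} and \eqref{itm:fourth}.

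The real obstacle is the semidefinite constraint $\left(\begin{smallmatrix}1&u^T\\u&U\end{smallmatrix}\right)\succcurlyeq 0$. Since this block matrix is the congruence of the \emph{arrow matrix} $\left(\begin{smallmatrix}1&x^T\\x&Y\end{smallmatrix}\right)$ by $\mathrm{diag}(1,\,V\otimes V)$, it suffices to prove $\left(\begin{smallmatrix}1&x^T\\x&Y\end{smallmatrix}\right)\succcurlyeq 0$ with $x=\mathrm{diag}(Y)$. This does \emph{not} follow from $Y\succcurlyeq 0$ alone: it is exactly the statement that the reduced feasible point $Y$ lifts to the augmented Zhao–Karisch–Rendl–Wolkowicz variable, whose defining arrow constraint identifies its border with $\mathrm{diag}(Y)$. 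I would therefore either invoke this correspondence from \cite{povh2009copositive} directly, or prove the arrow matrix positive semidefinite from Lemma \ref{PohvRendlList} through a generalized Schur complement, i.e. by exhibiting a vector $v$ with $Yv=x$ and $x^Tv\le 1$ (equivalently $x\in\mathrm{range}\,Y$ and $x^TY^+x\le 1$). Producing such a $v$ from the block identities \eqref{itm:third}–\eqref{itm:fourth} together with $Y(e\otimes e)=e\otimes e$ is the one genuinely delicate point, and is where I expect to spend most of the effort.
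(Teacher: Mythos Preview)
Your construction and verification are essentially identical to the paper's: define $U=(V\otimes V)^T Y(V\otimes V)$ and $u=(V\otimes V)^T\mathrm{diag}(Y)$, check the trace constraints via Lemma~\ref{PohvRendlList}, check the sign constraint from $\mathrm{diag}(Y)\ge 0$, and match the objectives by expanding through the projector $(I-\tfrac1nJ)\otimes(I-\tfrac1nJ)$. On the one point you flag as delicate---the positive semidefiniteness of the arrow matrix---the paper takes exactly your first option: it simply invokes that one may append $Y-yy^T\succcurlyeq 0$, $y=\mathrm{diag}(Y)$, to \eqref{QAPSDP} without changing its optimum (this is the equivalence of the Povh--Rendl and Zhao--Karisch--Rendl--Wolkowicz relaxations), so there is no need to manufacture a witness $v$ via the generalized Schur complement.
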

\begin{proof}
\color{black}
 With the properties of Lemma \ref{PohvRendlList} we can show that we get a feasible solution for $SDPPB(A,B)$ from a feasible solution $Y$ of $SDPQAP(A,B)$ by setting $U = (V^T\otimes V^T) Y (V\times V)$ and $u=(V^T\otimes V^T)y$, which is the transformation to a Slater-feasible variant of $SDPQAP$ (see e.g. the thesis of Uwe Truetsch \cite{ff97cfeaa5da41e083160c27a848c6ec}). Similarly to $Y$, we can split $U$ into $(n-1)^2$ blocks of size $(n-1)\times (n-1)$, which we call $U^{(ij)}$. We get an explicit formula for these blocks in terms of the $Y^{(ij)}$, if we see $V\otimes V$ as $n(n-1)$ blocks of size $n\times (n-1)$, since then all block sizes are compatible with multiplication.
  \begin{align}
    U &= (V^T\otimes V^T) Y (V\times V),\\
      &= (V^T\otimes V^T)\left(\sum_{k=1}^{n}Y^{(ik)}V_{kj}V \right)_{\substack{1\leq i\leq n\\ 1\leq j\leq n-1}},\\
      &= \left(\sum_{l=1}^{n}V_{li}V^T\sum_{k=1}^{n}Y^{(lk)}V_{kj}V \right)_{1\leq i,j\leq n-1},
  \end{align}
  hence
  \begin{equation}
    U^{(ij)} = \sum_{l,k=1}^{n}V_{li}V_{kj}V^TY^{(lk)}V.
  \end{equation}

  We can now use $\eqref{itm:first}$-$\eqref{itm:fourth}$ to derive some properties of $U$. First note that by $\eqref{itm:first}$ and $\eqref{itm:second}$ we know that $\mathrm{tr}(Y^{(ij)})=\delta_{ij}$, and by $\eqref{itm:second}$ and $\eqref{itm:third}$ that $\mathrm{tr}(Y^{(ij)}J)=1$. Hence we see that
  \begin{align}
    \langle E_{ij}\otimes I_{n-1}, U\rangle &= \mathrm{tr}(U^{(ij)}), \\
    &= \mathrm{tr}\left(\sum_{l,k=1}^{n}V_{li}V_{kj}V^TY^{(lk)}V\right),\\
    &= \sum_{l,k=1}^{n}V_{li}V_{kj}\mathrm{tr}\left(Y^{(lk)}\left(I_n-\frac{1}{n}J_n\right)\right),\\
    &= \sum_{l=1}^{n}V_{li}V_{lj} - \frac{1}{n}\sum_{l,k=1}^{n}V_{li}V_{kj},\\
    &= v_i^Tv_j - 0=\delta_{ij}.
  \end{align}
  Similarly we can use $\sum_{i=1}^{n-1}V_{li}V_{ki}=(VV^T)_{lk}=\delta_{lk}-\frac{1}{n}$, $\eqref{itm:first}$, $\eqref{itm:second}$ and $\eqref{itm:fourth}$ to show that
  \begin{align}
    \langle I_{n-1}\otimes E_{ij}, U\rangle &= \left(\sum_{i=1}^{n-1}U^{(ii)}\right)_{ij}, \\
    &= \left(\sum_{i=1}^{n-1}\sum_{l,k=1}^{n}V_{li}V_{ki}V^TY^{(lk)}V\right)_{ij},\\
    &= \left(\sum_{l=1}^{n}V^TY^{(ll)}V - \frac{1}{n}\sum_{l,k=1}^{n}V^TY^{(lk)}V\right)_{ij},\\
    &= \left(V^TV-\frac{1}{n}V^TJV\right)_{ij},\\
    &=\left(I_{n-1}-0\right)_{ij}= \delta_{ij}.
  \end{align}

  To construct a feasible $u$ with the objective value we need, we use that we can add $Y-yy^T\succcurlyeq 0$, $y=\mathrm{diag}(Y)$ to \eqref{QAPSDP} without changing the optimal value of $SDPQAP$. With an optimal solution $(Y,y)$ we thus set $(U,u)=\left((V^T\otimes V^T)Y(V\otimes V), (V^T\otimes V^T)y\right)$. With $\eqref{itm:second}$ we see that
  \begin{align}
    (V\otimes V)u &= (V\otimes V)(V^T\otimes V^T)y,\\
    &= (I-\frac{1}{n}J)\otimes (I-\frac{1}{n}J)y,\\
    &= y - \frac{1}{n}(J\otimes I) - \frac{1}{n}(I\otimes J) + \frac{1}{n^2}Jy,\\
    &= y-\frac{1}{n}e\geq -\frac{1}{n}e.
  \end{align}

  Since $Y$ and $Y-yy^T$ are positive semidefinite, the matrices $(V^T\otimes V^T)Y (V\otimes V)=U$ and $(V^T\otimes V^T)(Y-yy^T)(V\otimes V) = U-uu^T$ are positive semidefinite as well, and thus feasible for $SDPPB(A,B)$. What remains to be seen is that the objective values of the two programs are the same.
  \begingroup
  \allowdisplaybreaks
  \begin{align}
    \langle \tilde{B}\otimes \tilde{A}, U\rangle & = \mathrm{tr}((V\otimes V)(V^T\otimes V^T)(B\otimes A)(V\otimes V)(V^T\otimes V^T)Y),\\
    &= \mathrm{tr}(((I-\frac{1}{n}J)\otimes(I-\frac{1}{n}J))(B\otimes A)((I-\frac{1}{n}J)\otimes(I-\frac{1}{n}J))Y),\\
    &= \mathrm{tr}((B\otimes A)(Y-\frac{1}{n}ey^T)((I-\frac{1}{n}J)\otimes(I-\frac{1}{n}J))),\\
    &= \mathrm{tr}((B\otimes A)(Y-\frac{1}{n}ey^T-\frac{1}{n}ye^T+\frac{1}{n^2}J)),\\
    &= \langle B\otimes A, Y\rangle - \frac{2}{n}e^T(B\otimes A)y + \frac{1}{n^2}(e^TAe)(e^TBe),
  \end{align}
  and
  \begin{align}
    \frac{2}{n}\mathrm{vec}(V^TBJAV)^T u& = \frac{2}{n}\mathrm{vec}(V^TBJAV)^T(V^T\otimes V^T)y,\\
    &= \frac{2}{n}\mathrm{vec}(VV^TBJAVV^T)^Ty,\\
    &= \frac{2}{n}\mathrm{vec}((I-\frac{1}{n}J)BJA(I-\frac{1}{n}J))^Ty,\\
    &= \frac{2}{n}\mathrm{vec}(BJA)^Ty - \frac{2}{n^2}\mathrm{vec}(BJA)^T(I\otimes J)y \\
    &\quad- \frac{2}{n^2}\mathrm{vec}(BJA)^T(J\otimes I)y+\frac{2}{n^3}\mathrm{vec}(BJA)^TJy,\\
    &= \frac{2}{n}\mathrm{vec}(BJA)^Ty - \frac{2}{n^2}(e^TAe)(e^TBe),
  \end{align}
  thus
  \begin{align}
    \langle \tilde{B}\otimes \tilde{A}&, U\rangle  + \frac{2}{n}\mathrm{vec}(V^TBJAV)^T u+\frac{1}{n^2}(e^TAe)(e^TBe)= \langle B\otimes A, Y\rangle.
  \end{align}
  \endgroup
  Here we used properties $\eqref{itm:first}$-$\eqref{itm:fourth}$, that $\mathrm{vec}(ABC)=(C^T\otimes A)\mathrm{vec}(B)$ and that $A$ and $B$ are symmetric.
\end{proof}

}

\begin{proof}[Proof of Theorem \ref{SDPBetterProj}]
  The only inequality we have to show is $QPB(A,B) \leq SDPQAP(A,B)$, since the leftmost inequality was shown in \cite{anstreicher2001new}. By Lemma \ref{SDPPB_better_than_QPB} and Lemma \ref{SDPPB_worse_than_SDPQAP} we have
  \begin{equation*}
    QPB(A,B)\leq SDPPB(A,B)\leq SDPQAP(A,B)
  \end{equation*}
  and the theorem follows.
\end{proof}

\begin{rem}
While it was expected that $SDPQAP(A,B)$, which has $\mathcal{O}(n^4)$ linear inequality constraints, is better than the projected eigenvalue bound $PB(A,B)$, it was less so for the bound $SDPPB(A,B)$ introduced during the proof of Theorem \ref{SDPBetterProj}, since it only has $\mathcal{O}(n^2)$ linear inequality constraints.
\end{rem}


\section{Energy minimization on a toric grid as a QAP}\label{EnergyMinSection}
We generalize a problem described by Taillard in \cite{taillard1995comparison}, which models the problem of printing a certain shade of grey with only black and white squares ("pixels"). An example of these problems is included in the QAPLIB dataset \cite{burkard1997qaplib}, namely Tai64c.

The goal is to print a particular shade of grey with a given density $m/n$ (ratio of black to total squares),
which is done by repeating a grid of $n=n_1\times n_2$ square cases, exactly $m$ of which are black.
We want the cases to be as regular as possible, so it is natural to model this as an energy minimization problem,
with repulsive particles corresponding to the black squares. {\color{black}We define $[k] := \{1,2,\ldots,k\}$.}
If we have two particles (i.e.
 black squares) at locations $(x_1,y_1)\in [n_1]\times [n_2]$ and $(x_2,y_2)\in [n_1]\times [n_2]$, the potential energy between them is inverse to
their distance, where the distance is given by the shortest path metric on the toric grid, {\color{black}which also known as \emph{Lee metric}}.
We use this grid, since we wish to tile the plane with the $n_1\times n_2$-rectangles, as can be seen in Figure \ref{GridToTorus}.
The potential energy associated with two repulsive particles at respective positions $(x_1,y_1)\in [n_1]\times [n_2]$ and $(x_2,y_2)\in [n_1]\times [n_2]$ is
\begin{equation}
  f_{(x_1, y_1),(x_2, y_2)} = \frac{1}{d_{\mathrm{Lee}}((x_1, y_1),(x_2, y_2))},
\end{equation}
if the coordinates are different, where
\begin{equation}
  d_{\mathrm{Lee}}((x_1, y_1),(x_2, y_2)) = \min(\vert x_1-x_2\vert, n_1 - \vert x_1-x_2\vert )+\min(\vert y_1-y_2\vert, n_2-\vert y_1-y_2\vert)
\end{equation}
is the Lee distance, given by the shortest path metric on the toric grid. We set $f_{i,i} = 0$.

\iftrue

\begin{figure}[h]
\begin{center}
\includegraphics[width=0.75\textwidth]{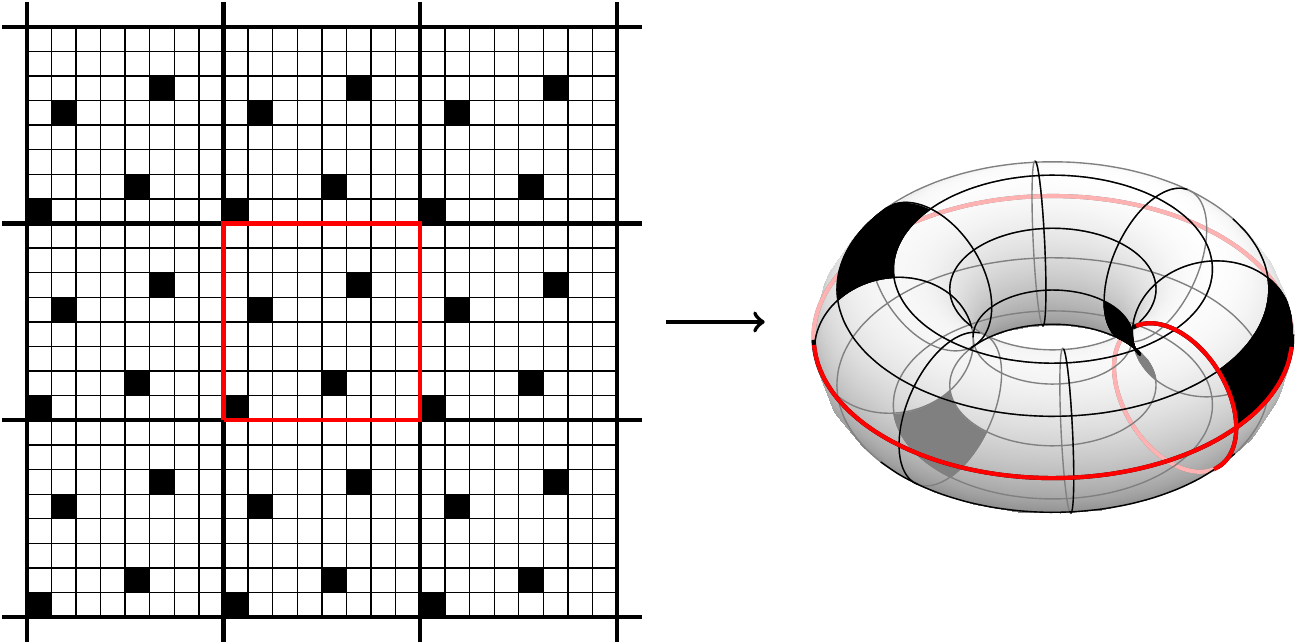}
 \caption{Example of a $n_1\times n_2=8\times 8$ grid tiling with $m=4$, and the corresponding toric interpretation of the $8\times 8$ grid.}\label{GridToTorus}
\end{center}
\end{figure}

\fi

We can then formulate this problem as QAP with matrices {\color{black}$A=(a_{ij}),B=(b_{ij})\in \mathbb{R}^{n\times n}$},
 indexed by grid points $i=(x_i,y_i) \in [n_1]\times [n_2]$, $j=(x_j,y_j)\in [n_1]\times [n_2]$, given by
\begin{align}
  {\color{black}a_{ij}}=\begin{cases}
           1, & \mbox{if } i,j\leq m \\
           0, & \mbox{otherwise}.
         \end{cases}, & \qquad {\color{black}b_{ij}} = f_{i,j} = f_{(x_i,y_i),(x_j,y_j)}.\label{ABDef}
\end{align}
In the definition of $A$ we compared a grid point to an integer $m$. The ordering of the grid points does not matter for the optimal value
 or the symmetry reduction, so it is enough to assume that we have any fixed ordering of the indices, i.e.\ we
  may associate $[n]$ with $[n_1]\times [n_2]$ and will write $[n]=[n_1]\times [n_2]$ when convenient.
 {\color{black} We may also assume  that the ordering is such that  the nonzero elements of the matrix $A$ are given by the $m\times m$-block in the upper left corner, so that}
\begin{equation}\label{EnergyMinQAP}
  \min_{\pi\in S_n} \sum_{i,j=1}^{n} a_{ij}b_{\pi(i)\pi(j)} =  \min_{\pi\in S_n} \sum_{i,j\leq m}b_{\pi(i)\pi(j)} = \min_{\substack{T\subseteq [n_1]\times [n_2] \\ \vert T\vert = m}} \sum_{a,b\in T} f_{a,b}
\end{equation}

Note that the QAP has dimension $n = n_1\times n_2$, and its semidefinite relaxation has dimension $n^2$, which is already 4096 on an $8\times 8$ grid.

To reduce the number of cases one has to look at, we can show a that selecting the complement of an optimal solution leads to another optimal solution.

\begin{prop}\label{InvertArrangement}
  Consider a $[n]=[n_1]\times [n_2]$ grid, and a function $f\colon [n]\times [n]$ with {\color{black}$\sum_{j\in [n]}f(i,j) = \sum_{j\in [n]}f(j,i) = c$ for all $i\in [n]$ and a constant $c\in \mathbb{R}$ }. Then, if $T\subseteq [n]$ minimizes
  \begin{equation*}
    \min_{\substack{T\subseteq [n_1]\times [n_2] \\ \vert T\vert = m}} \sum_{a,b\in T} f_{a,b},
  \end{equation*}
  then $S=[n]\setminus T$ minimizes
  \begin{equation*}
    \min_{\substack{S\subseteq [n_1]\times [n_2] \\ \vert S\vert = n-m}} \sum_{a,b\in S} f_{a,b}.
  \end{equation*}
\end{prop}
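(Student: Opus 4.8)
The plan is to show that the two objective values $F(T) := \sum_{a,b\in T} f_{a,b}$ and $F(S) := \sum_{a,b\in S} f_{a,b}$, where $S = [n]\setminus T$, differ by a quantity that does not depend on the particular choice of $T$. Once this is established, the proposition follows at once: complementation is a bijection between the $m$-subsets and the $(n-m)$-subsets of $[n]$, so shifting every objective value by the same constant preserves the minimizer.

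First I would compute the grand total over the whole grid. Using the row-sum hypothesis,
\[
  \sum_{a,b\in [n]} f_{a,b} = \sum_{a\in [n]}\Bigl(\sum_{b\in [n]} f_{a,b}\Bigr) = \sum_{a\in [n]} c = nc.
\]
Next I would split this grand total according to whether each of the two indices lies in $T$ or in its complement $S$:
\[
  nc = \sum_{a,b\in T} f_{a,b} + \sum_{\substack{a\in T\\ b\in S}} f_{a,b} + \sum_{\substack{a\in S\\ b\in T}} f_{a,b} + \sum_{a,b\in S} f_{a,b}.
\]
The two cross terms are the crux. Fixing $a\in T$ and summing over all $b\in[n]$ gives $c$ by hypothesis, so $\sum_{a\in T,\, b\in [n]} f_{a,b} = mc$; subtracting off the within-$T$ part yields $\sum_{a\in T,\, b\in S} f_{a,b} = mc - F(T)$. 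Symmetrically, applying the column-sum hypothesis to each $b\in T$ gives $\sum_{a\in S,\, b\in T} f_{a,b} = mc - F(T)$.

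Substituting both cross terms back into the decomposition and rearranging produces the identity
\[
  F(S) = F(T) + (n - 2m)c.
\]
Since the additive term $(n-2m)c$ is independent of $T$, for any $(n-m)$-subset $S$ with complement $T=[n]\setminus S$ we have $F(S) = F(T) + (n-2m)c \geq F(T^*) + (n-2m)c = F(S^*)$, where $T^*$ is optimal and $S^* = [n]\setminus T^*$; hence $S^*$ is optimal among $(n-m)$-subsets, which is the claim.

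I expect no serious obstacle in this argument; it is essentially a bookkeeping computation. The one point that genuinely requires care is to invoke the row-sum and the column-sum hypotheses \emph{separately} when handling the two cross terms, since $f$ is not assumed symmetric and the two cross sums are a priori distinct — it is precisely the two-sided regularity of $f$ (both equal to the same constant $c$) that forces them to coincide and makes the final constant clean.
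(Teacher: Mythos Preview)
Your proof is correct and follows essentially the same approach as the paper: both establish the identity $F(S)=F(T)+(n-2m)c$ by decomposing the total sum $\sum_{a,b\in[n]}f_{a,b}=nc$ and using the row/column regularity hypothesis to evaluate the cross terms (the paper writes this as an inclusion--exclusion starting from $F(T)$, but the computation is the same). Your remark about needing both the row-sum and column-sum hypotheses separately for the two cross terms is exactly right.
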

\begin{proof}{\color{black}
  We can rewrite the objective function as
  \begin{align*}
    \sum_{a,b\in T} f_{a,b} &= \sum_{a,b\in [n]} f_{a,b} - \sum_{\substack{a\notin T\\b\in [n]}} f_{a,b}- \sum_{\substack{a\in [n] \\b\notin T}} f_{a,b} + \sum_{\substack{a\notin T\\b\notin t}} f_{a,b} \\
    & = c n - 2c(n-m) + \sum_{a,b\in S} f_{a,b}.
  \end{align*}
  Since the term $c n - 2c(n-m)$ is independent of $T$ and $S$, minimizing $\sum_{a,b\in T} f_{a,b}$ is equivalent to minimizing $\sum_{a,b\in S} f_{a,b}$.}
\end{proof}

\subsection{Eigenvalue bound of Bouman, Draisma and Leeuwaarden}
Problem \eqref{EnergyMinQAP} was considered before in \cite{bouman2013energy}, specifically for the case of the Lee-metric, and $m=\frac{n}{2}$ (which we can see as special case of our variant). They took a look at a different relaxation of the problem, which they call \emph{fractional total energy}
\begin{align}
  \min\enspace & x^TBx \label{FractTotalEnergy}\\
    \mathrm{s.t. }\enspace& x^Tx = x^Te = m,\nonumber
\end{align}
where {\color{black}$B=(b_{i,j})$ is the matrix of potential energies between grid points $i$ and $j$, as defined in \ref{ABDef}}.
 {\color{black}In the relaxation \eqref{FractTotalEnergy}, the discrete variables $x_i$ correspond to particle positions, relaxed to continuous values}.
  Thus if $x$ is the characteristic vector of a subset of $m$ {\color{black}particles}, then it is exactly the potential energy of the set of particles.
 {\color{black}  The optimal solution of the relaxation \eqref{FractTotalEnergy} may be expressed in terms of the eigenvalues of $B$ as follows.}
{\color{black}\begin{prop}[Proposition 2.5. in \cite{bouman2013energy}]\label{FractEnergyOpt}
  Let $\lambda_{\min}$ be the smallest eigenvalue of $B$, and $\lambda_1$ the eigenvalue of $B$ corresponding
   to $e$. Then the set of optimal solutions of the  minimization problem \eqref{FractTotalEnergy} consists of all
   vectors of the form $\frac{m}{n}e+y$, where $y$ belongs to the eigenspace of $B$ with eigenvalue $\lambda_{\min}$,
   is perpendicular to $e$, and satisfies $y^Ty = m-\frac{m^2}{n}$.

  The optimal value, and thus a lower bound for the minimum potential energy of $m$ particles on a toric grid with $n$ nodes, is given by:
  \begin{equation}\label{EigenvalueBound}
    \lambda_1 \frac{m^2}{n}+\lambda_{\min}\left(m-\frac{m^2}{n}\right).
  \end{equation}
\end{prop}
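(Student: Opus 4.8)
The plan is to use the two constraints to collapse the problem onto a sphere inside the hyperplane $e^{\perp}$, where the spectral decomposition of $B$ settles everything. First I would use the linear constraint to parametrize the feasible set: every $x$ with $e^{T}x=m$ can be written uniquely as $x=\frac{m}{n}e+y$ with $y\perp e$, since the component of $x$ along $e$ is forced to be $\frac{m}{n}e$. Feeding this into the quadratic constraint $x^{T}x=m$ and using $e^{T}y=0$ and $e^{T}e=n$ gives $\frac{m^{2}}{n}+y^{T}y=m$, i.e.\ exactly the norm condition $y^{T}y=m-\frac{m^{2}}{n}$ that appears in the statement.

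Next I would simplify the objective. The structural input I rely on is that $e$ is an eigenvector of $B$: because the Lee metric is translation invariant on the torus, every row of $B$ has the same sum, so $Be=\lambda_{1}e$ with $\lambda_{1}$ equal to that common row sum. Expanding $x^{T}Bx$ for $x=\frac{m}{n}e+y$ and using $Be=\lambda_{1}e$, $B=B^{T}$, and $y\perp e$, the two cross terms each reduce to a multiple of $e^{T}y=0$ and therefore vanish, leaving $x^{T}Bx=\frac{m^{2}}{n}\lambda_{1}+y^{T}By$. Thus the problem decouples: the first term is a constant fixed by the constraints, and minimizing $x^{T}Bx$ is equivalent to minimizing $y^{T}By$ over $y\in e^{\perp}$ subject to $y^{T}y=m-\frac{m^{2}}{n}$.

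For this reduced problem I would invoke the Rayleigh--Ritz characterization. Since $B$ is symmetric and $e$ is an eigenvector, the hyperplane $e^{\perp}$ is $B$-invariant, so $B$ restricts to a symmetric operator there and admits an orthonormal eigenbasis of $e^{\perp}$. Minimizing a quadratic form over a sphere of fixed radius in this subspace therefore attains the value (smallest eigenvalue of $B|_{e^{\perp}}$) $\cdot\,(m-\frac{m^{2}}{n})$, with the minimizers being precisely the vectors of that prescribed norm lying in the corresponding eigenspace.

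The one genuine obstacle is to identify the smallest eigenvalue on $e^{\perp}$ with the global smallest eigenvalue $\lambda_{\min}$ of $B$; this needs $\lambda_{\min}\le\lambda_{1}$, i.e.\ that the eigenvalue belonging to $e$ is not itself the smallest. For the potential matrix $B$ this follows from Perron--Frobenius: $B$ has nonnegative entries and $e$ is a strictly positive eigenvector, so $\lambda_{1}$ equals the spectral radius and is the largest eigenvalue. Hence the $\lambda_{\min}$-eigenspace meets $e^{\perp}$ (indeed lies inside it when $\lambda_{\min}<\lambda_{1}$, which also makes the perpendicularity condition in the statement automatic), the reduced minimum equals $\lambda_{\min}(m-\frac{m^{2}}{n})$, and adding back the constant $\frac{m^{2}}{n}\lambda_{1}$ yields the claimed optimal value $\lambda_{1}\frac{m^{2}}{n}+\lambda_{\min}(m-\frac{m^{2}}{n})$. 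Reading off the minimizers $y$ and setting $x=\frac{m}{n}e+y$ then gives the stated description of the optimal set.
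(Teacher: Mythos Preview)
Your argument is correct and is the standard way to prove this result: decompose $x=\frac{m}{n}e+y$ along and orthogonal to $e$, use $Be=\lambda_1 e$ to split the quadratic form, and then apply Rayleigh--Ritz on the sphere in $e^\perp$, with Perron--Frobenius ensuring that $\lambda_1$ is the top eigenvalue so that the restricted minimum coincides with the global $\lambda_{\min}$. Note, however, that the present paper does not give its own proof of this proposition; it is quoted verbatim as Proposition~2.5 of \cite{bouman2013energy}, so there is nothing in the paper to compare against beyond observing that your proof matches the natural (and presumably original) argument.
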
}

{\color{black}We will from now on refer to the bound \eqref{EigenvalueBound} as BDL-eigenvalue bound, named after the authors}.

{\color{black}
\begin{rem}
  The energy minimization problem on a toric grid is a special case of the so-called
   \emph{$m$-cluster} problem, or \emph{minimum weight $m$-subgraph} problem (see e.g. \cite{malick2012solving}).
   Indeed, the toric grid defines the graph in question, the edge weights are the energies between grid points,
   and the minimum weight $m$-subgraph corresponds to the $m$ grid points where the particles are placed in a minimum energy configuration.
   An SDP-bound with $n\times n$ matrix variables was proposed for the maximum weight $m$-subgraph
    problem  in \cite{malick2012solving}, which is of the form \ref{kClusterBound}:
  \begin{align}
  \inf\enspace & \frac{1}{4}e^TBe + \frac{1}{2}e^TBy +\frac{1}{4}\langle B,Y\rangle\label{kClusterBound}\\
    \mathrm{s.t. }\enspace& e^T y = 2m-n,\nonumber\\
    & \sum_{i}Y_{ij} = (2m-n)y_j\quad \text{for }j=1,\ldots,n,\nonumber\\
    & Y_{ii} = 1 \quad \text{for }i=1,\ldots,n,\nonumber\\
    & \begin{pmatrix}
        1 & y^T \\
        y & Y
      \end{pmatrix}\succcurlyeq 0.\nonumber
\end{align}

It is straightforward to check that this bound is at most as good as \eqref{FractTotalEnergy} for our problem. Indeed, since the weighted graph given by $B$ is vertex transitive, we obtain a feasible solution $(Y,y)$ for \eqref{kClusterBound} (with the same objective value) from a feasible solution $x$ for \eqref{FractTotalEnergy} by setting
$$ y = \mathrm{sym}(2x-e) = \left(\frac{2m}{n}-1\right)e \text{ and } Y = \mathrm{R}((2x-e)(2x-e)^T),$$
where $\mathrm{sym}$ averages a vector over the orbits of the automorphism group of the graph given by $B$, and $\mathrm{R}$ is the Reynolds operator
of the same group (i.e.\ it averages matrix entries over the $2$-orbits of the group). Thus this bound
is not strong enough to improve the BDL-eigenvalue bound \eqref{EigenvalueBound}.
\end{rem}

}

\subsection{Bound comparison}
We now want to compare the {\color{black}BDL-eigenvalue bound} with the QAP relaxations described last section, as well as the different QAP relaxations for this specific case.
\begin{prop}\label{EigenIsProjBound}
  The {\color{black}BDL-eigenvalue bound of \cite{bouman2013energy}, see \eqref{EigenvalueBound}}, coincides with
  the {\color{black}projected eigenvalue bound (see Proposition \ref{PBBound})} for the QAP problem \eqref{EnergyMinQAP}.
\end{prop}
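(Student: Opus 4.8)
The plan is to evaluate $PB(A,B)$ from Proposition \ref{PBBound} explicitly, exploiting two structural facts: $A$ has rank one, and $e$ is an eigenvector of $B$. Write $w\in\{0,1\}^n$ for the indicator vector of $\{1,\ldots,m\}$, so that by \eqref{ABDef} we have $A=ww^T$ with $w^Tw=e^Tw=m$. First I would pin down the spectra of $\tilde{A}$ and $\tilde{B}$. Since $\tilde{A}=V^TAV=(V^Tw)(V^Tw)^T$ is rank one, its only nonzero eigenvalue is $\|V^Tw\|^2=w^T(I_n-\tfrac{1}{n}ee^T)w=m-\tfrac{m^2}{n}$, and the other $n-2$ eigenvalues vanish. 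Because the energy matrix $B$ is invariant under the vertex-transitive symmetries of the torus, its row sums are constant, so $Be=\lambda_1 e$; as $B$ is symmetric it leaves $e^\perp$ invariant, and $V$ maps $\mathbb{R}^{n-1}$ isometrically onto $e^\perp$, so the spectrum of $\tilde{B}=V^TBV$ is exactly the spectrum of $B$ with one copy of $\lambda_1$ removed. In particular $\min\mu_{\tilde{B}}=\lambda_{\min}$.

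With these spectra in hand the three terms of $PB(A,B)$ become routine substitutions. For the rearrangement term, $\lambda_{\tilde{A}}$ has the single positive entry $m-\tfrac{m^2}{n}$ and is otherwise zero, so the minimizing assignment in $\langle\lambda_{\tilde{A}},\mu_{\tilde{B}}\rangle^-$ pairs this entry with the smallest eigenvalue of $\tilde{B}$, giving $\langle\lambda_{\tilde{A}},\mu_{\tilde{B}}\rangle^-=(m-\tfrac{m^2}{n})\lambda_{\min}$. For the linear term, $D=\tfrac{2}{n}Aee^TB=\tfrac{2m\lambda_1}{n}we^T$ has entries $d_{ij}=\tfrac{2m\lambda_1}{n}w_i$ independent of $j$, so $\sum_i d_{i\varphi(i)}=\tfrac{2m\lambda_1}{n}\sum_i w_i=\tfrac{2m^2\lambda_1}{n}$ for every $\varphi$, making the assignment problem trivial. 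Finally the constant equals $\tfrac{(e^TAe)(e^TBe)}{n^2}=\tfrac{m^2\cdot n\lambda_1}{n^2}=\tfrac{m^2\lambda_1}{n}$, using $e^TAe=(e^Tw)^2=m^2$ and $e^TBe=n\lambda_1$. Summing the three contributions gives $PB(A,B)=(m-\tfrac{m^2}{n})\lambda_{\min}+\tfrac{2m^2\lambda_1}{n}-\tfrac{m^2\lambda_1}{n}=\lambda_1\tfrac{m^2}{n}+\lambda_{\min}(m-\tfrac{m^2}{n})$, which is precisely the BDL-eigenvalue bound \eqref{EigenvalueBound}.

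The only genuine subtlety, and what I regard as the main point to get right, is the eigenvalue bookkeeping for $\tilde{B}$: one must be sure that deleting the eigenvalue $\lambda_1$ attached to $e$ does not delete the minimum. This holds because $\lambda_{\min}\leq\lambda_1$, and whenever $\lambda_{\min}<\lambda_1$ its eigenvectors lie in $e^\perp$ and hence survive in $\tilde{B}$; the degenerate case $\lambda_{\min}=\lambda_1$ forces all eigenvalues of $B$ to coincide and makes the claimed identity trivial. Everything else is a direct computation, so I expect no real difficulty beyond keeping the rank-one structure of $A$ and the eigenvector $e$ of $B$ straight.
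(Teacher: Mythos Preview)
Your proof is correct and follows essentially the same route as the paper's: both exploit that $A=ww^T$ has rank one to identify the single nonzero eigenvalue of $\tilde A$ as $m-\tfrac{m^2}{n}$, use that $e$ is an eigenvector of $B$ to see that the spectrum of $\tilde B$ is that of $B$ with $\lambda_1$ removed, observe that the linear assignment term is constant in $\varphi$, and then combine the three pieces. Your discussion of why $\lambda_{\min}(\tilde B)=\lambda_{\min}(B)$ is more careful than the paper's (which simply asserts it); note, though, that your degenerate-case argument implicitly uses that $\lambda_1$ is the \emph{largest} eigenvalue of the nonnegative matrix $B$, which follows from Perron--Frobenius.
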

\begin{proof}
  The matrix $D$ {\color{black}in Proposition \ref{PBBound}} is now given by
  \begin{equation*}
    D=\frac{2}{n}Aee^TB = \lambda_1\frac{2}{n} (\underbrace{m,\ldots,m}_{m \text{ times}},0,\ldots,0)^Te^T,
  \end{equation*}
 and thus has entries $\lambda_1\frac{2m}{n}$ in the first $m$ rows, and zeros  otherwise. As such, the permutation $\varphi$ does not influence the result, and
  \begin{equation}\label{LinearTermIsConstant}
    \min_\varphi\sum_{i=1}^{n}d_{i\varphi(i)} = \lambda_1\frac{2m^2}{n}.
  \end{equation}
  We know that $e$ is an eigenvector of $B$, hence we have
  \begin{equation*}
    \frac{(e^TAe)(e^TBe)}{n^2} = \frac{m^2\lambda_1n}{n^2} = \lambda_1 \frac{m^2}{n}.
  \end{equation*}
  The matrix $A$ has rank one, so $\tilde{A}$ has rank one as well. Since $e$ is an eigenvector of $B$, $\tilde{B}$ has the same eigenvalues as $B$, except for $\lambda_1$, thus we get
  \begin{equation*}
    \langle \lambda_{\tilde{A}},\mu_{\tilde{B}}\rangle^- = \lambda_{\max}(\tilde{A})\lambda_{\min}(\tilde{B}) = \mathrm{tr}(\tilde{A})\lambda_{\min}.
  \end{equation*}
  The eigenvalue of $\tilde{A}$ is exactly
  \begin{align*}
    \mathrm{tr}(\tilde{A}) &= \mathrm{tr}(V^TAV), \\
     &= \mathrm{tr}(A) - \frac{1}{n}\mathrm{tr}(AJ),\\
     &= m-\frac{m^2}{n}.
  \end{align*}
  Combining these, we see that the projection bound is the same as the eigenvalue bound:
  \begin{align*}
    PB(A,B) &= \langle \lambda_{\tilde{A}},\mu_{\tilde{B}}\rangle^- + \min_\varphi\sum_{i=1}^{n}d_{i\varphi(i)} - \frac{(e^TAe)(e^TBe)}{n^2} \\
    &= \lambda_1 \frac{m^2}{n}+\lambda_{\min}\left(m-\frac{m^2}{n}\right).
  \end{align*}
\end{proof}
Thus the BDL-eigenvalue  bound \eqref{EigenvalueBound} is the same as
the {\color{black}weakest} of the QAP bounds we considered, namely the bound $PB(A,B)$. Furthermore, even the convex quadratic bound
cannot give us better bounds here, as we show now.
\begin{prop}\label{PBisQPB}
  If $A$ and $B$ are of the form
  \begin{align*}
  {\color{black}a_{ij}}=\begin{cases}
           1, & \mbox{if } i,j\leq m \\
           0, & \mbox{otherwise}.
         \end{cases}, & \quad {\color{black}b_{i,j}} = f_{i,j} = f_{(x_i,y_i),(x_j,y_j)},
\end{align*}
as defined for the energy minimization problem, then we have that
\begin{equation*}
  PB(A,B)=QPB(A,B),
\end{equation*}
where
  \begin{align*}
       &PB(A,B) = \langle \lambda_{\tilde{A}},\mu_{\tilde{B}}\rangle^-+\min_\varphi\sum_{i=1}^{n}d_{i\varphi(i)}-\frac{(e^TAe)(e^TBe)}{n^2}\\
        & QPB(A,B) = \min_{\substack{X\geq 0 \text{ doubly stochastic}\\X=\frac{1}{n}ee^T + VYV^T\\y = \mathrm{vec}(Y)}} y^T\hat{Q}y +  \langle \lambda_{\tilde{A}},\mu_{\tilde{B}}\rangle^- + \frac{2}{n}\mathrm{tr}\left(BJAX\right) - \frac{(e^TAe)(e^TBe)}{n^2},
  \end{align*}
  where $D = (d_{ij}) = \frac{2}{n}AJB$ and $\hat{Q}$ is positive semidefinite, as defined before (see \eqref{PBBound}, \eqref{QPBBound}).
\end{prop}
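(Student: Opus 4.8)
The plan is to invoke the already-established inequality $PB(A,B)\le QPB(A,B)$ from Proposition \ref{QPBBound}, so that only the reverse inequality $QPB(A,B)\le PB(A,B)$ remains. Comparing the two expressions in the statement, the terms $\langle\lambda_{\tilde A},\mu_{\tilde B}\rangle^-$ and $\frac{(e^TAe)(e^TBe)}{n^2}$ are identical constants in both bounds, so the entire discrepancy reduces to comparing the quadratic-plus-linear term $y^T\hat Q y+\frac{2}{n}\mathrm{tr}(BJAX)$ appearing in $QPB$ against the assignment term $\min_\varphi\sum_i d_{i\varphi(i)}$ appearing in $PB$.

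The key step is to show that for this particular data the linear term $\frac{2}{n}\mathrm{tr}(BJAX)$ is in fact \emph{constant} over all feasible (doubly stochastic) $X$. I would use that $e$ is an eigenvector of $B$, say $Be=\lambda_1 e$, which holds because the toric grid is vertex-transitive and hence every row of $B$ has the same sum. Then $BJ=Bee^T=\lambda_1 ee^T$, so $\mathrm{tr}(BJAX)=\lambda_1\,e^TAXe$, and since every feasible $X$ satisfies $Xe=e$ this collapses to $\mathrm{tr}(BJAX)=\lambda_1 e^TAe=\lambda_1 m^2$, independent of $X$. Moreover this value matches precisely the assignment term already computed in Proposition \ref{EigenIsProjBound}, namely $\frac{2}{n}\lambda_1 m^2=\min_\varphi\sum_i d_{i\varphi(i)}$.

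With the linear term pinned to a constant, the objective of $QPB(A,B)$ becomes $y^T\hat Q y$ plus exactly the constants defining $PB(A,B)$. Since $\hat Q\succcurlyeq 0$ we have $y^T\hat Q y\ge 0$, and I would then exhibit the feasible point $X=\frac1n ee^T$, for which $VYV^T=0$ forces $Y=0$ and hence $y=\mathrm{vec}(Y)=0$, giving $y^T\hat Q y=0$. This choice is admissible because $\frac1n ee^T$ is nonnegative and doubly stochastic. Substituting it yields $QPB(A,B)\le PB(A,B)$, which together with Proposition \ref{QPBBound} gives equality.

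I expect the only genuinely delicate point to be the constancy of the linear term; the remainder is bookkeeping. The argument works cleanly precisely because the nonzero block of $A$ is aligned so that $e^TAe=m^2$ and because $B$ has $e$ as an eigenvector, which together let the coupling between $X$ and $B$ degenerate into a scalar. The one thing to double-check is that no other ingredient of $QPB$ secretly depends on $X$ and that the feasibility constraints genuinely admit $y=0$; since the uniform matrix $\frac1n ee^T$ is always doubly stochastic, this poses no difficulty.
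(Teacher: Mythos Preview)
Your proposal is correct and follows essentially the same approach as the paper: cancel the common constants, show the linear term $\frac{2}{n}\mathrm{tr}(BJAX)$ is constant over doubly stochastic $X$ (the paper cites the earlier computation in \eqref{LinearTermIsConstant} while you compute it directly via $Be=\lambda_1 e$ and $Xe=e$), and then exhibit $X=\frac{1}{n}ee^T$ to make the quadratic term vanish. The two arguments are interchangeable.
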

\begin{proof}
  If we eliminate the terms which appear in both programs, we see that we want to show
  \begin{equation*}
   \min_\varphi\sum_{i=1}^{n}d_{i\varphi(i)}= \min_{\substack{X\geq 0 \text{ doubly stochastic}\\X=\frac{1}{n}ee^T + VYV^T\\y = \mathrm{vec}(Y)}} y^T\hat{Q}y  + \frac{2}{n}\mathrm{tr}\left(BJAX\right).
  \end{equation*}
  By definition of $D$ the two linear terms are equal, except that on the left we minimize over permutations, and on the right over doubly stochastic matrices. Because the terms are linear, and doubly stochastic matrices are convex combinations of permutations, the minima of the two linear terms are equal. Since $\hat{Q}$ is positive semidefinite, we thus want to find a doubly stochastic $X=\frac{1}{n}J+VYV^T$ with $y^T\hat{Q}y=0$, which minimizes the linear term. Earlier in \eqref{LinearTermIsConstant} we have seen that
  \begin{equation*}
    \sum_{i=1}^{n}d_{i\varphi(i)}  = \lambda_1\frac{2m^2}{n}\quad\forall \varphi,
  \end{equation*}
  and is thus the linear term is constant. Hence the term is also minimized for the average $X=\frac{1}{n}J$ of all permutations. For this $X$ we have $Y=\frac{1}{n}V^TJV=0$, and consequently $y^T\hat{Q}y=0$. Thus there is a feasible $X$ of $QPB(A,B)$ with objective value $PB(A,B)$, and the Proposition follows since $QPB(A,B) \geq PB(A,B)$.
\end{proof}

{\color{black}Thus it makes sense to consider the SDP-bound $SDPQAP(A,B)$ for the energy minimization QAP problem \eqref{ABDef}, if one wants to find stronger bounds than the BDL-bound used
in \cite{bouman2013energy}.}

\section{Reducing the relaxation of the energy minimization problem}
\label{sec:torusQAPreduce}
In this section {\color{black} we exploit the symmetry of the SDP-bound SDPQAP in the case of the energy minimization problem.
 Recall from Section \ref{EnergyMinSection}, and with reference to Figure \ref{GridToTorus}, that this is a
  quadratic assignment problem given by $A$ and $B$,} where
$B = (b_{ij})$ is indexed by toric grid points, and $b_{ij}$ $(i\neq j)$ equals the inverse of the Lee distance (shortest path on the grid) between
grid points $i$ and $j$, and $b_{ii} = 0$ for all $i$.
The matrix $A$ is zero except for a square block of all-ones in the upper left corner, of size equal to the number of particles on the toric grid.
 {\color{black} There are many approaches to exploit the symmetry of a conic optimization problem.
 Earlier work on group-theoretical symmetry reductions of SDP bounds for QAP was done in \cite{de2010exploiting,de2012improved}.
  We chose to apply the Jordan Reduction method of Parrilo and Permenter \cite{permenter2016dimension}  to our problem, which we will quickly summarize.}

\subsection{The Jordan Reduction}
{\color{black}
Parrilo and Permenter \cite{permenter2016dimension} introduced a set of three conditions a subspace has to fulfill,
such that it is be possible to use it for symmetry reduction. Here we  revisit some of their results.
\begin{defn}
  A \emph{projection} is a linear transformation $P\colon \mathcal{V}\to\mathcal{V}$  defined on a vector space $\mathcal{V}$ which is \emph{idempotent},
  i.e. $P^2=P$. If $\mathcal{V}$ is equipped with an inner product, the projection is called \emph{orthogonal},
  if it is self-adjoint with respect to this inner product.
   We denote the orthogonal projection onto a subspace
   $\mathcal{L} \subset \mathcal{V}$ by $P_{\mathcal{L}}$.
\end{defn}

We assume that the problem to be reduced is in the form
\begin{equation}\label{def:conic opt problem}
\left.
\begin{array}{llrl}
  \inf\enspace&\langle C,X\rangle  \\
  \mathrm{s.t.}\enspace& X\in X_0+\mathcal{L} \\
   & X\in S^n_+,
\end{array}
  \right\}
  \end{equation}
where $X_0 \in \mathbb{R}^{n\times n}$, and $\mathcal{L}$ is a linear subspace of $\mathbb{R}^{n\times n}$.

\begin{thm}[Theorem 5.2.4 and Proposition 1.4.1 in \cite{permenter2017reduction}]
\label{thm:equivalent CSICs}
Consider the conic optimization problem \eqref{def:conic opt problem} and let $S \subseteq \mathbb{R}^{n\times n}$ be a subspace of $\mathbb{R}^{n\times n}$. Define
$
C_{\mathcal{L}} = P_{\mathcal{L}}(C)
$
and $X_{0, \mathcal{L}^\perp} = P_{\mathcal{L}^\perp}(X_0)$.
If $S$ fulfills
 \begin{itemize}
      \item[(a)] $C_\mathcal{L},X_{0,\mathcal{L}^\perp} \in S$,
     \item[(b)] $P_\mathcal{L}(S) \subseteq S$,
      \item[(c)] $S\supseteq \{X^2 \enspace\colon\enspace X\in S\}$,
 \end{itemize}
then restricting the feasible set of conic program \eqref{def:conic opt problem}
to  $S$  results in another --- potentially significantly smaller --- program, with the same optimal value:
\begin{align*}
 \inf\enspace&\langle P_S(C),X\rangle \\
  \mathrm{s.t.}\enspace & X\in P_S(X_0)+\mathcal{L}\cap S, \\
  & X\in S^n_+\cap S.
\end{align*}

We call such an $S$ admissible for the problem \eqref{def:conic opt problem}.
\end{thm}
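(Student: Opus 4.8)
The plan is to show that the orthogonal projection $P_S$ sets up a value-preserving correspondence between the feasible set of \eqref{def:conic opt problem} and that of the reduced program. Concretely, I would prove two things: every point feasible for the reduced program is already feasible for the original one (so the reduced optimal value is at least the original), and the image $P_S(X)$ of any original feasible $X$ is feasible for the reduced program with the \emph{same} objective value (so the reduced optimal value is at most the original). Together these force equality of the optimal values. Throughout I use that $P_S$ is self-adjoint and idempotent, and that the matrices involved may be taken symmetric.

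The first ingredient I would extract is that condition (b) makes $P_S$ and $P_{\mathcal{L}}$ commute. Reading (b) as $P_S P_{\mathcal{L}} P_S = P_{\mathcal{L}} P_S$ and taking adjoints gives $P_S P_{\mathcal{L}} = P_S P_{\mathcal{L}} P_S$, whence $P_{\mathcal{L}} P_S = P_S P_{\mathcal{L}}$. Commuting orthogonal projections satisfy $P_S(\mathcal{L}) \subseteq \mathcal{L}$ and $P_S(\mathcal{L}^\perp) \subseteq \mathcal{L}^\perp$. Combining this with $P_{\mathcal{L}^\perp}(X_0) = X_{0,\mathcal{L}^\perp} \in S$ from (a), a short computation gives $P_{S^\perp}(X_0) = P_{S^\perp} P_{\mathcal{L}}(X_0) \in \mathcal{L} \cap S^\perp$. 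This identity is exactly what matches the affine parts: for $X = X_0 + L$ with $L \in \mathcal{L}$ one gets $P_S(X) - P_S(X_0) = P_S(L) \in \mathcal{L} \cap S$, while conversely any $X \in P_S(X_0) + \mathcal{L}\cap S$ obeys $X - X_0 = -P_{S^\perp}(X_0) + (\text{element of }\mathcal{L}) \in \mathcal{L}$, so it is original-feasible.

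The crux — and the step I expect to be the main obstacle — is showing that condition (c) forces $P_S$ to be a \emph{positive} map, i.e.\ $P_S(X) \succcurlyeq 0$ whenever $X \succcurlyeq 0$. By polarization, closure of $S$ under squaring upgrades to closure under the Jordan product $\tfrac12(XY+YX)$, so $S$ is a Jordan subalgebra and in particular contains every power $A^k$, $k\geq 1$, of each $A \in S$. Given $X \succcurlyeq 0$, I would analyse $A := P_S(X)$ through its spectral decomposition $A = \sum_i \lambda_i q_i$: for each nonzero eigenvalue the spectral projector $q_i$ can be written as a polynomial in $A$ with no constant term, hence lies in $S$, and as a symmetric idempotent it is itself positive semidefinite. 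Self-adjointness of $P_S$ then yields $\lambda_i\,\mathrm{tr}(q_i) = \langle A, q_i\rangle = \langle X, q_i\rangle \geq 0$, so every $\lambda_i \geq 0$ and $A \succcurlyeq 0$. The delicate point is precisely the non-unital case, where one must annihilate the zero eigenvalue without a constant term, which is why the interpolating polynomial carries a factor of $A$; this lemma is the heart of the Jordan reduction and the only genuinely nontrivial ingredient.

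Finally I would verify the objective. Since the reduced variable lives in $S$, $\langle P_S(C), X\rangle = \langle C, P_S(X)\rangle = \langle C, X\rangle$ for $X \in S$, so the two objectives agree on the reduced feasible set; together with the inclusion of the reduced feasible set into the original one, this gives the $\geq$ direction. For the $\leq$ direction, for original feasible $X = X_0 + L$ I would compute $\langle P_S(C), P_S(X)\rangle = \langle C, P_S(X)\rangle = \langle C, P_S(X_0)\rangle + \langle C, P_S(L)\rangle$; using the commutation relations and $C_{\mathcal{L}}, X_{0,\mathcal{L}^\perp} \in S$ from (a), the cross term collapses because $\langle P_{S^\perp}(C), X_0\rangle = \langle C_{\mathcal{L}}, P_{S^\perp}(X_0)\rangle = 0$ (as $C_{\mathcal{L}} \in S$ is orthogonal to $P_{S^\perp}(X_0) \in S^\perp$) and $\langle C, P_S(L)\rangle = \langle C, L\rangle$, leaving exactly $\langle C, X\rangle$. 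Hence $P_S(X)$ is reduced-feasible with identical objective, and the two optimal values coincide.
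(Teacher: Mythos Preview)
The paper does not supply its own proof of this theorem: it is quoted verbatim as ``Theorem 5.2.4 and Proposition 1.4.1 in \cite{permenter2017reduction}'' and then used as a black box for the symmetry reduction in Section~\ref{sec:torusQAPreduce}. So there is no in-paper argument to compare against; your proposal is a reconstruction of the Parrilo--Permenter proof rather than a parallel to anything the authors wrote.

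That said, your reconstruction is sound and tracks the standard Jordan-reduction argument. The derivation of $P_SP_{\mathcal L}=P_{\mathcal L}P_S$ from (b) via $P_SP_{\mathcal L}P_S=P_{\mathcal L}P_S$ and adjoints is correct, and it immediately gives $P_{S^\perp}(X_0)\in\mathcal L\cap S^\perp$ once one uses $X_{0,\mathcal L^\perp}\in S$ from (a); this is exactly what makes the affine pieces line up in both directions. Your treatment of positivity is the right one: closure under squares gives closure under all powers of a fixed $A\in S$, and for each nonzero eigenvalue the associated spectral projector is a polynomial in $A$ with vanishing constant term (Lagrange interpolation through $0$ and the spectrum), hence lies in $S$; pairing against $X\succcurlyeq 0$ forces every eigenvalue of $P_S(X)$ to be nonnegative. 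The objective computation is also clean: both discrepancy terms $\langle C,P_{S^\perp}(X_0)\rangle$ and $\langle C,P_{S^\perp}(L)\rangle$ vanish because $P_{S^\perp}(X_0),P_{S^\perp}(L)\in\mathcal L\cap S^\perp$ while $C_{\mathcal L}\in S$. One small caveat worth making explicit in a write-up: the spectral argument tacitly assumes $S\subseteq\mathbb{S}^n$ (so that $P_S(X)$ is symmetric and has a spectral decomposition), which the paper adopts in the sentence immediately following the theorem but which is not part of the hypotheses as stated.
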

We will use the concept of a Jordan algebra: for our purposes this will be a subspace of symmetric
matrices that is closed under the product $X \circ Y = \frac{1}{2}(XY+YX)$. It follows from condition (c) in the theorem that
every admissible subspace is a Jordan algebra.
Indeed, a subspace of symmetric matrices is a Jordan algebra if and only if it is closed under taking squares, due to the identity
$
X \circ Y = \frac{1}{2}\left((X+Y)^2 - X^2 - Y^2\right).
$
We will denote the full Jordan algebra of symmetric $n \times n$ matrices by $\mathbb{S}^n$.

Note that, for the SDP-bound SDPQAP, one would actually use the cone of entry-wise nonnegative matrices (i.e.\ doubly nonnegative matrices), as opposed to $S^n_+$.
To deal with the nonnegativity, we will in fact work with admissible subspaces that have 0-1 bases and where the basis matrices have disjoint support (so-called partition
subspaces).
}

\subsection{Symmetric circulant matrices}
First, we need some well-known properties of (symmetric) circulant matrices, which will appear later in the construction of the admissible subspaces of the relaxation of the energy minimization problem.
\begin{defn}
  An $n\times n$ matrix $C$ is called \emph{circulant}, if each row is rotated one element to the right relative to the row above, i.e. $C_{ij}=c_{j-i\mod n}$ for all $i,j$ and fitting $c_k$, $k=0,\ldots, n-1$.
\end{defn}
\begin{prop}\label{SymmCirculantProp}
  A symmetric circulant $n\times n$ matrix $C$ has at most $\lfloor \frac{n}{2}\rfloor + 1= \lceil\frac{n+1}{2}\rceil$ unique entries, and $c_k = c_{n-k}$.
\end{prop}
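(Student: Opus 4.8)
The plan is to derive the relation $c_k = c_{n-k}$ directly from the symmetry of $C$, and then count the number of equivalence classes that this relation induces on the index set $\{0,1,\ldots,n-1\}$ of the generating entries. Everything reduces to unwinding the definition and a short even/odd case split, so there is no deep content — the work is purely combinatorial bookkeeping.

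First I would unwind the definitions. Since $C$ is circulant we have $C_{ij} = c_{(j-i)\bmod n}$, and since $C$ is symmetric we have $C_{ij} = C_{ji}$, i.e.\ $c_{(j-i)\bmod n} = c_{(i-j)\bmod n}$ for all $i,j$. Writing $k = (j-i)\bmod n$, the index $(i-j)\bmod n$ equals $(n-k)\bmod n$, so this is precisely the statement $c_k = c_{n-k}$ for $k = 1,\ldots,n-1$ (with $c_0 = c_0$ holding trivially). This already establishes the second claim of the proposition.

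Next I would count the distinct values. The relation $c_k = c_{n-k}$ identifies the index $k$ with $n-k$. The index $0$ is a fixed point and contributes the single value $c_0$; the remaining indices $1,\ldots,n-1$ are partitioned into pairs $\{k, n-k\}$, with the additional fixed point $k = n/2$ appearing when $n$ is even. A short case split then gives the count: if $n = 2p$ one may take $0,1,\ldots,p$ as representatives, yielding $p+1$ values, and if $n = 2p+1$ the representatives $0,1,\ldots,p$ again yield $p+1$ values. In both cases this is $\lfloor n/2\rfloor + 1$, which bounds the number of unique entries as claimed.

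Finally I would verify the arithmetic identity $\lfloor n/2\rfloor + 1 = \lceil (n+1)/2\rceil$ by the same even/odd split, which is immediate. The only step that requires any care — and the place an off-by-one error would creep in — is the modular bookkeeping: correctly identifying $(i-j)\bmod n$ with $n-k$, and treating the fixed points $0$ and (when $n$ is even) $n/2$ separately so that the pairs $\{k,n-k\}$ are not double-counted.
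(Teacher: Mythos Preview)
Your proof is correct and follows essentially the same approach as the paper: derive $c_k = c_{n-k}$ from $C_{ij} = C_{ji}$ and then observe that the matrix is determined by $c_0,\ldots,c_{\lfloor n/2\rfloor}$. The paper's version is simply terser, omitting the explicit even/odd case split and the verification of $\lfloor n/2\rfloor + 1 = \lceil (n+1)/2\rceil$ that you spell out.
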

\begin{proof}
  Let $j\geq i$ and $k = j-i$. By definition we have $c_k= C_{ij} = C_{ji} = c_{n-(j-i)}=c_{n-k}$. Hence $C$ is given by $c_0,\ldots, c_{\lfloor \frac{n}{2}\rfloor}$.
\end{proof}

This allows us to construct symmetric circulant matrices from a given $c\in\mathbb{R}^{\lceil\frac{n+1}{2}\rceil}$. We call this function $C=\mathrm{circ}_n(c_0,\ldots, c_{\lfloor \frac{n}{2}\rfloor})$.

\begin{prop}[E.g. Theorem 7 in \cite{gray2006toeplitz}]\label{SymCircProd}
  The product of two circulant matrices is a circulant matrix, and the product commutes. The product of symmetric circulant matrices is symmetric.
\end{prop}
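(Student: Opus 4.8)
The plan is to represent every circulant matrix as a polynomial in a single cyclic shift matrix, which reduces all three claims to elementary facts about commuting polynomials. Let $P$ denote the $n \times n$ cyclic shift permutation matrix, defined by $P_{ij} = 1$ if $j - i \equiv 1 \pmod n$ and $P_{ij} = 0$ otherwise. Then $P^n = I_n$, and for each $k \in \{0,\ldots,n-1\}$ the power $P^k$ is precisely the circulant matrix whose generating sequence has a single $1$ in position $k$, since $(P^k)_{ij} = 1$ exactly when $j - i \equiv k \pmod n$. Reading off the definition $C_{ij} = c_{j-i \bmod n}$, an arbitrary circulant matrix therefore decomposes as $C = \sum_{k=0}^{n-1} c_k P^k$; that is, the circulant matrices are exactly the polynomials in $P$ of degree less than $n$.

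With this representation the first two claims are immediate. First I would write $C = \sum_k c_k P^k$ and $D = \sum_l d_l P^l$ and expand $CD = \sum_{k,l} c_k d_l P^{k+l}$. Since $P^n = I_n$, each $P^{k+l}$ equals $P^{(k+l) \bmod n}$, which is again circulant, so $CD$ is a linear combination of circulant matrices and hence circulant. Commutativity follows at once from the same expansion, since the scalars commute and the exponents add symmetrically: $CD = \sum_{k,l} c_k d_l P^{k+l} = \sum_{k,l} d_l c_k P^{l+k} = DC$.

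For the final claim I would use that transposition acts on the shift matrix by $(P^k)^T = P^{-k} = P^{n-k}$. If $C$ and $D$ are symmetric circulant matrices, then in particular they are circulant, so by the first part $CD$ is circulant and $CD = DC$; moreover $(CD)^T = D^T C^T = DC = CD$, where the second equality uses the symmetry of $C$ and $D$ and the third uses the commutativity just established. Hence $CD$ is symmetric. Equivalently one may invoke Proposition \ref{SymmCirculantProp}, which characterizes symmetric circulants by $c_k = c_{n-k}$, and check that this condition is preserved under the cyclic convolution of generating sequences induced by the matrix product.

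The only real care needed is in setting up the shift-matrix representation and handling the index arithmetic modulo $n$; once $C = \sum_k c_k P^k$ is established, every assertion reduces to the trivial observation that polynomials in one matrix commute, so I do not expect any serious obstacle. An alternative, more computational route would verify directly that $(CD)_{ij}$ depends only on $j - i \bmod n$ by expanding the entrywise product and reindexing the summation, but this is messier and I would avoid it in favor of the polynomial argument.
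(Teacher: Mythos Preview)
Your argument is correct: representing circulant matrices as polynomials in the cyclic shift $P$ immediately yields closure under products and commutativity, and the symmetry of the product then follows from $(CD)^T = D^T C^T = DC = CD$. The paper does not supply its own proof of this proposition but simply cites the reference \cite{gray2006toeplitz}, so there is nothing to compare against beyond noting that your polynomial-in-$P$ approach is the standard one underlying that cited result.
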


We call the Jordan algebra  (with product $X \circ Y = \frac{1}{2}(XY+YX)=XY$) of symmetric circulant $n\times n$ matrices $\mathfrak{C}^n$.
 We define a $0/1$-basis for $\mathfrak{C}^n$ by
\begin{equation}\label{CirculantBasis}
  \left\lbrace C^n_i = \mathrm{circ}_n(d_i) \enspace\colon\enspace i = 0,\ldots, \lfloor \frac{n}{2}\rfloor\right\rbrace,
\end{equation}
where for $i\notin\{0, \frac{n}{2}\}$ we set $d_i = e_i\in \mathbb{R}^{\lfloor \frac{n}{2}\rfloor+1}$, the vector with a one in position $i$, and zero otherwise. For $i=0$ and $i=\frac{n}{2}$, if $n$ is even, we set $d_i = 2e_i$.

\subsection{Admissible subspaces}
{\color{black}
To  reduce the program $SDPQAP(A,B)$ in \eqref{QAPSDP} for the energy minimization problem \eqref{ABDef}, one should
 find an admissible subspace $S$ for every such problem. In this case $\mathcal{L}$ is the subspace given by the $Y\in {\mathbb{S}}^{n^2}$ with
\begin{align*}
\langle I_n\otimes E_{jj},Y\rangle &=0 \text{ for }j\in [n],\\
       \langle E_{jj}\otimes I_n,Y\rangle&=0 \text{ for }j\in [n],\\
       \langle T,Y\rangle &=0, \\
       \langle J_{n^2},Y\rangle &= 0,
\end{align*}
where $T=I_n\otimes (J_n-I_n)+(J_n-I_n)\otimes I_n$, and $X_0$ is any symmetric matrix satisfying the linear constraints of the SDP \eqref{QAPSDP}, e.g. $X_0 = \mathrm{vec}(I_n)\mathrm{vec}(I_n)^T$.

Note here that we are missing the constraint that $X$ is entry-wise nonnegative. But it is easy to check that if we restrict $S$ to be a \emph{partition} subspace (i.e. a subspace with an orthogonal $0/1$-basis), then $P_S(X)$ is doubly-nonnegative if $X$ is.
}

Recall that, for $n=n_1n_2$, the matrix $B\in \mathbb{R}^{n\times n}$ is defined
 by $B_{(x_1,y_1),(x_2,y_2)}=1/d_{\mathrm{Lee}}((x_1,y_1),(x_2,y_2))$, where $d_{\mathrm{Lee}}$ is the Lee-distance
  (length of shortest path on the toric grid). The ordering of the indices $[n_1]\times [n_2]=[n]$ we left implicit in {\color{black}earlier sections} of this paper,
   but now we fix it to $(x,y) \mapsto n_2(x-1)+y$. $A\in \mathbb{R}^{n\times n}$ is the matrix with an $m\times m$ all-one block in the top left corner,
   and otherwise zero.

In this section we will make use of Tensor products of algebras. As a reminder, if $A_1,\ldots, A_{d_1}\in \mathbb{R}^{n_1\times n_1}$
is a basis of a matrix algebra $\mathcal{A}$, and $B_1,\ldots, B_{d_2}\in \mathbb{R}^{n_2\times n_2}$ a basis of a matrix algebra $\mathcal{B}$,
 then $\mathcal{A}\otimes \mathcal{B}$ is the $n_1n_2\times n_1n_2$ matrix algebra with basis $A_i\otimes B_j$, for $i\in [d_1], j\in [d_2]$.

We restrict ourselves to a partition subspace, which means that the exact values of the entries of the matrix do not matter to us, only the pattern of unique elements. For the first of the three properties, we take a look at the structure of $\mathbf{C} \coloneqq B\otimes A$.


\begin{lem}\label{BStruct}
  $B\in \mathfrak{C}^{n_1}\otimes \mathfrak{C}^{n_2}$, i.e. $B$ is a block matrix, with $n_1$ rows and columns of blocks, which are arranged in a symmetric circulant pattern, and each of these blocks is an $n_2\times n_2$ symmetric circulant matrix.

\end{lem}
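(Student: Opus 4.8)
The plan is to read the tensor structure directly off the additive decomposition of the Lee distance together with the chosen index ordering. Under the ordering $(x,y)\mapsto n_2(x-1)+y$, the coordinate $x$ plays the role of an outer block index in $[n_1]$ and $y$ the role of an inner index in $[n_2]$, so $B$ splits naturally into an $n_1\times n_1$ array of $n_2\times n_2$ blocks, the $(p,q)$ block having $(r,s)$ entry $B_{(p,r),(q,s)}$. The key observation is that the Lee distance separates as $d_{\mathrm{Lee}}((p,r),(q,s)) = \delta_1(p,q)+\delta_2(r,s)$, where $\delta_1(p,q)=\min(|p-q|,n_1-|p-q|)$ depends only on $(p-q)\bmod n_1$ and is symmetric in its arguments, and likewise $\delta_2(r,s)=\min(|r-s|,n_2-|r-s|)$ depends only on $(r-s)\bmod n_2$ and is symmetric.

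First I would fix a block $(p,q)$ and show that each block is symmetric circulant. Since $\delta_1(p,q)$ is constant across the block, the entry $B_{(p,r),(q,s)}=1/(\delta_1(p,q)+\delta_2(r,s))$ is a function of $r,s$ only through $\delta_2(r,s)$; as $\delta_2$ depends only on $(r-s)\bmod n_2$ and is symmetric, Proposition \ref{SymmCirculantProp} shows the block lies in $\mathfrak{C}^{n_2}$. The only entry needing the convention $b_{ii}=0$ is the diagonal entry $r=s$ of a diagonal block $p=q$, where the denominator would vanish; setting it to $0$ merely fixes the $c_0$ slot of that circulant block and is consistent with circulant structure. Next I would establish the block-circulant pattern: because the whole $(p,q)$ block depends on $p,q$ only through $\delta_1(p,q)$, which depends only on $(p-q)\bmod n_1$ and is symmetric, the array of blocks is itself symmetric circulant in the sense of Proposition \ref{SymmCirculantProp}, now with matrix-valued entries.

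To conclude, I would assemble these two facts into membership in $\mathfrak{C}^{n_1}\otimes\mathfrak{C}^{n_2}$ by writing $B$ explicitly in the basis \eqref{CirculantBasis}. Letting $C^{n_1}_i$ and $C^{n_2}_j$ denote the basis matrices whose supports are exactly the pairs at circular distance $i$ and $j$ respectively, one has
\begin{equation*}
  B = \sum_{(i,j)\neq(0,0)} \frac{1}{i+j}\, C^{n_1}_i \otimes C^{n_2}_j,
\end{equation*}
where $i$ ranges over $\{0,\ldots,\lfloor n_1/2\rfloor\}$ and $j$ over $\{0,\ldots,\lfloor n_2/2\rfloor\}$, and the term $(i,j)=(0,0)$ is omitted to reflect $b_{ii}=0$. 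This exhibits $B$ as a linear combination of the spanning products $C^{n_1}_i\otimes C^{n_2}_j$, hence $B\in\mathfrak{C}^{n_1}\otimes\mathfrak{C}^{n_2}$.

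I do not expect a genuine obstacle here: the content is entirely in the separability $d_{\mathrm{Lee}}=\delta_1+\delta_2$ and in correctly matching the ordering $(x,y)\mapsto n_2(x-1)+y$ to the outer and inner block indices. The only points demanding care are the normalization constants attached to the $i\in\{0,n_1/2\}$ (and $j\in\{0,n_2/2\}$) basis vectors in \eqref{CirculantBasis}, which rescale the corresponding coefficients but do not affect membership, and the bookkeeping around the coincident-point convention $b_{ii}=0$, which I have isolated to a single circulant slot above.
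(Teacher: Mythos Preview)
Your proposal is correct and follows essentially the same approach as the paper: both arguments rest on the fact that the Lee distance depends only on $(x_2-x_1)\bmod n_1$ and $(y_2-y_1)\bmod n_2$ and is symmetric in its arguments, so that under the ordering $(x,y)\mapsto n_2(x-1)+y$ the inner blocks lie in $\mathfrak{C}^{n_2}$ and the block pattern lies in $\mathfrak{C}^{n_1}$. Your write-up is more explicit than the paper's---in particular the basis expansion $B=\sum_{(i,j)\neq(0,0)}\frac{1}{i+j}\,C^{n_1}_i\otimes C^{n_2}_j$ (with the normalization caveat you flag) is not spelled out there---but the underlying idea is identical.
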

\begin{proof}
The Lee-distance between $(x_1,y_1)$ and $(x_2,y_2)$ depends only on $x_2-x_1\mod n_1$ and $y_2-y_1\mod n_2$, and the order of the arguments do not matter. This means that both the submatrices for fixed $x$ and for fixed $y$ coordinates are symmetric circulant matrices:
\begin{align*}
  \left(B_{(i,y_1),(j,y_2)}\right)_{1\leq i,j,\leq n_1} \in \mathfrak{C}^{n_1}, && \left(B_{(x_1,i),(x_2,j)}\right)_{1\leq i,j,\leq n_2} \in \mathfrak{C}^{n_2}.
\end{align*}
The chosen ordering of the indices $(x,y) \mapsto n_2(x-1)+y$ thus results in $B\in\mathfrak{C}^{n_1}\otimes \mathfrak{C}^{n_2}$.
\end{proof}

In the case $n_1 = n_2$ we can restrict the algebra further.
\begin{lem}\label{BStructSquare}
  If $n_1 = n_2$, then
  \begin{equation}\label{CnnDef}
    B\in \mathfrak{C}^{n_1,n_1}\coloneqq\left\lbrace X\in \mathfrak{C}^{n_1}\otimes \mathfrak{C}^{n_1}\enspace \colon \enspace X_{(x_1,y_1),(x_2,y_2)} = X_{(y_1,x_1),(y_2,x_2)}\right\rbrace,
  \end{equation}
  and $\mathfrak{C}^{n_1,n_1}$ is a Jordan sub-algebra of $\mathfrak{C}^{n_1}\otimes \mathfrak{C}^{n_1}$.
\end{lem}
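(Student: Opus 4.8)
The statement splits into two claims: that $B \in \mathfrak{C}^{n_1,n_1}$ when $n_1=n_2$, and that $\mathfrak{C}^{n_1,n_1}$ is a Jordan sub-algebra of $\mathfrak{C}^{n_1}\otimes\mathfrak{C}^{n_1}$. For the membership claim, the plan is to lean on Lemma \ref{BStruct}, which already gives $B\in\mathfrak{C}^{n_1}\otimes\mathfrak{C}^{n_2}=\mathfrak{C}^{n_1}\otimes\mathfrak{C}^{n_1}$; only the extra defining relation $B_{(x_1,y_1),(x_2,y_2)}=B_{(y_1,x_1),(y_2,x_2)}$ remains. This I would read off directly from the formula for the Lee distance: when $n_1=n_2$, simultaneously swapping the two coordinates of both grid points merely interchanges the summands $\min(|x_1-x_2|,n_1-|x_1-x_2|)$ and $\min(|y_1-y_2|,n_2-|y_1-y_2|)$, so $d_{\mathrm{Lee}}$, and hence its reciprocal, is unchanged. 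Thus $B$ satisfies the relation and lies in $\mathfrak{C}^{n_1,n_1}$.

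For the second claim, I would first note that $\mathfrak{C}^{n_1,n_1}$ is a linear subspace, being the intersection of the subspace $\mathfrak{C}^{n_1}\otimes\mathfrak{C}^{n_1}$ with the linear constraints $X_{(x_1,y_1),(x_2,y_2)}=X_{(y_1,x_1),(y_2,x_2)}$. The crucial step is to reinterpret this entry-wise symmetry operationally. Let $\sigma$ be the coordinate-swap involution $\sigma(x,y)=(y,x)$ on $[n_1]\times[n_1]$, and let $P$ be the associated permutation matrix (the perfect-shuffle matrix that swaps the two Kronecker factors). Since $\sigma$ is an involution, $P$ is symmetric with $P^2=I$, and the relation $X_{i,j}=X_{\sigma(i),\sigma(j)}$ is exactly the conjugation-invariance $PXP=X$, equivalently $PX=XP$. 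Hence $\mathfrak{C}^{n_1,n_1}$ is precisely the set of elements of $\mathfrak{C}^{n_1}\otimes\mathfrak{C}^{n_1}$ fixed by the automorphism $\Phi(X)=PXP$, which by $P(A\otimes B)P=B\otimes A$ simply interchanges tensor factors and therefore maps $\mathfrak{C}^{n_1}\otimes\mathfrak{C}^{n_1}$ into itself.

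Closure under the Jordan product then follows cleanly. By Proposition \ref{SymCircProd} the algebra $\mathfrak{C}^{n_1}\otimes\mathfrak{C}^{n_1}$ is a commutative algebra of symmetric matrices closed under ordinary matrix multiplication, using $(A_1\otimes B_1)(A_2\otimes B_2)=A_1A_2\otimes B_1B_2$ together with the fact that products of symmetric circulants are symmetric circulant. Consequently, on this algebra the Jordan product $X\circ Y=\frac{1}{2}(XY+YX)$ coincides with $XY$. Since $\Phi$ is an algebra automorphism, its fixed-point set is closed under multiplication (equivalently, the commutant of $P$ is an algebra, and its intersection with $\mathfrak{C}^{n_1}\otimes\mathfrak{C}^{n_1}$ inherits closure under products). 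Either framing shows $\mathfrak{C}^{n_1,n_1}$ is closed under $\circ$, so it is a Jordan sub-algebra.

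I do not expect a genuine obstacle here; the one point demanding care is the reinterpretation in the second paragraph. Getting the conjugation bookkeeping right, and in particular verifying that $P$ swaps the Kronecker factors so that $\Phi$ preserves $\mathfrak{C}^{n_1}\otimes\mathfrak{C}^{n_1}$, is what turns both claims into short verifications; everything else is routine once the symmetry is recast as commuting with the shuffle matrix $P$.
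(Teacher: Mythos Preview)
Your proposal is correct and follows essentially the same approach as the paper: both verify $B\in\mathfrak{C}^{n_1,n_1}$ from the coordinate-swap symmetry of the Lee distance, and both establish the sub-algebra property by recognising $\mathfrak{C}^{n_1,n_1}$ as the intersection of $\mathfrak{C}^{n_1}\otimes\mathfrak{C}^{n_1}$ with the commutant of the shuffle permutation matrix $P$. Your write-up is simply more explicit about the details the paper leaves to the reader.
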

\begin{proof}
  $B$ is has this symmetry by definition of the Lee-distance. $\mathfrak{C}^{n_1,n_1}$ is a sub-algebra, because it is the restriction of an algebra to the commutant of $\{P,I\}$, where $P$ is the $n\times n$ permutation matrix switching the indices corresponding to each $(x,y)$ with the one corresponding to $(y,x)$.
\end{proof}

The other relevant Jordan algebra for our problem is described in the following proposition.
\begin{prop}\label{JnmProp}
  The subspace of $n\times n$ matrices with pattern
  \begin{center}
\begin{tikzpicture}
        \matrix [matrix of math nodes,left delimiter=(,right delimiter=)] (m)
        {
            a       & b         & \cdots    & b         & c         & \cdots    & \cdots    & c \\
            b       & \ddots    & \ddots    & \vdots    & \vdots    &           &           & \vdots \\
            \vdots  & \ddots    & \ddots    & b         & \vdots    &           &           & \vdots \\
            b       & \cdots    & b         & a         & c         & \cdots    & \cdots    & c\\
            c       & \cdots    & \cdots    & c         & d         & e         & \cdots    & e \\
            \vdots  &           &           & \vdots    & e         & \ddots    & \ddots    & \vdots \\
            \vdots  &           &           & \vdots    & \vdots    & \ddots    & \ddots    & e \\
            c       & \cdots    & \cdots    & c         & e         & \cdots    & e         & d \\
        };
        \draw[color=black, thick]  ($(m-4-1.south west)!0.5!(m-5-1.north west)$) -- ($(m-4-8.south east)!0.5!(m-5-8.north east)$);
        \draw[color=black, thick]  ($(m-8-4.south east)!0.5!(m-8-5.south west)$) -- ($(m-1-4.north east)!0.5!(m-1-5.north west)$);
        \draw[decorate,decoration = {brace}, transform canvas={xshift=-3em},thick] (m-4-1.south) -- (m-1-1.north) node[left=5pt,midway] {$m$};
        \draw[decorate,decoration = {brace}, transform canvas={xshift=-3em},thick] (m-8-1.south) -- (m-5-1.north) node[left=5pt,midway] {$n-m$};

\end{tikzpicture}
\end{center}
form a Jordan algebra, say $\mathcal{J}^{n,m}$. We call the $0/1$-basis corresponding to this pattern $J_A,J_B,J_C,J_D,J_E$.
\end{prop}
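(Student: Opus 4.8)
The plan is to recognise $\mathcal{J}^{n,m}$ as the space of symmetric matrices lying in the commutant of a group, and then to invoke the general principle that such a space is automatically a Jordan algebra. Concretely, let $G = S_m \times S_{n-m}$ act on $\mathbb{R}^n = \mathbb{R}^m \oplus \mathbb{R}^{n-m}$ by the block-diagonal permutation matrices $P = \mathrm{diag}(P_\sigma, P_\tau)$, which permute the first $m$ coordinates among themselves and the last $n-m$ coordinates among themselves. I claim that $\mathcal{J}^{n,m}$ is exactly the symmetric part of the commutant of $G$, namely $\{M \in \mathbb{S}^n : MP = PM \text{ for all } P \in G\}$.

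First I would record the abstract reason this identification suffices. The commutant $\mathcal{A} = \{M : MP = PM \ \forall P \in G\}$ is an associative matrix algebra. Since $P^T = P^{-1}$ for permutation matrices and $G$ is closed under inverses, we have $\{P^T : P \in G\} = G$, so transposing the relations $MP = PM$ shows that $M \in \mathcal{A}$ implies $M^T \in \mathcal{A}$; that is, $\mathcal{A}$ is closed under transposition. Hence for symmetric $X, Y \in \mathcal{A}$ the matrix $XY + YX$ lies in $\mathcal{A}$ and satisfies $(XY+YX)^T = YX + XY$, so it is symmetric. Therefore $X \circ Y = \tfrac{1}{2}(XY+YX)$ lies in the symmetric part of $\mathcal{A}$, i.e.\ the symmetric commutant is closed under the Jordan product and is a Jordan algebra.

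Next I would carry out the elementary identification of this symmetric commutant with the stated pattern. Writing $M$ in $2\times 2$ block form relative to the partition $[m] \cup ([n]\setminus[m])$, commutation with $\mathrm{diag}(P_\sigma, I)$ and $\mathrm{diag}(I, P_\tau)$ forces: the top-left block commutes with every $m\times m$ permutation matrix, hence equals $\alpha I_m + \beta J_m$ (in the diagonal/off-diagonal parametrisation, exactly the claimed entries $a,b$); the bottom-right block equals $\gamma I_{n-m} + \delta J_{n-m}$ (the entries $d,e$); and each off-diagonal block is fixed by left multiplication by every $P_\sigma$ and right multiplication by every $P_\tau$, which forces it to be a constant multiple of the all-ones block. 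Imposing symmetry identifies the two off-diagonal constants, leaving the single parameter $c$. This yields precisely the five-parameter pattern of the proposition with $0/1$-basis $J_A, J_B, J_C, J_D, J_E$, so $\mathcal{J}^{n,m}$ equals the symmetric commutant and is thus a Jordan algebra.

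The main obstacle is the block characterisation in the last step, specifically verifying that the off-diagonal blocks are forced to be constant. The key point is that one commutes with the \emph{full} product group $S_m \times S_{n-m}$: setting one factor to the identity shows each off-diagonal block has constant columns, and setting the other factor to the identity shows it has constant rows, and together these force a constant block, independently of whether $m = n-m$. As a self-contained alternative that avoids representation theory, one could instead use the squaring criterion recalled earlier in this section: expand a general element $aJ_A + bJ_B + cJ_C + dJ_D + eJ_E$, compute its square directly from the block description (using $J_m^2 = m J_m$ and the analogous block identities), and check that the result is again of the same form; this only requires that the products among $J_A,\ldots,J_E$ close on their span.
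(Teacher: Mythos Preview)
Your proof is correct, but the paper takes the more pedestrian route you mention only as an alternative at the end: it simply squares a generic element $aJ_A+bJ_B+cJ_C+dJ_D+eJ_E$ and writes out the resulting parameters $a',b',c',d',e'$ explicitly, thereby verifying closure under squaring directly.

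Your commutant argument is a genuinely different and more conceptual approach. By realising $\mathcal{J}^{n,m}$ as the symmetric part of the commutant of $S_m\times S_{n-m}$, you get closure under the Jordan product for free from the general fact that a transpose-closed associative algebra has a Jordan-closed symmetric part. This has the advantage of being structural and of fitting naturally with the symmetry-reduction philosophy of the surrounding section (indeed, the paper uses exactly this kind of commutant reasoning in the proof of Lemma~\ref{BStructSquare}). The paper's direct computation, on the other hand, is entirely self-contained and yields the explicit multiplication table as a by-product, which can be handy if one later needs the structure constants. Both arguments are short; yours explains \emph{why} the pattern is closed, while the paper's simply \emph{checks} that it is.
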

\begin{proof}
  A straightforward calculation shows that squaring such a matrix results in another matrix of the same pattern with parameters
  \begin{align*}
  a' &= a^2+(m-1)b^2 + (n-m) c^2, \\
  b' &= 2ab + (m-2)b^2 + (n-m) c^2,\\
  c' &= (a+(m-1)b)c + (d + (n-m-1)e)c,\\
  d' &= d^2 + (n-m-1)e^2 + m c^2,\\
  e' &= 2de + (n-m-2) e^2 + m c^2.
\end{align*}
\end{proof}

We now want to show that the space $S \coloneqq \mathfrak{C}^{n_1}\otimes \mathfrak{C}^{n_2}\otimes \mathcal{J}^{n,m}$,
 respectively $S = \mathfrak{C}^{n_1,n_1}\otimes \mathcal{J}^{n,m}$ if $n_1=n_2$, is admissible. We do this by verifying the three
 conditions listed in Theorem \ref{thm:equivalent CSICs}.

 \begin{thm}
  The subspace $S \coloneqq \mathfrak{C}^{n_1}\otimes \mathfrak{C}^{n_2}\otimes \mathcal{J}^{n,m}$, respectively $S = \mathfrak{C}^{n_1,n_1}\otimes \mathcal{J}^{n,m}$ if $n_1=n_2$ is admissible for \eqref{QAPSDP}, where $B$ and $A$ are the matrices corresponding to the problem of minimizing the energy of $m$ particles on an $n_1\times n_2$ grid.

  For $2<m<n-2$ the dimension of $S$ is $5\lceil\frac{n_1+1}{2}\rceil \lceil\frac{n_2+1}{2}\rceil$ in the case $n_1\neq n_2$, and $\frac{5}{2}\lceil\frac{n_1+1}{2}\rceil \left(\lceil\frac{n_2+1}{2}\rceil+1\right)$ in the case $n_1=n_2$.
\end{thm}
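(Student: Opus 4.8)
The plan is to verify the three admissibility conditions (a), (b), (c) of Theorem \ref{thm:equivalent CSICs} for $S$ and then to compute $\dim S$ factor by factor. The organising observation I would use is that $S$ is exactly the fixed-point space (commutant) of a finite group $\mathcal{G}$ acting by conjugation with permutation matrices: on the position indices, $\mathfrak{C}^{n_1}\otimes\mathfrak{C}^{n_2}$ is fixed by the torus translations together with the two coordinate reflections (a product of dihedral groups $D_{n_1}\times D_{n_2}$), on the particle-label indices $\mathcal{J}^{n,m}$ is fixed by $S_m\times S_{n-m}$, and when $n_1=n_2$ one additionally imposes the coordinate swap $(x,y)\mapsto(y,x)$, which cuts $\mathfrak{C}^{n_1}\otimes\mathfrak{C}^{n_1}$ down to $\mathfrak{C}^{n_1,n_1}$. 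I would also note at the outset that the tensor product of the $0/1$ partition bases of the factors (and, for $n_1=n_2$, their swap-symmetrisations) is again a $0/1$ basis with pairwise disjoint supports, so $S$ is a partition subspace and the nonnegativity of \eqref{QAPSDP} is preserved by $P_S$, as remarked before the statement.

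For condition (c) I would argue directly from the tensor structure rather than from the group. The first factor $\mathfrak{C}^{n_1}\otimes\mathfrak{C}^{n_2}$ (respectively $\mathfrak{C}^{n_1,n_1}$) is a \emph{commutative, associative} matrix algebra by Proposition \ref{SymCircProd} and Lemma \ref{BStructSquare}, whereas $\mathcal{J}^{n,m}$ is a Jordan algebra by Proposition \ref{JnmProp}. Writing a generic element of $S$ as $X=\sum_i P_i\otimes Q_i$ with each $P_i$ in the commutative factor and $Q_i\in\mathcal{J}^{n,m}$, commutativity of the $P_i$ lets me symmetrise
\begin{equation*}
X^2=\sum_{i,j}(P_iP_j)\otimes(Q_iQ_j)=\sum_{i,j}(P_iP_j)\otimes(Q_i\circ Q_j),
\end{equation*}
the second equality following by relabelling $i\leftrightarrow j$ and using $P_iP_j=P_jP_i$. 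Since $P_iP_j$ stays in the associative factor and $Q_i\circ Q_j\in\mathcal{J}^{n,m}$, we get $X^2\in S$, which is condition (c).

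Conditions (a) and (b) I would settle through the group picture. The objective matrix $C=B\otimes A$ lies in $S$, since $B$ lies in the position factor by Lemma \ref{BStruct} (or Lemma \ref{BStructSquare}) and $A=J_A+J_B\in\mathcal{J}^{n,m}$. The affine feasible set of \eqref{QAPSDP} is $\mathcal{G}$-invariant and nonempty, so averaging any feasible point over $\mathcal{G}$ yields a $\mathcal{G}$-invariant $X_0\in S$; this replacement is harmless because it does not change the reduced optimal value. The subspace $\mathcal{L}$ is $\mathcal{G}$-invariant because $\mathcal{G}$ permutes the constraint matrices $\{I_n\otimes E_{jj}\}$, $\{E_{jj}\otimes I_n\}$, $T$ and $J_{n^2}$ among themselves, so the Reynolds projection $P_S$ commutes with $P_\mathcal{L}$. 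This commutation gives $P_\mathcal{L}(S)\subseteq S$ at once (condition (b)), and, since $C,X_0\in S$, also $P_\mathcal{L}(C)\in S$ and $X_{0,\mathcal{L}^\perp}=(I-P_\mathcal{L})(X_0)\in S$ (condition (a)). I expect this to be the main obstacle: one has to pin down $\mathcal{G}$ precisely, confirm that its fixed space is \emph{exactly} $S$, and check the invariance of $\mathcal{L}$; the alternative of computing $P_{\mathcal{L}^\perp}$ directly on the $0/1$ basis of $S$ is elementary but tedious.

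For the dimension count, $\dim S$ is the product of the dimensions of the tensor factors. By Proposition \ref{SymmCirculantProp} each symmetric-circulant factor has dimension $\lfloor n_i/2\rfloor+1=\lceil\frac{n_i+1}{2}\rceil$, and the $0/1$ basis $J_A,\dots,J_E$ of $\mathcal{J}^{n,m}$ has five members with nonempty, pairwise disjoint supports precisely when $2\le m\le n-2$, which holds under the hypothesis $2<m<n-2$; hence $\dim\mathcal{J}^{n,m}=5$. For $n_1\ne n_2$ this gives $\dim S=5\lceil\frac{n_1+1}{2}\rceil\lceil\frac{n_2+1}{2}\rceil$. For $n_1=n_2$, conjugation by the coordinate swap acts on $\mathfrak{C}^{n_1}\otimes\mathfrak{C}^{n_1}$ by interchanging the two tensor factors, so $\mathfrak{C}^{n_1,n_1}$ is the corresponding symmetric subspace; writing $t\coloneqq\lceil\frac{n_1+1}{2}\rceil$ it has dimension $\binom{t}{2}+t=\tfrac12 t(t+1)$, and multiplying by $5$ yields $\dim S=\tfrac52\lceil\frac{n_1+1}{2}\rceil\big(\lceil\frac{n_2+1}{2}\rceil+1\big)$, as claimed.
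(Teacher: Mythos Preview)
Your proposal is correct and takes a genuinely different route from the paper. The paper verifies the three admissibility conditions by direct inspection: it checks explicitly that $T$ and $J_{n^2}$ lie in $S$, observes that the remaining constraint matrices $I_n\otimes E_{jj}$ and $E_{jj}\otimes I_n$ overlap only with the two basis elements $C^{n_1}_0\otimes C^{n_2}_0\otimes J_A$ and $C^{n_1}_0\otimes C^{n_2}_0\otimes J_D$ (which they annihilate), and then computes $X_{0,\mathcal{L}^\perp}$ explicitly from $X_0=\mathrm{vec}(I_n)\mathrm{vec}(I_n)^T$ by working out the orthogonal decomposition of $\mathcal{L}^\perp$ into $T$, $J_{n^2}-I_{n^2}-T$, and the diagonal part. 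You instead identify $S$ as the fixed space of $\mathcal{G}=(D_{n_1}\times D_{n_2})\times(S_m\times S_{n-m})$ (augmented by the coordinate swap when $n_1=n_2$), note that $\mathcal{G}$ permutes the constraint matrices among themselves so that the Reynolds projection $P_S$ commutes with $P_{\mathcal{L}}$, and replace the paper's specific $X_0$ by a group-averaged feasible point. The trade-off: your argument is cleaner and immediately generalises to any QAP whose data are invariant under a permutation group acting separately on the two index factors, but it front-loads the work into verifying that the fixed space is \emph{exactly} $S$ (in particular, that one needs the full dihedral reflections, not just the cyclic shifts, to cut down to $\mathfrak{C}^{n_1}\otimes\mathfrak{C}^{n_2}$ inside $\mathbb{S}^{n}$). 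The paper's computation is more pedestrian but entirely self-contained and does not invoke any group machinery. Your explicit argument for condition~(c), exploiting commutativity of the circulant factor to symmetrise $X^2$ into Jordan products, is in fact more detailed than the paper's, which simply asserts that $S$ is a Jordan algebra; the group picture would also give this for free, since fixed spaces of conjugation actions are automatically closed under squaring. The dimension count is handled the same way in both.
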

\begin{proof}
We first show that $P_\mathcal{L}(S)\subseteq S$.
 To this end, note that both $T=I_n\otimes (J_n-I_n)+(J_n-I_n)\otimes I_n$ and $J_{n^2}$ are elements of $S$, since
  $J_n = J_{n_1}\otimes J_{n_2}$ and $I_n = I_{n_1}\otimes I_{n_2}$ are both in $\mathfrak{C}^{n_1}\otimes \mathfrak{C}^{n_2}$ (and in $\mathfrak{C}^{n_1,n_1}$ if $n_1=n_2$),
  as well as in $\mathcal{J}^{n,m}$ because $I_n = J_A+J_D$ and $J_n = I_n + J_B+J_C+J_E$. Thus
  $T$ and $J_{n^2}$ can be written as linear combination of Kronecker products of elements
  of $\mathfrak{C}^{n_1}$, $\mathfrak{C}^{n_2}$ and $\mathcal{J}^{n,m}$, and are as such elements of $S$.

  The other two constraints are given by matrices $I_n\otimes E_{jj}$ and $E_{jj}\otimes I_n$, which only overlap with the two basis elements $C^{n_1}_0\otimes C^{n_2}_0\otimes J_A$ and $C^{n_1}_0\otimes C^{n_2}_0\otimes J_D$. Since $C^{n_1}_0\otimes C^{n_2}_0\otimes J_A = \sum_{j=1}^{m}I_n\otimes E_{jj}$ and  $C^{n_1}_0\otimes C^{n_2}_0\otimes J_D = \sum_{j=m+1}^{n}I_n\otimes E_{jj}$, both of these matrices are projected to zero.

  Thus all basis elements of $S$ are sent to elements of $S$, and $P_\mathcal{L}(S)\subseteq S$.

  Next, we show $\mathbf{C}_\mathcal{L} \in S$.
  By Lemma \ref{BStruct}, Lemma \ref{BStructSquare} and the definition of $A$, we know that $C=B\otimes A\in S$. Since $P_\mathcal{L}(S)\subseteq S$, that $\mathbf{C}_\mathcal{L} \in S$ as well.

  Next, we show that $X_{0,\mathcal{L}^\perp}\in S$. {\color{black}To project $X_0=\mathrm{vec}(I_n)\mathrm{vec}(I_n)^T$ onto $\mathcal{L}^\perp$, the span of the constraint
       matrices, we first notice only two of them have nonzero entries outside of the diagonal,
       the all one matrix $J_{n^2}$, and the matrix  $I_n\otimes (J_n-I_n)+(J_n-I_n)\otimes I_n$,
       which we will call $T$ from now on. The matrices $I_n\otimes E_{jj}$ for $j=1,\ldots,n$ sum to the identity matrix $I_{n^2}$,
        which means that we can easily find an orthogonal basis of the off diagonal part of $\mathcal{L}^\perp$:
     \begin{align*}
       B_1 &= T, \\
       B_2 &= J_{n^2} - I_{n^2} - T.
     \end{align*}
     Since $\langle T,X_0\rangle =0$ and $\langle J_{n^2},X_0\rangle = n^2$ we get
     \begin{equation*}
       \langle B_2, X_0\rangle = \langle J_{n^2},X_0\rangle - \langle I_{n^2},X_0\rangle = n^2 - n.
     \end{equation*}
     Hence the off-diagonal part of $X_{0,\mathcal{L}^\perp}$ is the matrix
     \begin{equation*}
       \frac{\langle B_2,X_0\rangle}{\langle B_2,B_2\rangle} B_2 = \frac{n^2-n}{n^4-n^2 - 2n(n^2-n)}B_2=\frac{1}{n^2-n}B_2.
     \end{equation*}
     The diagonal part of $X_{0,\mathcal{L}^\perp}$ is the matrix $\frac{1}{n} I_{n^2}$, since
     \begin{align*}
       \langle E_{jj}\otimes I_n, \frac{1}{n} I_{n^2} - X_0\rangle &= \langle E_{jj}\otimes I_n, \frac{1}{n} I_{n^2}\rangle -\langle E_{jj}\otimes I_n, X_0\rangle = 1-1=0,
     \end{align*}
     and analogously $\langle I_n\otimes E_{jj} , \frac{1}{n} I_{n^2} - X_0\rangle = 0$. Combining the two parts we see
     \begin{equation*}
       X_{0,\mathcal{L}^\perp} = \frac{1}{n^2-n}B_2 + \frac{1}{n} I_{n^2}.
     \end{equation*}
  }
   Since $J_{n^2}$, $I_{n^2}$ and $T$ are elements of $S$, we get that $X_{0,\mathcal{L}^\perp}\in S$.

  Finally, we note that $S$ is a Jordan algebra. This completes the proof that $S$ is admissible. The dimension of $S$ follows from $\mathfrak{C}^n$ having dimension $\lceil\frac{n+1}{2}\rceil$
  and $\mathcal{J}^{n,m}$ having dimension $5$. In the case $n_1=n_2$ the dimension is lower, since we can combine the basis elements $C^{n_1}_i\otimes C^{n_1}_j$ and $C^{n_1}_j\otimes C^{n_1}_i$ for each pair $i\neq j$.
\end{proof}

Thus we have found an admissible subspace $S$ for \eqref{QAPSDP}, where $A$ and $B$ are the matrices corresponding to the problem of minimizing the energy of $m$ particles on an $n_1\times n_2$ toric grid. Its dimension is of order $\mathcal{O}(n_1n_2)$, which is significantly
 less than the original number of variables $\frac{n_1^4n_2^4+n_1^2n_2^2}{2}=\mathcal{O}(n_1^4n_2^4)$.
 The number of variables can be reduced further by fixing the variables corresponding to nonzero entries of $I_n\otimes (J_n-I_n)+(J_n-I_n)\otimes I_n$ to zero.
 Thus if $\{B_1,\ldots,B_k\}$ is a $0/1$-basis of an admissible subspace, then it is enough to optimize over variables in the subspace $S_0$ with basis
\begin{equation*}
  \left\lbrace B_i\enspace\colon\enspace \langle B_i,I_n\otimes (J_n-I_n)+(J_n-I_n)\otimes I_n\rangle = 0 \right\rbrace.
\end{equation*}

This results in $3\lceil\frac{n_1+1}{2}\rceil \lceil\frac{n_2+1}{2}\rceil-1$ variables in the case $n_1\neq n_2$, and $1.5\lceil\frac{n_1+1}{2}\rceil \left(\lceil\frac{n_2+1}{2}\rceil+1\right)-1$ variables in the case $n_1=n_2$. A few examples can be seen in Table \ref{ReducedDimTable}. Note that the resulting subspace is generally not a Jordan algebra anymore.

\begin{table}[htbp] \centering
 \caption{Number of variables before and after symmetry reduction.}
 \label{DimensionTable}
\begin{tabular}{ccHcc}
\toprule
$(n_1,n_2)$&$\mathrm{dim}({\mathbb{S}}^{n_1^2n_2^2})$&Before (no zeroes)&$\mathrm{dim}(S)$&$\mathrm{dim}(S_0)$\\
\midrule
$(4, 4)$& 32896& 29056& 30& 17\\
$(5, 5)$& 195625& 180625& 30& 17\\
$(6, 6)$& 840456& 795096& 50& 29\\
$(8, 8)$& 8390656&8132608& 75& 44\\
$(10, 10)$& 50005000&49015000& 105& 62\\
$(12, 12)$& 215001216& 212035968& 140& 83\\
$(24, 24)$& 55037822976& 54847051776& 455& 272\\
$(100, 100)$& $\approx 5\cdot 10^{15}$& $\approx 5\cdot 10^{15}$& 6630& 3977\\
$(1000, 1000)$& $\approx 5\cdot 10^{23}$ & $\approx 5\cdot 10^{23}$ & 628755& 377252\\
\midrule
$(6, 5)$& 405450& 379350& 60& 35\\
$(10, 5)$& 3126250& 3003750& 90& 53\\
$(24, 12)$& 3439895040& 3416090112& 455& 272\\
\bottomrule
\end{tabular}
\label{ReducedDimTable}
\end{table}

\subsection{Block diagonalization}
\label{sec:Block diagonalization}
We now want to block diagonalize the admissible subspace $S \coloneqq \mathfrak{C}^{n_1}\otimes \mathfrak{C}^{n_2}\otimes \mathcal{J}^{n,m}$, respectively
$S = \mathfrak{C}^{n_1,n_1}\otimes \mathcal{J}^{n,m}$ if $n_1=n_2$. We do this by making use of the fact that $S$ is a tensor product of algebras, which allows us to block diagonalize each part on its own.

\begin{lem}[See, for example, \cite{gray2006toeplitz}, \cite{de2008semidefinite}]
  The $0/1$-basis $\{C_i^n\enspace\colon\enspace i=0,\ldots, \lfloor \frac{n}{2}\rfloor\}$ of $\mathfrak{C}^n$ has a common set of eigenvectors, given by the columns of the discrete Fourier transform matrix:
  \begin{equation*}
    Q^n_{ij}\coloneqq \frac{1}{\sqrt{n}}e^{-2\pi\sqrt{-1}ij/n},\qquad i,j=0,\ldots, n-1.
  \end{equation*}
  The eigenvalues are
  \begin{equation*}
    \lambda_m(C_k^n) = 2\cos(2\pi mk/n),\qquad m=0,\ldots, n-1, k=0,\ldots, \lfloor \frac{n}{2}\rfloor,
  \end{equation*}
  and note that
  \begin{equation*}
    \lambda_m(C_k^n) = \lambda_{n-m}(C_k^n), \qquad m=1,\ldots,\lfloor \frac{n}{2}\rfloor, k=0,\ldots, \lfloor \frac{n}{2}\rfloor.
  \end{equation*}
\end{lem}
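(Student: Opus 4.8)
The plan is to reduce the simultaneous diagonalization of the entire basis to that of a single matrix, the cyclic shift $P \in \mathbb{R}^{n \times n}$ defined (with rows and columns indexed by $0,\ldots,n-1$) by $P_{ij}=1$ if $j \equiv i+1 \pmod n$ and $P_{ij}=0$ otherwise. First I would record the identity
\begin{equation*}
  C_k^n = P^k + P^{n-k}, \qquad k = 0,\ldots,\lfloor \tfrac{n}{2}\rfloor,
\end{equation*}
where $(P^k)_{ij}=1$ exactly when $j-i \equiv k \pmod n$. This is immediate from $C_k^n = \mathrm{circ}_n(d_k)$: the choice $d_k = e_k$ places ones precisely on the offsets $\pm k$, while the choice $d_i = 2e_i$ at the self-paired indices $i \in \{0, n/2\}$ is exactly what makes this single formula valid there too, since for those $k$ one has $P^k = P^{n-k}$ and hence $P^k + P^{n-k} = 2P^k$. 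In particular every $C_k^n$ is a polynomial in $P$, so it suffices to diagonalize $P$ and read off the eigenvalue of each $C_k^n$ from the corresponding polynomial in the eigenvalues of $P$.

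Next I would diagonalize $P$ by the discrete Fourier transform. Writing $\omega = e^{-2\pi\sqrt{-1}/n}$ and letting $q_m$ denote the $m$-th column of $Q^n$, so that $(q_m)_i = \tfrac{1}{\sqrt{n}}\omega^{im}$, a one-line computation gives
\begin{equation*}
  (P q_m)_i = (q_m)_{(i+1) \bmod n} = \omega^{m}(q_m)_i,
\end{equation*}
using $\omega^n = 1$ to handle the wrap-around at $i=n-1$; thus $q_m$ is an eigenvector of $P$ with eigenvalue $\omega^m$ for every $m=0,\ldots,n-1$. A standard geometric-sum computation shows the $q_m$ are orthonormal, i.e.\ $Q^n$ is unitary, so these $n$ eigenvectors form a common eigenbasis. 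Applying the identity of the first step, each $C_k^n$ shares this eigenbasis and
\begin{equation*}
  C_k^n q_m = (\omega^{mk} + \omega^{-mk})\, q_m = 2\cos(2\pi mk/n)\, q_m,
\end{equation*}
which is exactly the claimed eigenvalue $\lambda_m(C_k^n) = 2\cos(2\pi mk/n)$.

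Finally I would check the boundary cases and the stated symmetry. For $k=0$ the identity gives $C_0^n = 2I$ with eigenvalue $2 = 2\cos(0)$, and for $k=n/2$ (with $n$ even) it gives $C_{n/2}^n = 2P^{n/2}$ with eigenvalue $2\omega^{mn/2} = 2(-1)^m = 2\cos(\pi m)$, so the uniform formula covers these as well. The symmetry $\lambda_m(C_k^n) = \lambda_{n-m}(C_k^n)$ then follows from evenness and periodicity of cosine, $2\cos(2\pi(n-m)k/n) = 2\cos(2\pi k - 2\pi mk/n) = 2\cos(2\pi mk/n)$; equivalently it reflects that $q_m$ and $q_{n-m}$ are complex-conjugate eigenvectors of the real symmetric matrix $C_k^n$ and so share an eigenvalue, a fact that later permits recombining the pair into real eigenvectors for a real block-diagonalization. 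Since this is a classical fact about circulants, there is no genuine obstacle; the only points needing care are the index bookkeeping modulo $n$ and the factor-$2$ normalization at the self-paired offsets $k \in \{0, n/2\}$, both of which are absorbed into the single identity $C_k^n = P^k + P^{n-k}$ established at the outset.
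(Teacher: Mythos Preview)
Your proof is correct and is precisely the standard argument for diagonalizing circulant matrices via the DFT: express each $C_k^n$ as $P^k + P^{n-k}$ in the cyclic shift $P$, diagonalize $P$ by the Fourier vectors $q_m$ with eigenvalues $\omega^m$, and read off $\lambda_m(C_k^n) = \omega^{mk} + \omega^{-mk} = 2\cos(2\pi mk/n)$. The handling of the self-paired offsets $k \in \{0, n/2\}$ via the doubled $d_k = 2e_k$ is exactly right and is what makes the eigenvalue formula uniform.

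There is nothing to compare against in the paper itself: the lemma is stated with citations to \cite{gray2006toeplitz} and \cite{de2008semidefinite} and no proof is given, as the result is classical. Your argument is the one a reader would find in those references.
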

Thus we can block diagonalize $\mathfrak{C}^n$ by sending $C_k^n$ to the vector
\begin{equation*}
  \hat{\lambda}(C_k^n)\coloneqq (\lambda_0(C_k^n),\ldots, \lambda_{\lfloor \frac{n}{2}\rfloor}(C_k^n)).
\end{equation*}

To block diagonalize $\mathcal{J}^{n,m}$, one may use the Jordan isomorphism $\phi:\mathcal{J}^{n,m} \rightarrow \mathbb{R} \oplus \mathbb{R} \oplus \mathbb{S}^2$ given by
\[
\def\matr#1{
\left(
\begin{smallmatrix}
#1
\end{smallmatrix}
\right)}
\phi(J_A) = \matr{
n-m\\
&0\\
&&1&0\\
&&0&0\\
},
\; \phi(J_B) = \matr{
-1\\
&0\\
&&m-1&0\\
&&0&0\\
},
\; \phi(J_C) = \sqrt{m(n-m)} \matr{
0\\
&0\\
&&0&1\\
&&1&0\\
},
\]
\[
\def\matr#1{
\left(
\begin{smallmatrix}
#1
\end{smallmatrix}
\right)}
\phi(J_D) = \matr{
0\\
&1\\
&&0&0\\
&&0&1\\
},
\; \phi(J_E) = \matr{
0\\
&-1\\
&&0&0\\
&&0&n-m-1\\
}.
\]
This isomorphism was used implicitly in \cite{KOP}, but may also be verified directly by
confirming that  $\phi(X^2) = [\phi(X)]^2$ for all $X \in \mathcal{J}^{n,m}$.

We can now combine these block diagonalizations by noticing that it is enough to block diagonalize each of the algebras separately; see, for example, Section 7.2. in \cite{de2010exploiting}.
We obtain the final reduction shown in the next theorem. The proof is omitted since it is straightforward: {\color{black} One just has to calculate the inner products between the basis elements of the algebra and the data matrices, and eliminate variables fixed to zero by $\langle B_i,I_n\otimes (J_n-I_n)+(J_n-I_n)\otimes I_n\rangle = 0$. We then further scaled some variables and matrices to simplify terms further.

In the following we use the constants
\begin{equation*}
 d^{kl}_{ij}\coloneqq \cos\left(\frac{2\pi ki}{n_1}\right)\cos\left(\frac{2\pi lj}{n_2}\right),
\end{equation*}
which arise from the  diagonalization of the circulant matrices.
}

\begin{thm}\label{ReducedSDP}

The bound from \eqref{QAPSDP}, where the matrices $A,B$ correspond to the energy minimization problem with parameters $n_1$, $n_2$, $n=n_1n_2$ and $m$, equals the optimal value of the following semidefinite program:

\begin{align}
      \inf\enspace & n\sum_{i+j>0}\frac{y^{b \to b}_{ij}}{i+j}\label{ReducedSDPPrimal}\\
       \mathrm{s.t.}\enspace 
       &\sum_{i+j>0}\left(y^{b\to b}_{ij}+y^{w\to w}_{ij} + y^{b\leftrightarrow w}_{ij}\right)=n-1,\label{RedSDPSum}\\
       &\text{for all } 0\leq k\leq \left\lfloor\frac{n_1}{2}\right\rfloor, 0\leq l\leq \left\lfloor\frac{n_2}{2}\right\rfloor\colon\\
       &\quad \begin{pmatrix}
               \frac{m}{n} & 0 \\
               0 & \frac{n-m}{n}
             \end{pmatrix} + \sum_{i+j>0}d^{kl}_{ij}\begin{pmatrix}
               y^{b \to b}_{ij} & \frac{1}{2} y^{b\leftrightarrow w}_{ij} \\
               \frac{1}{2}y^{b\leftrightarrow w}_{ij} & y^{w \to w}_{ij}
             \end{pmatrix}\succcurlyeq 0,\label{RedSDPPSD}\\
       &\quad \frac{m(m-1)}{n}-\sum_{i+j>0}d^{kl}_{ij}y^{b \to b}_{ij}\geq 0,\label{RedSDPB}\\
       &\quad \frac{(n-m)(n-m-1)}{n}-\sum_{i+j>0}d^{kl}_{ij}y^{w \to w}_{ij}\geq 0\label{RedSDPW},\\
       &y_{ij}^{b\to b},y_{ij}^{w\to w},y_{ij}^{b\leftrightarrow w}  \ge 0 \; \forall (i,j) \enspace | \enspace i+j>0, \enspace i\in\left\{0,\ldots,\left\lfloor\frac{n_1}{2}\right\rfloor\right\}, \enspace j\in\left\{0,\ldots,\left\lfloor\frac{n_2}{2}\right\rfloor\right\}.
\end{align}
\end{thm}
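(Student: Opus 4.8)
The plan is to start from the admissibility result and turn the restricted program into coordinates, and then block-diagonalize. Concretely, Theorem~\ref{thm:equivalent CSICs} guarantees that replacing the feasible set of \eqref{QAPSDP} by its intersection with $S = \mathfrak{C}^{n_1}\otimes\mathfrak{C}^{n_2}\otimes\mathcal{J}^{n,m}$ (or $S=\mathfrak{C}^{n_1,n_1}\otimes\mathcal{J}^{n,m}$ when $n_1=n_2$) leaves the optimal value unchanged; since $S$ is a partition subspace, the doubly-nonnegativity constraint splits cleanly into $Y\succcurlyeq 0$ together with nonnegativity of the coordinates of $Y$ in the $0/1$-basis. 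So the first step is to write a generic feasible $Y\in S$ in the basis $\{C_i^{n_1}\otimes C_j^{n_2}\otimes J_X : X\in\{A,B,C,D,E\}\}$ and name its coordinates; the nonnegativity of the coordinates attached to $J_B, J_E, J_C$ will become the constraints $y^{b\to b}_{ij}, y^{w\to w}_{ij}, y^{b\leftrightarrow w}_{ij}\ge 0$.

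Next I would translate each linear constraint of \eqref{QAPSDP} into a statement about these coordinates. The two families of diagonal equalities $\langle I_n\otimes E_{jj},Y\rangle=1$ and $\langle E_{jj}\otimes I_n,Y\rangle=1$ pin down the coordinates of $C_0^{n_1}\otimes C_0^{n_2}\otimes J_A$ and $C_0^{n_1}\otimes C_0^{n_2}\otimes J_D$ (these are the only basis elements meeting the diagonal of $Y$, as already observed in the admissibility proof), which after passing through $\phi$ supply the constant matrix $\mathrm{diag}(m/n,(n-m)/n)$ in \eqref{RedSDPPSD} and the constants $m(m-1)/n$ and $(n-m)(n-m-1)/n$ in \eqref{RedSDPB}--\eqref{RedSDPW}. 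The constraint $\langle T,Y\rangle=0$ forces to zero every coordinate whose basis element is supported on $T=I_n\otimes(J_n-I_n)+(J_n-I_n)\otimes I_n$, which is exactly the elimination step that removes the remaining $J_A$/$J_D$ coordinates and restricts the index set to $i+j>0$; and $\langle J_{n^2},Y\rangle=n^2$ collapses to the single scalar equation \eqref{RedSDPSum}. The objective is handled the same way: by Lemma~\ref{BStruct} one has $B=\sum_{i+j>0}\tfrac{1}{i+j}\,C_i^{n_1}\otimes C_j^{n_2}$ in the (scaled) circulant basis, because the Lee distance of a pair with reduced offsets $(i,j)$ is $i+j$ and $b_{ii}=0$, while $A=J_A+J_B$; expanding $\langle B\otimes A,Y\rangle$ and using orthogonality of the $0/1$-basis isolates the $J_B$-coordinates and yields $n\sum_{i+j>0}y^{b\to b}_{ij}/(i+j)$, matching \eqref{ReducedSDPPrimal}.

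The heart of the argument is block-diagonalizing the cone $S\cap S^{n^2}_+$. Here I would invoke the joint diagonalization of $\mathfrak{C}^{n}$ by the discrete Fourier matrices $Q^{n_1}, Q^{n_2}$ together with the Jordan isomorphism $\phi:\mathcal{J}^{n,m}\to\mathbb{R}\oplus\mathbb{R}\oplus\mathbb{S}^2$, and the fact that a tensor product of algebras may be block-diagonalized factor by factor. Under the resulting isomorphism, $S$ maps to a direct sum over frequency pairs $(k,l)$ with $0\le k\le\lfloor n_1/2\rfloor$ and $0\le l\le\lfloor n_2/2\rfloor$ (the range restriction coming from $\lambda_m(C_k^n)=\lambda_{n-m}(C_k^n)$), and in block $(k,l)$ the generator $C_i^{n_1}\otimes C_j^{n_2}$ acts by a scalar proportional to $\lambda_k(C_i^{n_1})\lambda_l(C_j^{n_2})=4\,d^{kl}_{ij}$, while $J_A,\dots,J_E$ act through $\phi$. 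Thus $Y\succcurlyeq 0$ becomes the conjunction, over all $(k,l)$, of one $2\times 2$ semidefinite condition (the $\mathbb{S}^2$ summand of $\phi$, giving \eqref{RedSDPPSD}) and two scalar inequalities (the two $\mathbb{R}$ summands, giving \eqref{RedSDPB} and \eqref{RedSDPW}). Assembling the previous steps with this block form reproduces \eqref{ReducedSDPPrimal}--\eqref{RedSDPW} exactly, and admissibility then equates it with $SDPQAP(A,B)$.

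The main obstacle is not conceptual but bookkeeping: every constant in the target program is the product of a $0/1$-basis normalization (the factor-$2$ convention on $C_0$ and $C_{n/2}$ that makes the eigenvalues $2\cos(2\pi mk/n)$ uniform), a fixed coordinate coming from the diagonal and trace constraints, and an entry of $\phi(J_X)$. For instance the off-diagonal $\tfrac12 y^{b\leftrightarrow w}_{ij}$ absorbs the factor $\sqrt{m(n-m)}$ from $\phi(J_C)$, and the minus signs in \eqref{RedSDPB}--\eqref{RedSDPW} come from the scalar parts $-1$ of $\phi(J_B)$ and $\phi(J_E)$. Keeping all of these scalings consistent, and choosing the rescalings of $y^{b\to b}, y^{w\to w}, y^{b\leftrightarrow w}$ that clear them, is the delicate part, and is presumably why the authors call the computation straightforward yet omit it.
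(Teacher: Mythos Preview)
Your proposal is correct and follows essentially the same route the paper sketches: restrict \eqref{QAPSDP} to the admissible partition subspace $S$, express $Y$ in the $0/1$-basis $\{C_i^{n_1}\otimes C_j^{n_2}\otimes J_X\}$, read off the linear constraints and the objective as inner products with the data matrices, eliminate the coordinates killed by the $T$-constraint, block-diagonalize via the discrete Fourier transform on the circulant factors and the isomorphism $\phi$ on $\mathcal{J}^{n,m}$, and finally absorb the various normalizations into rescaled variables. The paper explicitly omits the proof as ``straightforward'' computation along exactly these lines, and your outline (including the identification of where each constant and sign originates) matches that description.
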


{\color{black} We can interpret the variables as averaged occurrences of pairs of black points ($b \to b$), pairs of white points ($w\to w$),
and pairs of a white and a black point ($b\to w$), at distance $(i,j)$.
I.e.\ if we have a given configuration, we can count how many pairs of points at distance $(i,j)$ are both black,
 and then set $y^{b\to b}_{ij}$ to this value divided by the total number of pairs, to construct a feasible solution.
  Variables corresponding to distances bigger than $\left\lfloor\frac{n_1}{2}\right\rfloor$ respectively
   $\left\lfloor\frac{n_2}{2}\right\rfloor$ are looped around and added onto smaller distance variables,
   i.e.\ we may have variable values bigger than one.}

{\color{black}
 The semidefinite program in Theorem \ref{ReducedSDP} has block sizes of order at most $2\times 2$,  and is therefore
 a second-order cone program, which can be solved very efficiently; see e.g.\ \cite{Lobo_SOCP}.
Thus we were able to solve the SDP relaxation for toric grids
of sizes up to $100 \times 100$. Subsequently we were also able to prove optimality of certain configurations of particles
on toric grids, as detailed in the next section.}

\section{Numerical results}
\label{sec:numerical results}

Here we compare the eigenvalue-bound with the SDP-bound for the energy minimization problems. The upper bounds were found using simulated annealing {\color{black} (see
Algorithm \ref{alg:SimAnnAlg}).

\begin{algorithm}[H]
  \caption{Simulated annealing algorithm}
  \label{alg:SimAnnAlg}
    $iter \gets$ number of iterations to perform \\
    $P \gets $ random configuration of $m$ particles\\
    $val \gets $ energy of P $ = E(P)$\\
    $T \gets 1$\\
    $a \gets \sqrt[iter]{\frac{1}{iter}}$\\
    \For{$it\leftarrow 1$ \KwTo iterations}{
        $T \gets aT$\\
        $P' \gets P$\\
        Move a random particle of $P'$ to a neighbouring position\\

        \If{$E(P') < val$ or $\exp(-(E(P') - val) / T)\geq \mathrm{rand}(0,1)$}{
            $val \gets E(P')$\\
            $P\gets P'$
        }
    }
    \Return{$val$}
\end{algorithm}
}

Calculating the SDP-bound directly is prohibitively slow, which is why we symmetry reduced the problems first, as described in {\color{black} Section \ref{sec:torusQAPreduce}.
 After this reduction we can calculate these bounds very efficiently,
 solving a case of the problem on a $10\times 10$-grid in 0.2s, 1.3s on a $50\times 50$-grid, and in about 40s on $100\times 100$-grid, using Mosek on a 4-core
  3.4GHz Processor.}
In \cite{bouman2013energy} Bouman, Draisma and van Leeuwaarden prove optimality for the checkerboard arrangement in the cases that $n_1=n_2$ are even, and $m=\frac{n_1n_2}{2}$. This can be seen for the grid sizes we checked as well, but we do get some more proofs of optimality.

If one of the bounds is sharp, then we get a proof of optimality for these parameters. Furthermore, we can even prove optimality in some cases even if the bound is not completely sharp, as explained in the following result.
\begin{prop}\label{RoundErrorNotImportant}
  Let $V=\{v_1,\ldots,v_k\}$ be the set of unique entries of the matrix of potentials $B$. The one-dimensional shortest-vector-problem for these values is
  \begin{align*}
    r = \min \enspace &\vert s-t\vert \\
    \mathrm{s.t. } \enspace& s,t \in \left\lbrace x = 2\sum_{i=1}^{k}\alpha_i v_i \enspace\vert\enspace \alpha_i \in \mathbb{Z} \right\rbrace\\
    & s\neq t.
  \end{align*}
  If a feasible solution of the QAP has objective value that differs by less than $r$ from the SDP lower bound, then this feasible solution is optimal.
\end{prop}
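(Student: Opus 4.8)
The plan is to argue that the objective value of any feasible QAP solution lies in a discrete set whose gaps are all at least $r$, so that being within $r$ of the lower bound forces equality with the optimum. The key observation is that for any subset $T \subseteq [n]$ with $|T| = m$, the objective value $\sum_{a,b \in T} f_{a,b}$ is an integer combination of the entries of $B$. Specifically, each unordered pair $\{a,b\}$ contributes $2 f_{a,b} = 2 b_{ab}$ to the sum (counting both orderings), and since $f_{a,b} \in V = \{v_1, \ldots, v_k\}$, the total energy can be written as $2\sum_{i=1}^k \alpha_i v_i$ where $\alpha_i \in \mathbb{Z}_{\geq 0}$ counts the number of unordered pairs in $T$ realizing the potential value $v_i$. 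Hence every feasible QAP objective value lies in the lattice
\begin{equation*}
  \Lambda \coloneqq \left\{ x = 2\sum_{i=1}^k \alpha_i v_i \enspace\middle|\enspace \alpha_i \in \mathbb{Z} \right\},
\end{equation*}
which is precisely the set over which $r$ is defined.

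Next I would use the defining property of $r$: it is the minimum distance between two distinct points of $\Lambda$. Since $\Lambda$ is closed under subtraction (it is a subgroup of $\mathbb{R}$ under addition), this minimum gap is attained between any two lattice points, so no two distinct feasible objective values can differ by less than $r$. In particular, the optimal QAP value $\mathrm{OPT}$ and the value of any non-optimal feasible solution differ by at least $r$.

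Now suppose we have a feasible solution with objective value $\mathrm{val}$ satisfying $\mathrm{val} - \mathrm{SDP} < r$, where $\mathrm{SDP} = SDPQAP(A,B)$ is the lower bound. By Theorem \ref{SDPBetterProj} we have $\mathrm{SDP} \leq \mathrm{OPT} \leq \mathrm{val}$, and since both $\mathrm{OPT}$ and $\mathrm{val}$ are feasible objective values, they lie in $\Lambda$. From $\mathrm{OPT} \leq \mathrm{val}$ and $\mathrm{val} - \mathrm{OPT} \leq \mathrm{val} - \mathrm{SDP} < r$, the gap between these two lattice points is strictly less than $r$. By the minimality of $r$, this is only possible if $\mathrm{OPT} = \mathrm{val}$, proving that the feasible solution is optimal.

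**The main obstacle** I anticipate is purely in setting up the lattice argument carefully: one must verify that $\Lambda$ is genuinely a discrete subgroup (equivalently, that the $v_i$ generate a lattice rather than a dense subgroup of $\mathbb{R}$) so that a positive minimum gap $r$ actually exists. Since the $v_i$ are rational (inverses of integer Lee distances), the group they generate is contained in $\frac{1}{L}\mathbb{Z}$ for $L$ the least common multiple of the distances, hence discrete, and $r > 0$ is well-defined. The rest of the argument is essentially a one-line consequence of the subgroup structure; the subtlety is only in confirming this discreteness, after which everything follows from the definition of $r$ and the chain $\mathrm{SDP} \leq \mathrm{OPT} \leq \mathrm{val}$.
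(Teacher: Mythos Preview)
Your argument is correct and follows essentially the same approach as the paper: show that every feasible objective value lies in the additive group $\Lambda = \{2\sum_i \alpha_i v_i : \alpha_i \in \mathbb{Z}\}$, so that distinct feasible values are at least $r$ apart, and then conclude from $\mathrm{SDP} \leq \mathrm{OPT} \leq \mathrm{val}$ with $\mathrm{val} - \mathrm{SDP} < r$. Your version is more carefully spelled out than the paper's (which dispatches the whole thing in two sentences), and your remark that the $v_i$ are rational---hence $\Lambda$ is discrete and $r>0$ is well-defined---is a point the paper leaves implicit.
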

\begin{proof}
  Both the matrix $A$ and the optimization variable $X\in \Pi_n$ of this QAP are symmetric $0/1$-matrices, and $B$ is symmetric as well, with zeros on the main diagonal, which means that the objective value is of the form $\sum_{i,j,k,l=1}^{n} a_{ik} b_{jl} X_{ij}X_{kl} = \sum_{i,j=1, i<j}^{n} 2 \alpha_{ij} b_{ij}$, where $\alpha_{ij}\in\mathbb{Z}$. Hence different objective values have to at least differ by $r$.
\end{proof}

For the inverse Lee-distance potential on a $6\times 6$-grid the set $V$ is $\{0,1,\frac{1}{2},\frac{1}{3},\frac{1}{4},\frac{1}{5},\frac{1}{6}\}$,
and $r = 2\left(\frac{1}{6}+\frac{3}{5}-\frac{3}{4}\right)=\frac{1}{30}$, which is optimal since 30 is the least common
denominator of the fractions in $2V$. Similarly one finds $r=\frac{1}{210}$ for a $7\times 7$-grid, $r=\frac{1}{420}$
 for a $8\times 8$-grid and $r=\frac{1}{1260}$ on a $10\times 10$-grid.
 In the Tables \ref{6table},\ref{7table}, \ref{8table} and \ref{10table} we give
 the bounds for square grids of sizes $6$,$7$,$8$ and $10$ respectively. As proven in
 Proposition \ref{InvertArrangement}, we only need to consider $m\leq \frac{n}{2}$. Bold
  font in the tables signify sharp bounds, in the sense of Proposition \ref{RoundErrorNotImportant}, which
   we then illustrate in Figures \ref{6figure}, \ref{7figure}, \ref{8figure} and \ref{10figure}.

\begin{table}[htbp] \centering
 \caption{The bounds on a 6x6 grid}
 \label{6table}
\vspace{0.2em}
 \begin{scriptsize}
\begin{tabular}{cccc}
\toprule
$m$&$PB(A,B)$&$SDPQAP(A,B)$&\begin{tabular}{@{}c@{}}Upper bounds from \\  simulated annealing\end{tabular}\\
\midrule
1&-1.514815&\textbf{0.000000}&\textbf{0.000000}\\
2&-2.125926&\textbf{0.333319}&\textbf{0.333333}\\
3&-1.833333&1.349939&1.500000\\
4&-0.637037&\textbf{2.999892}&\textbf{3.000000}\\
5&1.462963&5.416640&5.666667\\
6&4.466667&8.599983&8.666667\\
7&8.374074&12.622685&13.000000\\
8&13.185185&17.407305&17.600000\\
9&18.900000&22.937178&23.466667\\
10&25.518519&29.212957&29.666667\\
11&33.040741&36.233780&36.666667\\
12&41.466667&\textbf{44.000000}&\textbf{44.000000}\\
13&50.796296&53.065277&54.366667\\
14&61.029630&62.959998&64.666667\\
15&72.166667&73.687450&75.500000\\
16&84.207407&85.273263&86.666667\\
17&97.151852&97.718432&98.666667\\
18&\textbf{111.000000}&\textbf{111.000000}&\textbf{111.000000}\\
\bottomrule
\end{tabular}
\end{scriptsize}
\end{table}

\begin{table}[htbp] \centering
 \caption{The bounds on a 7x7 grid}
 \label{7table}
\vspace{0.2em}
 \begin{scriptsize}
\begin{tabular}{cccc}
\toprule
$m$&$PB(A,B)$&$SDPQAP(A,B)$&\begin{tabular}{@{}c@{}}Upper bounds from \\  simulated annealing\end{tabular}\\
\midrule
1&-1.535637&\textbf{0.000000}&\textbf{0.000000}\\
2&-2.287844&\textbf{0.333330}&\textbf{0.333333}\\
3&-2.256623&1.243763&1.300000\\
4&-1.441972&2.723982&2.800000\\
5&0.156109&4.784851&4.866667\\
6&2.537619&7.533726&7.800000\\
7&5.702558&10.916369&10.966667\\
8&9.650926&15.043550&15.500000\\
9&14.382724&19.814560&20.366667\\
10&19.897950&25.325560&25.900000\\
11&26.196607&31.554779&32.166667\\
12&33.278692&38.455887&39.033333\\
13&41.144207&46.029212&46.733333\\
14&49.793151&54.274568&54.933333\\
15&59.225525&63.260772&64.433333\\
16&69.441328&73.172931&74.100000\\
17&80.440560&83.797024&85.200000\\
18&92.223221&95.162225&96.600000\\
19&104.789312&107.298752&109.033333\\
20&118.138832&120.154716&122.000000\\
21&132.271782&133.732016&134.866667\\
22&147.188160&148.029982&150.066667\\
23&162.887969&163.048746&165.700000\\
24&179.371206&179.371185&182.266667\\
\bottomrule
\end{tabular}
\end{scriptsize}
\end{table}

\begin{table}[htbp] \centering
 \caption{The bounds on a 8x8 grid}
 \label{8table}
\vspace{0.2em}
\begin{scriptsize}
 \begin{tabular}{cccc}
\toprule
$m$&$PB(A,B)$&$SDPQAP(A,B)$&\begin{tabular}{@{}c@{}}Upper bounds from \\  simulated annealing\end{tabular}\\
\midrule
1&-1.670238&\textbf{0.000000}&\textbf{0.000000}\\
2&-2.654762&\textbf{0.249994}&\textbf{0.250000}\\
3&-2.953571&1.014038&1.133333\\
4&-2.566667&\textbf{2.266433}&\textbf{2.266667}\\
5&-1.494048&4.062460&4.233333\\
6&0.264286&6.435304&6.583333\\
7&2.708333&9.423375&9.666667\\
8&5.838095&12.965443&13.000000\\
9&9.653571&17.078833&17.442857\\
10&14.154762&21.749475&22.126190\\
11&19.341667&26.990007&27.628571\\
12&25.214286&32.848535&33.666667\\
13&31.772619&39.445606&40.352381\\
14&39.016667&46.636662&47.350000\\
15&46.946429&54.421402&54.950000\\
16&55.561905&62.799758&63.076190\\
17&64.863095&71.971752&72.921429\\
18&74.850000&81.768179&83.023810\\
19&85.522619&92.238774&93.535714\\
20&96.880952&103.368587&104.300000\\
21&108.925000&115.126506&116.114286\\
22&121.654762&127.522322&128.483333\\
23&135.070238&140.541217&141.719048\\
24&149.171429&154.193846&155.514286\\
25&163.958333&168.487184&170.390476\\
26&179.430952&183.448522&185.711905\\
27&195.589286&199.055388&201.416667\\
28&212.433333&215.278915&217.333333\\
29&229.963095&232.135382&234.083333\\
30&248.178571&249.661718&251.083333\\
31&267.079762&267.846258&268.750000\\
32&286.666667&\textbf{286.666665}&\textbf{286.666667}\\
\bottomrule
\end{tabular}
\end{scriptsize}
\end{table}

\begin{table}[htbp] \centering
 \caption{The bounds on a 10x10 grid}
 \label{10table}
\vspace{0.2em}
\begin{scriptsize}
 \begin{tabular}{cccc}
\toprule
$m$&$PB(A,B)$&$SDPQAP(A,B)$&\begin{tabular}{@{}c@{}}Upper bounds from \\  simulated annealing\end{tabular}\\
\midrule
1&-1.716889&\textbf{0.000000}&\textbf{0.000000}\\
2&-2.883429&\textbf{0.199988}&\textbf{0.200000}\\
3&-3.499619&0.811542&0.904762\\
4&-3.565460&\textbf{1.807631}&\textbf{1.809524}\\
5&-3.080952&3.233819&3.333333\\
6&-2.046095&5.131049&5.233333\\
7&-0.460889&7.479826&7.700000\\
8&1.674667&10.307204&10.355556\\
9&4.360571&13.579004&13.800000\\
10&7.596825&17.298929&17.433333\\
11&11.383429&21.467188&21.876190\\
12&15.720381&26.234485&26.679365\\
13&20.607683&31.472262&32.029365\\
14&26.045333&37.183990&37.744444\\
15&32.033333&43.378553&44.096825\\
16&38.571683&50.037058&50.736508\\
17&45.660381&57.183049&57.922222\\
18&53.299429&64.786226&65.342857\\
19&61.488825&72.867194&73.285714\\
20&70.228571&\textbf{81.428359}&\textbf{81.428571}\\
21&79.518667&90.810097&91.739683\\
22&89.359111&100.699416&102.068254\\
23&99.749905&111.093201&112.648413\\
24&110.691048&121.990693&123.492857\\
25&122.182540&133.400133&134.938889\\
26&134.224381&145.304277&146.824603\\
27&146.816571&157.729721&159.401587\\
28&159.959111&170.652552&172.295238\\
29&173.652000&184.080092&185.607937\\
30&187.895238&198.017798&199.388095\\
31&202.688825&212.458779&214.014286\\
32&218.032762&227.417991&228.861111\\
33&233.927048&242.967654&244.254762\\
34&250.371683&259.050406&260.466667\\
35&267.366667&275.649622&277.788889\\
36&284.912000&292.759839&295.125397\\
37&303.007683&310.386054&312.823810\\
38&321.653714&328.520878&331.160317\\
39&340.850095&347.242354&349.413492\\
40&360.596825&366.466726&368.701587\\
41&380.893905&386.186323&389.195238\\
42&401.741333&406.463567&410.162698\\
43&423.139111&427.236004&431.381746\\
44&445.087238&448.524441&452.888889\\
45&467.585714&470.378907&474.000000\\
46&490.634540&492.906204&496.033333\\
47&514.233714&515.926534&518.650000\\
48&538.383238&539.531651&541.466667\\
49&563.083111&563.667331&564.800000\\
50&\textbf{588.333333}&\textbf{588.332003}&\textbf{588.333333}\\
\bottomrule
\end{tabular}
\end{scriptsize}
\end{table}


\begin{figure}
\begin{center}
\begin{tikzpicture}[scale = 0.5]
\begin{scope}[local bounding box=s661, xshift = -4cm, scale = 0.5]

   \draw [step=1.0,black, very thick] (1,1) grid (7,7);

   \foreach \x/\y in {1/1}
      \fill[draw=black] (\x,\y) rectangle ++(1,1);
    \node[below,font=\small] at (s661.south) {$m=1$};
\end{scope}
\begin{scope}[local bounding box=s662, xshift = 0cm, scale = 0.5]
\draw [step=1.0,black, very thick] (1,1) grid (7,7);

   \foreach \x/\y in {1/1,4/4}
      \fill[draw=black] (\x,\y) rectangle ++(1,1);
   \node[below,font=\small] at (s662.south) {$m=2$};
\end{scope}
\begin{scope}[local bounding box=s664, xshift = 4cm, scale = 0.5]
   \draw [step=1.0,black, very thick] (1,1) grid (7,7);

   \foreach \x/\y in {1/1, 2/4, 4/2, 5/5}
      \fill[draw=black] (\x,\y) rectangle ++(1,1);
   \node[below,font=\small] at (s664.south) {$m=4$};
\end{scope}
\begin{scope}[local bounding box=s6612, xshift = -2cm, yshift = -4cm, scale = 0.5]
   \draw [step=1.0,black, very thick] (1,1) grid (7,7);

   \foreach \x/\y in {1/1, 1/4, 2/3,3/2,4/1, 2/6,6/2, 5/3,3/5,4/4, 6/5,5/6}
      \fill[draw=black] (\x,\y) rectangle ++(1,1);
   \node[below,font=\small] at (s6612.south) {$m=12$};
\end{scope}
\begin{scope}[local bounding box=s6618, xshift = 2cm, yshift = -4cm, scale = 0.5]
   \draw [step=1.0,black, very thick] (1,1) grid (7,7);

   \foreach \x/\y in {1/1,1/3,1/5,2/2,2/4,2/6,3/1,3/3,3/5,4/2,4/4,4/6,5/1,5/3,5/5,6/2,6/4,6/6}
      \fill[draw=black] (\x,\y) rectangle ++(1,1);
   \node[below,font=\small] at (s6618.south) {$m=18$};
\end{scope}

\end{tikzpicture}
  \caption{Optimal arrangements on a $6\times 6$ grid}\label{6figure}
  \end{center}
\end{figure}

\begin{figure}
  \centering
  \begin{tikzpicture}[scale = 0.5]
\begin{scope}[local bounding box=s771, xshift = -2.25cm, scale = 0.5]

   \draw [step=1.0,black, very thick] (1,1) grid (8,8);

   \foreach \x/\y in {1/1}
      \fill[draw=black] (\x,\y) rectangle ++(1,1);
    \node[below,font=\small] at (s771.south) {$m=1$};
\end{scope}
\begin{scope}[local bounding box=s772, xshift = 2.25cm, scale = 0.5]
\draw [step=1.0,black, very thick] (1,1) grid (8,8);

   \foreach \x/\y in {1/1,4/4}
      \fill[draw=black] (\x,\y) rectangle ++(1,1);
   \node[below,font=\small] at (s772.south) {$m=2$};
\end{scope}

\end{tikzpicture}
  \caption{Optimal arrangements on a $7\times 7$ grid}\label{7figure}
\end{figure}

\begin{figure}
  \centering
  \begin{tikzpicture}[scale = 0.5]
\begin{scope}[local bounding box=s881, xshift = -2.5cm, scale = 0.5]

   \draw [step=1.0,black, very thick] (1,1) grid (9,9);

   \foreach \x/\y in {1/1}
      \fill[draw=black] (\x,\y) rectangle ++(1,1);
    \node[below,font=\small] at (s881.south) {$m=1$};
\end{scope}
\begin{scope}[local bounding box=s882, xshift = 2.5cm, scale = 0.5]
\draw [step=1.0,black, very thick] (1,1) grid (9,9);

   \foreach \x/\y in {1/1,5/5}
      \fill[draw=black] (\x,\y) rectangle ++(1,1);
   \node[below,font=\small] at (s882.south) {$m=2$};
\end{scope}
\begin{scope}[local bounding box=s884, xshift = -2.5cm, yshift = -5cm, scale = 0.5]
   \draw [step=1.0,black, very thick] (1,1) grid (9,9);

   \foreach \x/\y in {1/1, 2/5, 5/2, 6/6}
      \fill[draw=black] (\x,\y) rectangle ++(1,1);
   \node[below,font=\small] at (s884.south) {$m=4$};
\end{scope}
\begin{scope}[local bounding box=s8832, xshift = 2.5cm, yshift = -5cm, scale = 0.5]
   \draw [step=1.0,black, very thick] (1,1) grid (9,9);

   \foreach \x/\y in {1/1,1/3,1/5,1/7,2/2,2/4,2/6,2/8,3/1,3/3,3/5,3/7,4/2,4/4,4/6,4/8,5/1,5/3,5/5,5/7,6/2,6/4,6/6,6/8,7/1,7/3,7/5,7/7,8/2,8/4,8/6,8/8}
      \fill[draw=black] (\x,\y) rectangle ++(1,1);
   \node[below,font=\small] at (s8832.south) {$m=32$};
\end{scope}

\end{tikzpicture}
  \caption{Optimal arrangements on a $8\times 8$ grid}\label{8figure}
\end{figure}

\begin{figure}
  \centering
  \begin{tikzpicture}[scale = 0.5]
\begin{scope}[local bounding box=s111, xshift = -3cm, scale = 0.5]

   \draw [step=1.0,black, very thick] (1,1) grid (11,11);

   \foreach \x/\y in {1/1}
      \fill[draw=black] (\x,\y) rectangle ++(1,1);
    \node[below,font=\small] at (s111.south) {$m=1$};
\end{scope}
\begin{scope}[local bounding box=s112, xshift = 3cm, scale = 0.5]
\draw [step=1.0,black, very thick] (1,1) grid (11,11);

   \foreach \x/\y in {1/1,6/6}
      \fill[draw=black] (\x,\y) rectangle ++(1,1);
   \node[below,font=\small] at (s112.south) {$m=2$};
\end{scope}
\begin{scope}[local bounding box=s114, xshift = -3cm, yshift = -6cm, scale = 0.5]
   \draw [step=1.0,black, very thick] (1,1) grid (11,11);

   \foreach \x/\y in {1/1, 3/6, 6/2, 8/7}
      \fill[draw=black] (\x,\y) rectangle ++(1,1);
   \node[below,font=\small] at (s114.south) {$m=4$};
\end{scope}
\begin{scope}[local bounding box=s1120, xshift = 3cm, yshift = -6cm, scale = 0.5]
   \draw [step=1.0,black, very thick] (1,1) grid (11,11);

   \foreach \x/\y in {1/1, 1/6, 2/4, 2/9, 3/7, 3/2 ,4/10, 4/5, 5/3, 5/8, 6/6, 6/1, 7/9, 7/4, 8/2, 8/7, 9/5, 9/10, 10/8, 10/3}
      \fill[draw=black] (\x,\y) rectangle ++(1,1);
   \node[below,font=\small] at (s1120.south) {$m=20$};
\end{scope}
\begin{scope}[local bounding box=s1150, xshift = 0cm, yshift = -12cm, scale = 0.5]
   \draw [step=1.0,black, very thick] (1,1) grid (11,11);

   \foreach \x/\y in {1/1,1/3,1/5,1/7,1/9,2/2,2/4,2/6,2/8,2/10,3/1,3/3,3/5,3/7,3/9, 4/2,4/4,4/6,4/8,4/10,5/1,5/3,5/5,5/7,5/9,6/2,6/4,6/6,6/8,6/10,7/1,7/3,7/5,7/7,7/9,8/2,8/4,8/6,8/8,8/10,9/1,9/3,9/5,9/7,9/9,10/2,10/4,10/6,10/8,10/10}
      \fill[draw=black] (\x,\y) rectangle ++(1,1);
   \node[below,font=\small] at (s1150.south) {$m=50$};
\end{scope}

\end{tikzpicture}
  \caption{Optimal arrangements on a $10\times 10$ grid}\label{10figure}
\end{figure}
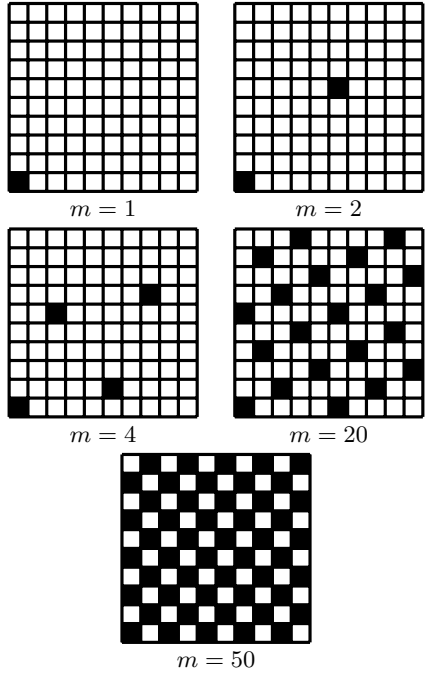

   Note that we could find several new optimal configurations by computing the SDP bound $SDPQAP(A,B)$, for example the case $m=20$ on a $10 \times 10$ grid.
   We do not include results for the weaker SDP bound $SDPPB(A,B)$ in the tables, since these turned out to equal the projected eigenvalue bound
   for small instances. We do not know if these bounds coincide in general, though.

   {\color{black} In these small cases we noticed that the bound is sharp only in (some of the) cases where $m$
   divides $n_1n_2$, and are, except in the case $m=4$ for some choices of $n_1$ and $n_2$, given by lattices. In these cases
    the nonzero variables $y_{ij}^{b\to b}$ do actually hint as to how the lattice can be constructed. For example in the case
    $n_1=n_2=10$, $m=20$ the optimal solution has $y_{ij}^{b\to b} =0$ except for $y_{05}^{b\to b}=y_{55}^{b\to b} = 0.2$
    and $y_{12}^{b\to b}=y_{13}^{b\to b}=y_{24}^{b\to b}=y_{34}^{b\to b}=0.4$, and their symmetric counterparts.
    Compare these to the drawing in Figure \ref{10figure}:
   These are exactly the distances appearing in the drawing, namely $(0,5)$, $(5,5)$, $(1,2)$, $(1,3)$, $(2,4)$, and $(3,4)$.
       We can use this knowledge to construct the solution as follows.
        The size of the orbit of a single point, repeatedly shifting it in direction $(i,j)$,
         is $\mathrm{lcm}\left(\frac{\mathrm{lcm}(n_1,i}{i},\frac{\mathrm{lcm}(n_2,j}{j}\right)$,
          where $\mathrm{lcm}(a,b)$ denotes the least common multiple of $a$ and $b$. We say that two lattice directions are
          \emph{distinct}, if the corresponding orbits of the same point overlap only in that point.
          In this example the orbit sizes are $2$ for the directions $(0,5)$ and $(5,5)$, $5$ for $(2,4)$, and $10$ for $(1,2)$, $(1,3)$ and $(3,4)$.
          If we now choose pairwise distinct orbits of orbit sizes factoring $m$,
          we can reconstruct the lattice solution by taking repeated orbits. In this case we can construct the solution in
          Figure \ref{10figure} for $n_1=n_2=10$ and $m=20$ by choosing the triple of generators
           $\{(0,5),(5,5),(2,4)\}$ or any of the pairs $\{(1,2),(0,5)\}, \{(1,2),(5,5)\}, \{(1,3),(0,5)\}, \{(3,4),(5,5)\}$.
            Alternatively we can find a second solution (which is the same, but mirrored), by choosing swapping the two axes of each generator.

   This way one can rapidly find cases where the bound is sharp. In about 30 minutes we were able to identify $2382$ such cases (of the $15083$ cases where $m$ divides $n_1n_2$), from grid sizes $1\times 1$ to $50\times 50$. We list a few such cases in Figure \ref{bigGridFigure}.
   }

\begin{figure}
  \centering
  \begin{tikzpicture}[scale = 0.5]
\begin{scope}[local bounding box=s111, xshift = -5cm, scale = 0.5]
    \node[inner sep=0pt,draw, very thick] at (0,0)
    {\includegraphics[width=.25\textwidth]{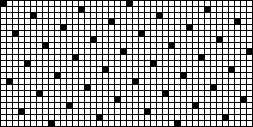}};
   \node[below,font=\small] at (s111.south) {$\substack{n_1=21, n_2 = 42, m = 42,\\ \text{generated by }(1,13)}$};
\end{scope}
\begin{scope}[local bounding box=s112, xshift = 5cm, scale = 0.5]
\node[inner sep=0pt,draw, very thick] at (0,0)
    {\includegraphics[width=.25\textwidth]{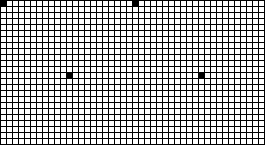}};
   \node[below,font=\small] at (s112.south) {$\substack{n_1=24, n_2 = 44, m = 4,\\ \text{generated by }(12,11)}$};
\end{scope}
\begin{scope}[local bounding box=s114, xshift = -5cm, yshift = -7cm, scale = 0.5]
\node[inner sep=0pt,draw, very thick] at (0,0)
    {\includegraphics[width=.25\textwidth]{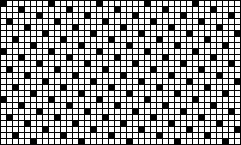}};
   \node[below,font=\small] at (s114.south) {$\substack{n_1=24, n_2 = 40, m = 120, \\ \text{generated by }(1,3)}$};
\end{scope}
\begin{scope}[local bounding box=s1120, xshift = 5cm, yshift = -7cm, scale = 0.5]
\node[inner sep=0pt,draw, very thick] at (0,0)
    {\includegraphics[width=.25\textwidth]{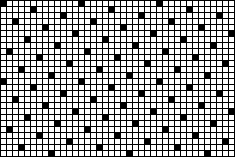}};
   \node[below,font=\small] at (s1120.south) {$\substack{n_1=26, n_2 = 39, m = 78,\\ \text{generated by }(1,5)}$};
\end{scope}
\begin{scope}[local bounding box=s1150, xshift = -5cm, yshift = -15cm, scale = 0.5]
\node[inner sep=0pt,draw, very thick] at (0,0)
    {\includegraphics[width=.25\textwidth]{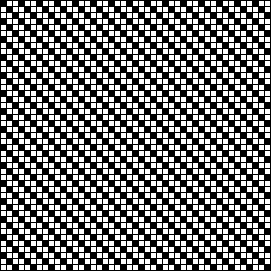}};
   \node[below,font=\small] at (s1150.south) {$\substack{n_1=45, n_2 = 45, m = 675, \\ \text{generated by }\{(1,1),(0,3)\}}$};
\end{scope}
\begin{scope}[local bounding box=s1160, xshift = 5cm, yshift = -15cm, scale = 0.5]
\node[inner sep=0pt,draw, very thick] at (0,0)
    {\includegraphics[width=.25\textwidth]{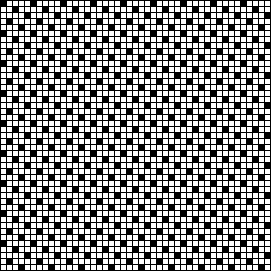}};
   \node[below,font=\small] at (s1160.south) {$\substack{n_1=45, n_2 = 45, m = 405, \\ \text{generated by }\{(1,2),(0,5)\}}$};
\end{scope}
\begin{scope}[local bounding box=s1170, xshift = 0cm, yshift = -25cm, scale = 0.5]
\node[inner sep=0pt,draw, very thick] at (0,0)
    {\includegraphics[width=.25\textwidth]{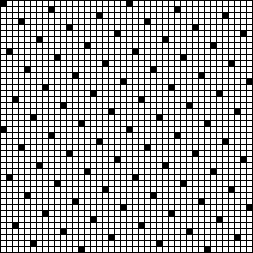}};
   \node[below,font=\small] at (s1170.south) {$\substack{n_1=42, n_2 = 42, m = 84, \\ \text{generated by }\{(1,8),(0,21)\}}$};
\end{scope}

\end{tikzpicture}
  \caption{A small selection of cases where the SDP bound is sharp for larger grids.}\label{bigGridFigure}
\end{figure}

{\color{black}
\section{Concluding remarks}
We have been able to use semidefinite programming bounds to identify several new minimum energy configurations
of repulsing particles on a toric grid, as shown in Figure \ref{bigGridFigure}, for example.
The next step would be to use our insight on optimal lattice configurations that we obtained numerically, to prove optimality for the generalised families
of corresponding lattice configurations, similar to the way the authors of \cite{bouman2013energy} proved optimality of certain `chessboard' configurations.
This remains a topic for future research.}

\section*{Acknowledgement}
The authors thank Henry Wolkowicz for pointing out the reference \cite{pong2016eigenvalue}, and for helpful comments.


\appendix
{
\twocolumn
\section{List of sharp cases}\label{SharpList}
Here we give a list of cases where the bound turned out the be sharp (with $m\geq 2$). The fourth column gives a generator of the optimal solution: Start with a single particle in any position, then repeatedly apply the shifts given in the last column to generate the full solution.

\footnotesize

\begin{xtabular}{|c|c|c|l|}
  \hline
  $n_1$ & $n_2$ & $m$ & generator \\
  \hline
  2 &2& 2 &(1, 1), \\
3 &3& 3 &(1, 1), \\
4 &1& 2 &(2, 0), \\
4 &2& 2 &(2, 1), \\
4 &2& 4 &(1, 1), \\
4 &4& 2 &(2, 2), \\
4 &4& 8 &(1, 1), (0, 2), \\
5 &5& 5 &(1, 2), \\
6 &1& 2 &(3, 0), \\
6 &1& 3 &(2, 0), \\
6 &2& 2 &(3, 1), \\
6 &2& 6 &(1, 1), \\
6 &3& 3 &(2, 1), \\
6 &3& 6 &(1, 1), \\
6 &4& 2 &(3, 2), \\
6 &4& 12 &(1, 1), \\
6 &6& 2 &(3, 3), \\
6 &6& 12 &(1, 1), (0, 3), \\
6 &6& 18 &(1, 1), (0, 2), \\
8 &1& 2 &(4, 0), \\
8 &1& 4 &(2, 0), \\
8 &2& 2 &(4, 1), \\
8 &2& 4 &(2, 1), \\
8 &2& 8 &(1, 1), \\
8 &4& 2 &(4, 2), \\
8 &4& 4 &(2, 2), \\
8 &4& 16 &(1, 1), (0, 2), \\
8 &6& 2 &(4, 3), \\
8 &6& 24 &(1, 1), \\
8 &8& 2 &(4, 4), \\
8 &8& 32 &(1, 1), (0, 2), \\
9 &1& 3 &(3, 0), \\
9 &3& 3 &(3, 1), \\
9 &3& 9 &(1, 1), \\
9 &6& 18 &(1, 1), \\
9 &9& 27 &(1, 1), (0, 3), \\
10 &1& 2 &(5, 0), \\
10 &1& 5 &(2, 0), \\
10 &2& 2 &(5, 1), \\
10 &2& 10 &(1, 1), \\
10 &4& 2 &(5, 2), \\
10 &4& 20 &(1, 1), \\
10 &5& 10 &(1, 2), \\
10 &6& 2 &(5, 3), \\
10 &6& 30 &(1, 1), \\
10 &8& 2 &(5, 4), \\
10 &8& 40 &(1, 1), \\
10 &10& 2 &(5, 5), \\
10 &10& 20 &(1, 2), (0, 5), \\
10 &10& 50 &(1, 1), (0, 2), \\
12 &1& 2 &(6, 0), \\
12 &1& 4 &(3, 0), \\
12 &1& 6 &(2, 0), \\
12 &2& 2 &(6, 1), \\
12 &2& 4 &(3, 1), \\
12 &2& 6 &(2, 1), \\
12 &2& 12 &(1, 1), \\
12 &3& 6 &(2, 1), \\
12 &3& 12 &(1, 1), \\
12 &4& 2 &(6, 2), \\
12 &4& 4 &(3, 2), \\
12 &4& 24 &(1, 1), (0, 2), \\
12 &6& 2 &(6, 3), \\
12 &6& 4 &(3, 3), \\
12 &6& 24 &(1, 1), (0, 3), \\
12 &6& 36 &(1, 1), (0, 2), \\
12 &8& 2 &(6, 4), \\
12 &8& 4 &(3, 4), \\
12 &8& 48 &(1, 1), (0, 4), (6, 0), \\
12 &9& 36 &(1, 1), \\
12 &10& 2 &(6, 5), \\
12 &10& 60 &(1, 1), \\
12 &12& 2 &(6, 6), \\
12 &12& 48 &(1, 1), (0, 3), \\
12 &12& 72 &(1, 1), (0, 2), \\
13 &13& 13 &(1, 5), \\
14 &1& 2 &(7, 0), \\
14 &1& 7 &(2, 0), \\
14 &2& 2 &(7, 1), \\
14 &2& 14 &(1, 1), \\
14 &4& 2 &(7, 2), \\
14 &4& 28 &(1, 1), \\
14 &6& 2 &(7, 3), \\
14 &6& 42 &(1, 1), \\
14 &8& 2 &(7, 4), \\
14 &8& 56 &(1, 1), \\
14 &10& 2 &(7, 5), \\
14 &10& 70 &(1, 1), \\
14 &12& 2 &(7, 6), \\
14 &12& 84 &(1, 1), \\
14 &14& 2 &(7, 7), \\
14 &14& 98 &(1, 1), (0, 2), \\
15 &1& 3 &(5, 0), \\
15 &1& 5 &(3, 0), \\
15 &3& 15 &(1, 1), \\
15 &5& 5 &(3, 2), \\
15 &5& 15 &(1, 2), \\
15 &6& 30 &(1, 1), \\
15 &9& 45 &(1, 1), \\
15 &10& 30 &(1, 3), \\
15 &12& 60 &(1, 1), \\
15 &15& 45 &(1, 2), (0, 5), \\
15 &15& 75 &(1, 1), (0, 3), \\
16 &1& 2 &(8, 0), \\
16 &1& 4 &(4, 0), \\
16 &1& 8 &(2, 0), \\
16 &2& 2 &(8, 1), \\
16 &2& 4 &(4, 1), \\
16 &2& 8 &(2, 1), \\
16 &2& 16 &(1, 1), \\
16 &4& 2 &(8, 2), \\
16 &4& 4 &(4, 2), \\
16 &4& 32 &(1, 1), (0, 2), \\
16 &6& 2 &(8, 3), \\
16 &6& 4 &(4, 3), \\
16 &6& 48 &(1, 1), \\
16 &8& 2 &(8, 4), \\
16 &8& 4 &(4, 4), \\
16 &8& 16 &(1, 3), \\
16 &8& 64 &(1, 1), (0, 2), \\
16 &10& 2 &(8, 5), \\
16 &10& 80 &(1, 1), \\
16 &12& 2 &(8, 6), \\
16 &12& 96 &(1, 1), (0, 6), \\
16 &14& 2 &(8, 7), \\
16 &14& 112 &(1, 1), \\
16 &16& 2 &(8, 8), \\
16 &16& 32 &(1, 3), (0, 8), \\
16 &16& 128 &(1, 1), (0, 2), \\
18 &1& 2 &(9, 0), \\
18 &1& 6 &(3, 0), \\
18 &1& 9 &(2, 0), \\
18 &2& 2 &(9, 1), \\
18 &2& 6 &(3, 1), \\
18 &2& 18 &(1, 1), \\
18 &3& 6 &(3, 1), \\
18 &3& 18 &(1, 1), \\
18 &4& 2 &(9, 2), \\
18 &4& 36 &(1, 1), \\
18 &6& 2 &(9, 3), \\
18 &6& 36 &(1, 1), (0, 3), \\
18 &6& 54 &(1, 1), (0, 2), \\
18 &8& 2 &(9, 4), \\
18 &8& 72 &(1, 1), \\
18 &9& 54 &(1, 1), (0, 3), \\
18 &10& 2 &(9, 5), \\
18 &10& 90 &(1, 1), \\
18 &12& 2 &(9, 6), \\
18 &12& 72 &(1, 1), (0, 6), (9, 0), \\
18 &12& 108 &(1, 1), (0, 4), \\
18 &14& 2 &(9, 7), \\
18 &14& 126 &(1, 1), \\
18 &15& 90 &(1, 1), \\
18 &16& 2 &(9, 8), \\
18 &16& 144 &(1, 1), \\
18 &18& 2 &(9, 9), \\
18 &18& 108 &(1, 1), (0, 3), \\
18 &18& 162 &(1, 1), (0, 2), \\
20 &1& 2 &(10, 0), \\
20 &1& 4 &(5, 0), \\
20 &1& 5 &(4, 0), \\
20 &1& 10 &(2, 0), \\
20 &2& 2 &(10, 1), \\
20 &2& 4 &(5, 1), \\
20 &2& 10 &(2, 1), \\
20 &2& 20 &(1, 1), \\
20 &4& 2 &(10, 2), \\
20 &4& 4 &(5, 2), \\
20 &4& 40 &(1, 1), (0, 2), \\
20 &5& 10 &(2, 2), \\
20 &5& 20 &(1, 2), \\
20 &6& 2 &(10, 3), \\
20 &6& 4 &(5, 3), \\
20 &6& 60 &(1, 1), \\
20 &8& 2 &(10, 4), \\
20 &8& 4 &(5, 4), \\
20 &8& 80 &(1, 1), (0, 4), (10, 0), \\
20 &10& 2 &(10, 5), \\
20 &10& 4 &(5, 5), \\
20 &10& 40 &(1, 2), (0, 5), \\
20 &10& 100 &(1, 1), (0, 2), \\
20 &12& 2 &(10, 6), \\
20 &12& 4 &(5, 6), \\
20 &12& 120 &(1, 1), (0, 6), \\
20 &14& 140 &(1, 1), \\
20 &15& 60 &(1, 2), \\
20 &16& 2 &(10, 8), \\
20 &16& 160 &(1, 1), (0, 8), (10, 0), \\
20 &18& 2 &(10, 9), \\
20 &18& 180 &(1, 1), \\
20 &20& 2 &(10, 10), \\
20 &20& 80 &(1, 2), (0, 5), \\
20 &20& 200 &(1, 1), (0, 2), \\
21 &1& 3 &(7, 0), \\
21 &1& 7 &(3, 0), \\
21 &3& 21 &(1, 1), \\
21 &6& 42 &(1, 1), \\
21 &9& 63 &(1, 1), \\
21 &12& 84 &(1, 1), \\
21 &15& 105 &(1, 1), \\
21 &18& 126 &(1, 1), \\
21 &21& 21 &(1, 8), \\
21 &21& 147 &(1, 1), (0, 3), \\
22 &1& 2 &(11, 0), \\
22 &1& 11 &(2, 0), \\
22 &2& 2 &(11, 1), \\
22 &2& 22 &(1, 1), \\
22 &4& 2 &(11, 2), \\
22 &4& 44 &(1, 1), \\
22 &6& 2 &(11, 3), \\
22 &6& 66 &(1, 1), \\
22 &8& 2 &(11, 4), \\
22 &8& 88 &(1, 1), \\
22 &10& 2 &(11, 5), \\
22 &10& 110 &(1, 1), \\
22 &12& 2 &(11, 6), \\
22 &12& 132 &(1, 1), \\
22 &14& 2 &(11, 7), \\
22 &14& 154 &(1, 1), \\
22 &16& 2 &(11, 8), \\
22 &16& 176 &(1, 1), \\
22 &18& 2 &(11, 9), \\
22 &18& 198 &(1, 1), \\
22 &20& 2 &(11, 10), \\
22 &20& 220 &(1, 1), \\
22 &22& 2 &(11, 11), \\
22 &22& 242 &(1, 1), (0, 2), \\
24 &1& 2 &(12, 0), \\
24 &1& 4 &(6, 0), \\
24 &1& 8 &(3, 0), \\
24 &1& 6 &(4, 0), \\
24 &1& 12 &(2, 0), \\
24 &2& 2 &(12, 1), \\
24 &2& 4 &(6, 1), \\
24 &2& 8 &(3, 1), \\
24 &2& 6 &(4, 1), \\
24 &2& 12 &(2, 1), \\
24 &2& 24 &(1, 1), \\
24 &3& 6 &(4, 1), \\
24 &3& 12 &(2, 1), \\
24 &3& 24 &(1, 1), \\
24 &4& 2 &(12, 2), \\
24 &4& 4 &(6, 2), \\
24 &4& 6 &(4, 2), \\
24 &4& 48 &(1, 1), (0, 2), \\
24 &6& 2 &(12, 3), \\
24 &6& 4 &(6, 3), \\
24 &6& 48 &(1, 1), (0, 3), \\
24 &6& 72 &(1, 1), (0, 2), \\
24 &8& 2 &(12, 4), \\
24 &8& 4 &(6, 4), \\
24 &8& 96 &(1, 1), (0, 2), \\
24 &9& 72 &(1, 1), \\
24 &10& 2 &(12, 5), \\
24 &10& 4 &(6, 5), \\
24 &10& 120 &(1, 1), \\
24 &12& 2 &(12, 6), \\
24 &12& 4 &(6, 6), \\
24 &12& 96 &(1, 1), (0, 3), \\
24 &12& 144 &(1, 1), (0, 2), \\
24 &14& 2 &(12, 7), \\
24 &14& 168 &(1, 1), \\
24 &15& 120 &(1, 1), \\
24 &16& 2 &(12, 8), \\
24 &16& 48 &(1, 3), \\
24 &16& 192 &(1, 1), (0, 4), (6, 0), \\
24 &18& 2 &(12, 9), \\
24 &18& 144 &(1, 1), (0, 9), \\
24 &18& 216 &(1, 1), (0, 6), (8, 0), \\
24 &20& 2 &(12, 10), \\
24 &20& 240 &(1, 1), (0, 10), \\
24 &21& 168 &(1, 1), \\
24 &22& 2 &(12, 11), \\
24 &22& 264 &(1, 1), \\
24 &24& 2 &(12, 12), \\
24 &24& 192 &(1, 1), (0, 3), \\
24 &24& 72 &(1, 3), (0, 8), \\
24 &24& 288 &(1, 1), (0, 2), \\
25 &1& 5 &(5, 0), \\
25 &5& 25 &(1, 2), \\
25 &10& 50 &(1, 3), \\
25 &15& 75 &(1, 2), \\
25 &20& 100 &(1, 3), \\
25 &25& 125 &(1, 2), (0, 5), \\
26 &1& 2 &(13, 0), \\
26 &1& 13 &(2, 0), \\
26 &2& 2 &(13, 1), \\
26 &2& 26 &(1, 1), \\
26 &4& 2 &(13, 2), \\
26 &4& 52 &(1, 1), \\
26 &6& 2 &(13, 3), \\
26 &6& 78 &(1, 1), \\
26 &8& 2 &(13, 4), \\
26 &8& 104 &(1, 1), \\
26 &10& 2 &(13, 5), \\
26 &10& 130 &(1, 1), \\
26 &12& 2 &(13, 6), \\
26 &12& 156 &(1, 1), \\
26 &13& 26 &(1, 5), \\
26 &14& 2 &(13, 7), \\
26 &14& 182 &(1, 1), \\
26 &16& 2 &(13, 8), \\
26 &16& 208 &(1, 1), \\
26 &18& 2 &(13, 9), \\
26 &18& 234 &(1, 1), \\
26 &20& 2 &(13, 10), \\
26 &20& 260 &(1, 1), \\
26 &22& 2 &(13, 11), \\
26 &22& 286 &(1, 1), \\
26 &24& 2 &(13, 12), \\
26 &24& 312 &(1, 1), \\
26 &26& 2 &(13, 13), \\
26 &26& 52 &(1, 5), (0, 13), \\
26 &26& 338 &(1, 1), (0, 2), \\
27 &1& 3 &(9, 0), \\
27 &1& 9 &(3, 0), \\
27 &3& 9 &(3, 1), \\
27 &3& 27 &(1, 1), \\
27 &6& 54 &(1, 1), \\
27 &9& 81 &(1, 1), (0, 3), \\
27 &12& 108 &(1, 1), \\
27 &15& 135 &(1, 1), \\
27 &18& 162 &(1, 1), (0, 6), \\
27 &21& 189 &(1, 1), \\
27 &24& 216 &(1, 1), \\
27 &27& 243 &(1, 1), (0, 3), \\
28 &1& 2 &(14, 0), \\
28 &1& 4 &(7, 0), \\
28 &1& 7 &(4, 0), \\
28 &1& 14 &(2, 0), \\
28 &2& 2 &(14, 1), \\
28 &2& 4 &(7, 1), \\
28 &2& 14 &(2, 1), \\
28 &2& 28 &(1, 1), \\
28 &4& 2 &(14, 2), \\
28 &4& 4 &(7, 2), \\
28 &4& 56 &(1, 1), (0, 2), \\
28 &6& 2 &(14, 3), \\
28 &6& 4 &(7, 3), \\
28 &6& 84 &(1, 1), \\
28 &7& 14 &(2, 3), \\
28 &8& 2 &(14, 4), \\
28 &8& 4 &(7, 4), \\
28 &8& 112 &(1, 1), (0, 4), (14, 0), \\
28 &10& 2 &(14, 5), \\
28 &10& 4 &(7, 5), \\
28 &10& 140 &(1, 1), \\
28 &12& 2 &(14, 6), \\
28 &12& 4 &(7, 6), \\
28 &12& 168 &(1, 1), (0, 6), \\
28 &14& 2 &(14, 7), \\
28 &14& 4 &(7, 7), \\
28 &14& 196 &(1, 1), (0, 2), \\
28 &16& 2 &(14, 8), \\
28 &16& 4 &(7, 8), \\
28 &16& 224 &(1, 1), (0, 8), (14, 0), \\
28 &18& 2 &(14, 9), \\
28 &18& 252 &(1, 1), \\
28 &20& 2 &(14, 10), \\
28 &20& 280 &(1, 1), (0, 10), \\
28 &22& 2 &(14, 11), \\
28 &22& 308 &(1, 1), \\
28 &24& 2 &(14, 12), \\
28 &24& 336 &(1, 1), (0, 12), (14, 0), \\
28 &26& 2 &(14, 13), \\
28 &26& 364 &(1, 1), \\
28 &28& 2 &(14, 14), \\
28 &28& 392 &(1, 1), (0, 2), \\
30 &1& 2 &(15, 0), \\
30 &1& 6 &(5, 0), \\
30 &1& 10 &(3, 0), \\
30 &1& 15 &(2, 0), \\
30 &2& 2 &(15, 1), \\
30 &2& 6 &(5, 1), \\
30 &2& 10 &(3, 1), \\
30 &2& 30 &(1, 1), \\
30 &3& 6 &(5, 1), \\
30 &3& 30 &(1, 1), \\
30 &4& 2 &(15, 2), \\
30 &4& 6 &(5, 2), \\
30 &4& 60 &(1, 1), \\
30 &5& 10 &(3, 2), \\
30 &5& 30 &(1, 2), \\
30 &6& 2 &(15, 3), \\
30 &6& 60 &(1, 1), (0, 3), \\
30 &6& 90 &(1, 1), (0, 2), \\
30 &8& 2 &(15, 4), \\
30 &8& 120 &(1, 1), \\
30 &9& 90 &(1, 1), \\
30 &10& 2 &(15, 5), \\
30 &10& 60 &(1, 2), (0, 5), \\
30 &10& 150 &(1, 1), (0, 2), \\
30 &12& 2 &(15, 6), \\
30 &12& 120 &(1, 1), (0, 6), (15, 0), \\
30 &12& 180 &(1, 1), (0, 4), \\
30 &14& 2 &(15, 7), \\
30 &14& 210 &(1, 1), \\
30 &15& 90 &(1, 2), (0, 5), \\
30 &15& 150 &(1, 1), (0, 3), \\
30 &16& 2 &(15, 8), \\
30 &16& 240 &(1, 1), \\
30 &18& 2 &(15, 9), \\
30 &18& 180 &(1, 1), (0, 9), \\
30 &18& 270 &(1, 1), (0, 6), (10, 0), \\
30 &20& 2 &(15, 10), \\
30 &20& 120 &(1, 3), (0, 10), (15, 0), \\
30 &20& 300 &(1, 1), (0, 4), \\
30 &21& 210 &(1, 1), \\
30 &22& 2 &(15, 11), \\
30 &22& 330 &(1, 1), \\
30 &24& 2 &(15, 12), \\
30 &24& 240 &(1, 1), (0, 12), (15, 0), \\
30 &24& 360 &(1, 1), (0, 8), \\
30 &25& 150 &(1, 2), \\
30 &26& 2 &(15, 13), \\
30 &26& 390 &(1, 1), \\
30 &27& 270 &(1, 1), \\
30 &28& 2 &(15, 14), \\
30 &28& 420 &(1, 1), \\
30 &30& 2 &(15, 15), \\
30 &30& 180 &(1, 2), (0, 5), \\
30 &30& 300 &(1, 1), (0, 3), \\
30 &30& 450 &(1, 1), (0, 2), \\
32 &1& 2 &(16, 0), \\
32 &1& 4 &(8, 0), \\
32 &1& 8 &(4, 0), \\
32 &1& 16 &(2, 0), \\
32 &2& 2 &(16, 1), \\
32 &2& 4 &(8, 1), \\
32 &2& 8 &(4, 1), \\
32 &2& 16 &(2, 1), \\
32 &2& 32 &(1, 1), \\
32 &4& 2 &(16, 2), \\
32 &4& 4 &(8, 2), \\
32 &4& 8 &(4, 2), \\
32 &4& 64 &(1, 1), (0, 2), \\
32 &6& 2 &(16, 3), \\
32 &6& 4 &(8, 3), \\
32 &6& 96 &(1, 1), \\
32 &8& 2 &(16, 4), \\
32 &8& 4 &(8, 4), \\
32 &8& 32 &(1, 3), \\
32 &8& 128 &(1, 1), (0, 2), \\
32 &10& 2 &(16, 5), \\
32 &10& 4 &(8, 5), \\
32 &10& 160 &(1, 1), \\
32 &12& 2 &(16, 6), \\
32 &12& 4 &(8, 6), \\
32 &12& 192 &(1, 1), (0, 6), \\
32 &14& 2 &(16, 7), \\
32 &14& 4 &(8, 7), \\
32 &14& 224 &(1, 1), \\
32 &16& 2 &(16, 8), \\
32 &16& 4 &(8, 8), \\
32 &16& 64 &(1, 3), (0, 8), \\
32 &16& 256 &(1, 1), (0, 2), \\
32 &18& 2 &(16, 9), \\
32 &18& 288 &(1, 1), \\
32 &20& 2 &(16, 10), \\
32 &20& 320 &(1, 1), (0, 10), \\
32 &22& 2 &(16, 11), \\
32 &22& 352 &(1, 1), \\
32 &24& 2 &(16, 12), \\
32 &24& 96 &(1, 5), \\
32 &24& 384 &(1, 1), (0, 6), \\
32 &26& 2 &(16, 13), \\
32 &26& 416 &(1, 1), \\
32 &28& 2 &(16, 14), \\
32 &28& 448 &(1, 1), (0, 14), \\
32 &30& 2 &(16, 15), \\
32 &30& 480 &(1, 1), \\
32 &32& 2 &(16, 16), \\
32 &32& 128 &(1, 3), (0, 8), \\
32 &32& 512 &(1, 1), (0, 2), \\
33 &1& 11 &(3, 0), \\
33 &3& 33 &(1, 1), \\
33 &6& 66 &(1, 1), \\
33 &9& 99 &(1, 1), \\
33 &12& 132 &(1, 1), \\
33 &15& 165 &(1, 1), \\
33 &18& 198 &(1, 1), \\
33 &21& 231 &(1, 1), \\
33 &24& 264 &(1, 1), \\
33 &27& 297 &(1, 1), \\
33 &30& 330 &(1, 1), \\
33 &33& 363 &(1, 1), (0, 3), \\
34 &1& 2 &(17, 0), \\
34 &1& 17 &(2, 0), \\
34 &2& 2 &(17, 1), \\
34 &2& 34 &(1, 1), \\
34 &4& 2 &(17, 2), \\
34 &4& 68 &(1, 1), \\
34 &6& 2 &(17, 3), \\
34 &6& 102 &(1, 1), \\
34 &8& 2 &(17, 4), \\
34 &8& 136 &(1, 1), \\
34 &10& 2 &(17, 5), \\
34 &10& 170 &(1, 1), \\
34 &12& 2 &(17, 6), \\
34 &12& 204 &(1, 1), \\
34 &14& 2 &(17, 7), \\
34 &14& 238 &(1, 1), \\
34 &16& 2 &(17, 8), \\
34 &16& 272 &(1, 1), \\
34 &18& 2 &(17, 9), \\
34 &18& 306 &(1, 1), \\
34 &20& 2 &(17, 10), \\
34 &20& 340 &(1, 1), \\
34 &22& 2 &(17, 11), \\
34 &22& 374 &(1, 1), \\
34 &24& 2 &(17, 12), \\
34 &24& 408 &(1, 1), \\
34 &26& 2 &(17, 13), \\
34 &26& 442 &(1, 1), \\
34 &28& 2 &(17, 14), \\
34 &28& 476 &(1, 1), \\
34 &30& 2 &(17, 15), \\
34 &30& 510 &(1, 1), \\
34 &32& 2 &(17, 16), \\
34 &32& 544 &(1, 1), \\
34 &34& 2 &(17, 17), \\
34 &34& 578 &(1, 1), (0, 2), \\
35 &1& 7 &(5, 0), \\
35 &5& 35 &(1, 2), \\
35 &10& 70 &(1, 3), \\
35 &15& 105 &(1, 2), \\
35 &20& 140 &(1, 3), \\
35 &25& 175 &(1, 2), \\
35 &30& 210 &(1, 7), \\
35 &35& 245 &(1, 2), (0, 5), \\
36 &1& 2 &(18, 0), \\
36 &1& 4 &(9, 0), \\
36 &1& 6 &(6, 0), \\
36 &1& 12 &(3, 0), \\
36 &1& 9 &(4, 0), \\
36 &1& 18 &(2, 0), \\
36 &2& 2 &(18, 1), \\
36 &2& 4 &(9, 1), \\
36 &2& 6 &(6, 1), \\
36 &2& 12 &(3, 1), \\
36 &2& 18 &(2, 1), \\
36 &2& 36 &(1, 1), \\
36 &3& 6 &(6, 1), \\
36 &3& 12 &(3, 1), \\
36 &3& 18 &(2, 1), \\
36 &3& 36 &(1, 1), \\
36 &4& 2 &(18, 2), \\
36 &4& 4 &(9, 2), \\
36 &4& 6 &(6, 2), \\
36 &4& 72 &(1, 1), (0, 2), \\
36 &6& 2 &(18, 3), \\
36 &6& 4 &(9, 3), \\
36 &6& 72 &(1, 1), (0, 3), \\
36 &6& 108 &(1, 1), (0, 2), \\
36 &8& 2 &(18, 4), \\
36 &8& 4 &(9, 4), \\
36 &8& 144 &(1, 1), (0, 4), (18, 0), \\
36 &9& 108 &(1, 1), (0, 3), \\
36 &10& 2 &(18, 5), \\
36 &10& 4 &(9, 5), \\
36 &10& 180 &(1, 1), \\
36 &12& 2 &(18, 6), \\
36 &12& 4 &(9, 6), \\
36 &12& 144 &(1, 1), (0, 3), \\
36 &12& 216 &(1, 1), (0, 2), \\
36 &14& 2 &(18, 7), \\
36 &14& 4 &(9, 7), \\
36 &14& 252 &(1, 1), \\
36 &15& 180 &(1, 1), \\
36 &16& 2 &(18, 8), \\
36 &16& 4 &(9, 8), \\
36 &16& 288 &(1, 1), (0, 8), (18, 0), \\
36 &18& 2 &(18, 9), \\
36 &18& 4 &(9, 9), \\
36 &18& 216 &(1, 1), (0, 3), \\
36 &18& 324 &(1, 1), (0, 2), \\
36 &20& 2 &(18, 10), \\
36 &20& 4 &(9, 10), \\
36 &20& 360 &(1, 1), (0, 10), \\
36 &21& 252 &(1, 1), \\
36 &22& 2 &(18, 11), \\
36 &22& 396 &(1, 1), \\
36 &24& 2 &(18, 12), \\
36 &24& 288 &(1, 1), (0, 6), (9, 0), \\
36 &24& 432 &(1, 1), (0, 4), (6, 0), (6, 4), \\
36 &26& 2 &(18, 13), \\
36 &26& 468 &(1, 1), \\
36 &27& 324 &(1, 1), (0, 9), (12, 0), \\
36 &28& 2 &(18, 14), \\
36 &28& 504 &(1, 1), (0, 14), \\
36 &30& 2 &(18, 15), \\
36 &30& 360 &(1, 1), (0, 15), \\
36 &30& 540 &(1, 1), (0, 10), \\
36 &32& 2 &(18, 16), \\
36 &32& 576 &(1, 1), (0, 16), (18, 0), \\
36 &33& 396 &(1, 1), \\
36 &34& 2 &(18, 17), \\
36 &34& 612 &(1, 1), \\
36 &36& 2 &(18, 18), \\
36 &36& 432 &(1, 1), (0, 3), \\
36 &36& 648 &(1, 1), (0, 2), \\
38 &1& 2 &(19, 0), \\
38 &1& 19 &(2, 0), \\
38 &2& 2 &(19, 1), \\
38 &2& 38 &(1, 1), \\
38 &4& 2 &(19, 2), \\
38 &4& 76 &(1, 1), \\
38 &6& 2 &(19, 3), \\
38 &6& 114 &(1, 1), \\
38 &8& 2 &(19, 4), \\
38 &8& 152 &(1, 1), \\
38 &10& 2 &(19, 5), \\
38 &10& 190 &(1, 1), \\
38 &12& 2 &(19, 6), \\
38 &12& 228 &(1, 1), \\
38 &14& 2 &(19, 7), \\
38 &14& 266 &(1, 1), \\
38 &16& 2 &(19, 8), \\
38 &16& 304 &(1, 1), \\
38 &18& 2 &(19, 9), \\
38 &18& 342 &(1, 1), \\
38 &20& 2 &(19, 10), \\
38 &20& 380 &(1, 1), \\
38 &22& 2 &(19, 11), \\
38 &22& 418 &(1, 1), \\
38 &24& 2 &(19, 12), \\
38 &24& 456 &(1, 1), \\
38 &26& 2 &(19, 13), \\
38 &26& 494 &(1, 1), \\
38 &28& 2 &(19, 14), \\
38 &28& 532 &(1, 1), \\
38 &30& 2 &(19, 15), \\
38 &30& 570 &(1, 1), \\
38 &32& 2 &(19, 16), \\
38 &32& 608 &(1, 1), \\
38 &34& 2 &(19, 17), \\
38 &34& 646 &(1, 1), \\
38 &36& 2 &(19, 18), \\
38 &36& 684 &(1, 1), \\
38 &38& 2 &(19, 19), \\
38 &38& 722 &(1, 1), (0, 2), \\
39 &1& 13 &(3, 0), \\
39 &3& 39 &(1, 1), \\
39 &6& 78 &(1, 1), \\
39 &9& 117 &(1, 1), \\
39 &12& 156 &(1, 1), \\
39 &13& 39 &(1, 5), \\
39 &15& 195 &(1, 1), \\
39 &18& 234 &(1, 1), \\
39 &21& 273 &(1, 1), \\
39 &24& 312 &(1, 1), \\
39 &26& 78 &(1, 5), \\
39 &27& 351 &(1, 1), \\
39 &30& 390 &(1, 1), \\
39 &33& 429 &(1, 1), \\
39 &36& 468 &(1, 1), \\
39 &39& 117 &(1, 5), (0, 13), \\
39 &39& 507 &(1, 1), (0, 3), \\
40 &1& 2 &(20, 0), \\
40 &1& 4 &(10, 0), \\
40 &1& 8 &(5, 0), \\
40 &1& 10 &(4, 0), \\
40 &1& 20 &(2, 0), \\
40 &2& 2 &(20, 1), \\
40 &2& 4 &(10, 1), \\
40 &2& 8 &(5, 1), \\
40 &2& 10 &(4, 1), \\
40 &2& 20 &(2, 1), \\
40 &2& 40 &(1, 1), \\
40 &4& 2 &(20, 2), \\
40 &4& 4 &(10, 2), \\
40 &4& 8 &(5, 2), \\
40 &4& 80 &(1, 1), (0, 2), \\
40 &5& 10 &(4, 2), \\
40 &5& 20 &(2, 2), \\
40 &5& 40 &(1, 2), \\
40 &6& 2 &(20, 3), \\
40 &6& 4 &(10, 3), \\
40 &6& 120 &(1, 1), \\
40 &8& 2 &(20, 4), \\
40 &8& 4 &(10, 4), \\
40 &8& 160 &(1, 1), (0, 2), \\
40 &10& 2 &(20, 5), \\
40 &10& 4 &(10, 5), \\
40 &10& 80 &(1, 2), (0, 5), \\
40 &10& 200 &(1, 1), (0, 2), \\
40 &12& 2 &(20, 6), \\
40 &12& 4 &(10, 6), \\
40 &12& 240 &(1, 1), (0, 6), \\
40 &14& 2 &(20, 7), \\
40 &14& 4 &(10, 7), \\
40 &14& 280 &(1, 1), \\
40 &15& 120 &(1, 2), \\
40 &16& 2 &(20, 8), \\
40 &16& 4 &(10, 8), \\
40 &16& 80 &(1, 3), \\
40 &16& 320 &(1, 1), (0, 4), (10, 0), \\
40 &18& 2 &(20, 9), \\
40 &18& 4 &(10, 9), \\
40 &18& 360 &(1, 1), \\
40 &20& 2 &(20, 10), \\
40 &20& 4 &(10, 10), \\
40 &20& 160 &(1, 2), (0, 5), \\
40 &20& 400 &(1, 1), (0, 2), \\
40 &22& 2 &(20, 11), \\
40 &22& 440 &(1, 1), \\
40 &24& 2 &(20, 12), \\
40 &24& 120 &(1, 5), \\
40 &24& 480 &(1, 1), (0, 6), \\
40 &25& 200 &(1, 2), \\
40 &26& 2 &(20, 13), \\
40 &26& 520 &(1, 1), \\
40 &28& 2 &(20, 14), \\
40 &28& 560 &(1, 1), (0, 14), \\
40 &30& 2 &(20, 15), \\
40 &30& 240 &(1, 2), (0, 15), \\
40 &30& 600 &(1, 1), (0, 6), \\
40 &32& 2 &(20, 16), \\
40 &32& 160 &(1, 3), \\
40 &32& 640 &(1, 1), (0, 8), (10, 0), \\
40 &34& 2 &(20, 17), \\
40 &34& 680 &(1, 1), \\
40 &35& 280 &(1, 2), \\
40 &36& 2 &(20, 18), \\
40 &36& 720 &(1, 1), (0, 18), \\
40 &38& 2 &(20, 19), \\
40 &38& 760 &(1, 1), \\
40 &40& 2 &(20, 20), \\
40 &40& 320 &(1, 2), (0, 5), \\
40 &40& 200 &(1, 3), (0, 8), \\
40 &40& 800 &(1, 1), (0, 2), \\
42 &1& 2 &(21, 0), \\
42 &1& 6 &(7, 0), \\
42 &1& 14 &(3, 0), \\
42 &1& 21 &(2, 0), \\
42 &2& 2 &(21, 1), \\
42 &2& 6 &(7, 1), \\
42 &2& 14 &(3, 1), \\
42 &2& 42 &(1, 1), \\
42 &3& 6 &(7, 1), \\
42 &3& 42 &(1, 1), \\
42 &4& 2 &(21, 2), \\
42 &4& 6 &(7, 2), \\
42 &4& 84 &(1, 1), \\
42 &6& 2 &(21, 3), \\
42 &6& 6 &(7, 3), \\
42 &6& 84 &(1, 1), (0, 3), \\
42 &6& 126 &(1, 1), (0, 2), \\
42 &7& 14 &(3, 3), \\
42 &8& 2 &(21, 4), \\
42 &8& 168 &(1, 1), \\
42 &9& 126 &(1, 1), \\
42 &10& 2 &(21, 5), \\
42 &10& 210 &(1, 1), \\
42 &12& 2 &(21, 6), \\
42 &12& 168 &(1, 1), (0, 6), (21, 0), \\
42 &12& 252 &(1, 1), (0, 4), \\
42 &14& 2 &(21, 7), \\
42 &14& 294 &(1, 1), (0, 2), \\
42 &15& 210 &(1, 1), \\
42 &16& 2 &(21, 8), \\
42 &16& 336 &(1, 1), \\
42 &18& 2 &(21, 9), \\
42 &18& 252 &(1, 1), (0, 9), \\
42 &18& 378 &(1, 1), (0, 6), (14, 0), \\
42 &20& 2 &(21, 10), \\
42 &20& 420 &(1, 1), \\
42 &21& 42 &(1, 8), \\
42 &21& 294 &(1, 1), (0, 3), \\
42 &22& 2 &(21, 11), \\
42 &22& 462 &(1, 1), \\
42 &24& 2 &(21, 12), \\
42 &24& 336 &(1, 1), (0, 12), (21, 0), \\
42 &24& 504 &(1, 1), (0, 8), \\
42 &26& 2 &(21, 13), \\
42 &26& 546 &(1, 1), \\
42 &27& 378 &(1, 1), \\
42 &28& 2 &(21, 14), \\
42 &28& 588 &(1, 1), (0, 4), \\
42 &30& 2 &(21, 15), \\
42 &30& 420 &(1, 1), (0, 15), \\
42 &30& 630 &(1, 1), (0, 10), \\
42 &32& 2 &(21, 16), \\
42 &32& 672 &(1, 1), \\
42 &33& 462 &(1, 1), \\
42 &34& 2 &(21, 17), \\
42 &34& 714 &(1, 1), \\
42 &36& 2 &(21, 18), \\
42 &36& 504 &(1, 1), (0, 18), (21, 0), \\
42 &36& 756 &(1, 1), (0, 12), (14, 0), \\
42 &38& 2 &(21, 19), \\
42 &38& 798 &(1, 1), \\
42 &39& 546 &(1, 1), \\
42 &40& 2 &(21, 20), \\
42 &40& 840 &(1, 1), \\
42 &42& 2 &(21, 21), \\
42 &42& 84 &(1, 8), (0, 21), \\
42 &42& 588 &(1, 1), (0, 3), \\
42 &42& 882 &(1, 1), (0, 2), \\
44 &1& 2 &(22, 0), \\
44 &1& 4 &(11, 0), \\
44 &1& 11 &(4, 0), \\
44 &1& 22 &(2, 0), \\
44 &2& 2 &(22, 1), \\
44 &2& 4 &(11, 1), \\
44 &2& 22 &(2, 1), \\
44 &2& 44 &(1, 1), \\
44 &4& 2 &(22, 2), \\
44 &4& 4 &(11, 2), \\
44 &4& 88 &(1, 1), (0, 2), \\
44 &6& 2 &(22, 3), \\
44 &6& 4 &(11, 3), \\
44 &6& 132 &(1, 1), \\
44 &8& 2 &(22, 4), \\
44 &8& 4 &(11, 4), \\
44 &8& 176 &(1, 1), (0, 4), (22, 0), \\
44 &10& 2 &(22, 5), \\
44 &10& 4 &(11, 5), \\
44 &10& 220 &(1, 1), \\
44 &12& 2 &(22, 6), \\
44 &12& 4 &(11, 6), \\
44 &12& 264 &(1, 1), (0, 6), \\
44 &14& 2 &(22, 7), \\
44 &14& 4 &(11, 7), \\
44 &14& 308 &(1, 1), \\
44 &16& 2 &(22, 8), \\
44 &16& 4 &(11, 8), \\
44 &16& 352 &(1, 1), (0, 8), (22, 0), \\
44 &18& 2 &(22, 9), \\
44 &18& 4 &(11, 9), \\
44 &18& 396 &(1, 1), \\
44 &20& 2 &(22, 10), \\
44 &20& 4 &(11, 10), \\
44 &20& 440 &(1, 1), (0, 10), \\
44 &22& 2 &(22, 11), \\
44 &22& 4 &(11, 11), \\
44 &22& 484 &(1, 1), (0, 2), \\
44 &24& 2 &(22, 12), \\
44 &24& 4 &(11, 12), \\
44 &24& 528 &(1, 1), (0, 12), (22, 0), \\
44 &26& 2 &(22, 13), \\
44 &26& 572 &(1, 1), \\
44 &28& 2 &(22, 14), \\
44 &28& 616 &(1, 1), (0, 14), \\
44 &30& 2 &(22, 15), \\
44 &30& 660 &(1, 1), \\
44 &32& 2 &(22, 16), \\
44 &32& 704 &(1, 1), (0, 16), (22, 0), \\
44 &34& 2 &(22, 17), \\
44 &34& 748 &(1, 1), \\
44 &36& 2 &(22, 18), \\
44 &36& 792 &(1, 1), (0, 18), \\
44 &38& 2 &(22, 19), \\
44 &38& 836 &(1, 1), \\
44 &40& 2 &(22, 20), \\
44 &40& 880 &(1, 1), (0, 20), (22, 0), \\
44 &42& 2 &(22, 21), \\
44 &42& 924 &(1, 1), \\
44 &44& 2 &(22, 22), \\
44 &44& 968 &(1, 1), (0, 2), \\
45 &1& 9 &(5, 0), \\
45 &1& 15 &(3, 0), \\
45 &3& 15 &(3, 1), \\
45 &3& 45 &(1, 1), \\
45 &5& 45 &(1, 2), \\
45 &6& 90 &(1, 1), \\
45 &9& 135 &(1, 1), (0, 3), \\
45 &10& 90 &(1, 3), \\
45 &12& 180 &(1, 1), \\
45 &15& 135 &(1, 2), (0, 5), \\
45 &15& 225 &(1, 1), (0, 3), \\
45 &18& 270 &(1, 1), (0, 6), \\
45 &20& 180 &(1, 3), \\
45 &21& 315 &(1, 1), \\
45 &24& 360 &(1, 1), \\
45 &25& 225 &(1, 2), \\
45 &27& 405 &(1, 1), (0, 9), (15, 0), \\
45 &30& 270 &(1, 3), (0, 10), \\
45 &30& 450 &(1, 1), (0, 6), \\
45 &33& 495 &(1, 1), \\
45 &35& 315 &(1, 2), \\
45 &36& 540 &(1, 1), (0, 12), \\
45 &39& 585 &(1, 1), \\
45 &40& 360 &(1, 3), \\
45 &42& 630 &(1, 1), \\
45 &45& 405 &(1, 2), (0, 5), \\
45 &45& 675 &(1, 1), (0, 3), \\
46 &1& 2 &(23, 0), \\
46 &1& 23 &(2, 0), \\
46 &2& 2 &(23, 1), \\
46 &2& 46 &(1, 1), \\
46 &4& 2 &(23, 2), \\
46 &4& 92 &(1, 1), \\
46 &6& 2 &(23, 3), \\
46 &6& 138 &(1, 1), \\
46 &8& 2 &(23, 4), \\
46 &8& 184 &(1, 1), \\
46 &10& 2 &(23, 5), \\
46 &10& 230 &(1, 1), \\
46 &12& 2 &(23, 6), \\
46 &12& 276 &(1, 1), \\
46 &14& 2 &(23, 7), \\
46 &14& 322 &(1, 1), \\
46 &16& 2 &(23, 8), \\
46 &16& 368 &(1, 1), \\
46 &18& 2 &(23, 9), \\
46 &18& 414 &(1, 1), \\
46 &20& 2 &(23, 10), \\
46 &20& 460 &(1, 1), \\
46 &22& 2 &(23, 11), \\
46 &22& 506 &(1, 1), \\
46 &24& 2 &(23, 12), \\
46 &24& 552 &(1, 1), \\
46 &26& 2 &(23, 13), \\
46 &26& 598 &(1, 1), \\
46 &28& 2 &(23, 14), \\
46 &28& 644 &(1, 1), \\
46 &30& 2 &(23, 15), \\
46 &30& 690 &(1, 1), \\
46 &32& 2 &(23, 16), \\
46 &32& 736 &(1, 1), \\
46 &34& 2 &(23, 17), \\
46 &34& 782 &(1, 1), \\
46 &36& 2 &(23, 18), \\
46 &36& 828 &(1, 1), \\
46 &38& 2 &(23, 19), \\
46 &38& 874 &(1, 1), \\
46 &40& 2 &(23, 20), \\
46 &40& 920 &(1, 1), \\
46 &42& 2 &(23, 21), \\
46 &42& 966 &(1, 1), \\
46 &44& 2 &(23, 22), \\
46 &44& 1012 &(1, 1), \\
46 &46& 2 &(23, 23), \\
46 &46& 1058 &(1, 1), (0, 2), \\
48 &1& 2 &(24, 0), \\
48 &1& 4 &(12, 0), \\
48 &1& 8 &(6, 0), \\
48 &1& 16 &(3, 0), \\
48 &1& 6 &(8, 0), \\
48 &1& 12 &(4, 0), \\
48 &1& 24 &(2, 0), \\
48 &2& 2 &(24, 1), \\
48 &2& 4 &(12, 1), \\
48 &2& 8 &(6, 1), \\
48 &2& 16 &(3, 1), \\
48 &2& 6 &(8, 1), \\
48 &2& 12 &(4, 1), \\
48 &2& 24 &(2, 1), \\
48 &2& 48 &(1, 1), \\
48 &3& 6 &(8, 1), \\
48 &3& 12 &(4, 1), \\
48 &3& 24 &(2, 1), \\
48 &3& 48 &(1, 1), \\
48 &4& 2 &(24, 2), \\
48 &4& 4 &(12, 2), \\
48 &4& 8 &(6, 2), \\
48 &4& 6 &(8, 2), \\
48 &4& 96 &(1, 1), (0, 2), \\
48 &6& 2 &(24, 3), \\
48 &6& 4 &(12, 3), \\
48 &6& 8 &(6, 3), \\
48 &6& 6 &(8, 3), \\
48 &6& 96 &(1, 1), (0, 3), \\
48 &6& 144 &(1, 1), (0, 2), \\
48 &8& 2 &(24, 4), \\
48 &8& 4 &(12, 4), \\
48 &8& 192 &(1, 1), (0, 2), \\
48 &9& 144 &(1, 1), \\
48 &10& 2 &(24, 5), \\
48 &10& 4 &(12, 5), \\
48 &10& 240 &(1, 1), \\
48 &12& 2 &(24, 6), \\
48 &12& 4 &(12, 6), \\
48 &12& 192 &(1, 1), (0, 3), \\
48 &12& 288 &(1, 1), (0, 2), \\
48 &14& 2 &(24, 7), \\
48 &14& 4 &(12, 7), \\
48 &14& 336 &(1, 1), \\
48 &15& 240 &(1, 1), \\
48 &16& 2 &(24, 8), \\
48 &16& 4 &(12, 8), \\
48 &16& 96 &(1, 3), (0, 8), \\
48 &16& 384 &(1, 1), (0, 2), \\
48 &18& 2 &(24, 9), \\
48 &18& 4 &(12, 9), \\
48 &18& 288 &(1, 1), (0, 9), \\
48 &18& 432 &(1, 1), (0, 6), (16, 0), \\
48 &20& 2 &(24, 10), \\
48 &20& 4 &(12, 10), \\
48 &20& 480 &(1, 1), (0, 10), \\
48 &21& 336 &(1, 1), \\
48 &22& 2 &(24, 11), \\
48 &22& 4 &(12, 11), \\
48 &22& 528 &(1, 1), \\
48 &24& 2 &(24, 12), \\
48 &24& 4 &(12, 12), \\
48 &24& 384 &(1, 1), (0, 3), \\
48 &24& 144 &(1, 3), (0, 8), \\
48 &24& 576 &(1, 1), (0, 2), \\
48 &26& 2 &(24, 13), \\
48 &26& 624 &(1, 1), \\
48 &27& 432 &(1, 1), \\
48 &28& 2 &(24, 14), \\
48 &28& 672 &(1, 1), (0, 14), \\
48 &30& 2 &(24, 15), \\
48 &30& 480 &(1, 1), (0, 15), \\
48 &30& 720 &(1, 1), (0, 10), \\
48 &32& 2 &(24, 16), \\
48 &32& 192 &(1, 3), (0, 16), (24, 0), \\
48 &32& 768 &(1, 1), (0, 4), (0, 12), (6, 0), \\
48 &33& 528 &(1, 1), \\
48 &34& 2 &(24, 17), \\
48 &34& 816 &(1, 1), \\
48 &36& 2 &(24, 18), \\
48 &36& 576 &(1, 1), (0, 9), \\
48 &36& 864 &(1, 1), (0, 6), (8, 0), (8, 6), \\
48 &38& 2 &(24, 19), \\
48 &38& 912 &(1, 1), \\
48 &39& 624 &(1, 1), \\
48 &40& 2 &(24, 20), \\
48 &40& 240 &(1, 3), \\
48 &40& 960 &(1, 1), (0, 10), \\
48 &42& 2 &(24, 21), \\
48 &42& 672 &(1, 1), (0, 21), \\
48 &42& 1008 &(1, 1), (0, 14), \\
48 &44& 2 &(24, 22), \\
48 &44& 1056 &(1, 1), (0, 22), \\
48 &45& 720 &(1, 1), \\
48 &46& 2 &(24, 23), \\
48 &46& 1104 &(1, 1), \\
48 &48& 2 &(24, 24), \\
48 &48& 768 &(1, 1), (0, 3), \\
48 &48& 288 &(1, 3), (0, 8), \\
48 &48& 1152 &(1, 1), (0, 2), \\
50 &1& 2 &(25, 0), \\
50 &1& 10 &(5, 0), \\
50 &1& 25 &(2, 0), \\
50 &2& 2 &(25, 1), \\
50 &2& 10 &(5, 1), \\
50 &2& 50 &(1, 1), \\
50 &4& 2 &(25, 2), \\
50 &4& 10 &(5, 2), \\
50 &4& 100 &(1, 1), \\
50 &5& 10 &(5, 2), \\
50 &5& 50 &(1, 2), \\
50 &6& 2 &(25, 3), \\
50 &6& 150 &(1, 1), \\
50 &8& 2 &(25, 4), \\
50 &8& 200 &(1, 1), \\
50 &10& 2 &(25, 5), \\
50 &10& 100 &(1, 2), (0, 5), \\
50 &10& 250 &(1, 1), (0, 2), \\
50 &12& 2 &(25, 6), \\
50 &12& 300 &(1, 1), \\
50 &14& 2 &(25, 7), \\
50 &14& 350 &(1, 1), \\
50 &15& 150 &(1, 2), \\
50 &16& 2 &(25, 8), \\
50 &16& 400 &(1, 1), \\
50 &18& 2 &(25, 9), \\
50 &18& 450 &(1, 1), \\
50 &20& 2 &(25, 10), \\
50 &20& 200 &(1, 3), (0, 10), (25, 0), \\
50 &20& 500 &(1, 1), (0, 4), \\
50 &22& 2 &(25, 11), \\
50 &22& 550 &(1, 1), \\
50 &24& 2 &(25, 12), \\
50 &24& 600 &(1, 1), \\
50 &25& 250 &(1, 2), (0, 5), \\
50 &26& 2 &(25, 13), \\
50 &26& 650 &(1, 1), \\
50 &28& 2 &(25, 14), \\
50 &28& 700 &(1, 1), \\
50 &30& 2 &(25, 15), \\
50 &30& 300 &(1, 2), (0, 15), \\
50 &30& 750 &(1, 1), (0, 6), \\
50 &32& 2 &(25, 16), \\
50 &32& 800 &(1, 1), \\
50 &34& 2 &(25, 17), \\
50 &34& 850 &(1, 1), \\
50 &35& 350 &(1, 2), \\
50 &36& 2 &(25, 18), \\
50 &36& 900 &(1, 1), \\
50 &38& 2 &(25, 19), \\
50 &38& 950 &(1, 1), \\
50 &40& 2 &(25, 20), \\
50 &40& 400 &(1, 3), (0, 20), (25, 0), \\
50 &40& 1000 &(1, 1), (0, 8), \\
50 &42& 2 &(25, 21), \\
50 &42& 1050 &(1, 1), \\
50 &44& 2 &(25, 22), \\
50 &44& 1100 &(1, 1), \\
50 &45& 450 &(1, 2), \\
50 &46& 2 &(25, 23), \\
50 &46& 1150 &(1, 1), \\
50 &48& 2 &(25, 24), \\
50 &48& 1200 &(1, 1), \\
50 &50& 2 &(25, 25), \\
50 &50& 500 &(1, 2), (0, 5), \\
50 &50& 1250 &(1, 1), (0, 2), \\
51 &51& 867 &(1, 1), (0, 3), \\
52 &52& 2 &(26, 26), \\
52 &52& 52 &(2, 10), (0, 26), \\
52 &52& 208 &(1, 5), (0, 13), \\
52 &52& 1352 &(1, 1), (0, 2), \\
54 &54& 2 &(27, 27), \\
54 &54& 972 &(1, 1), (0, 3), \\
54 &54& 1458 &(1, 1), (0, 2), \\
55 &55& 605 &(1, 2), (0, 5), \\
56 &56& 2 &(28, 28), \\
56 &56& 392 &(1, 3), (0, 8), \\
56 &56& 1568 &(1, 1), (0, 2), \\
57 &57& 1083 &(1, 1), (0, 3), \\
58 &58& 2 &(29, 29), \\
58 &58& 1682 &(1, 1), (0, 2), \\
60 &60& 2 &(30, 30), \\
60 &60& 720 &(1, 2), (0, 5), \\
60 &60& 1200 &(1, 1), (0, 3), \\
60 &60& 1800 &(1, 1), (0, 2), \\
62 &62& 2 &(31, 31), \\
62 &62& 1922 &(1, 1), (0, 2), \\
63 &63& 189 &(1, 8), (0, 21), \\
63 &63& 1323 &(1, 1), (0, 3), \\
64 &64& 2 &(32, 32), \\
64 &64& 512 &(1, 3), (0, 8), \\
64 &64& 2048 &(1, 1), (0, 2), \\
65 &65& 325 &(1, 5), (0, 13), \\
65 &65& 845 &(1, 2), (0, 5), \\
66 &66& 2 &(33, 33), \\
66 &66& 1452 &(1, 1), (0, 3), \\
66 &66& 2178 &(1, 1), (0, 2), \\
68 &68& 2 &(34, 34), \\
68 &68& 136 &(1, 13), (0, 34), \\
68 &68& 2312 &(1, 1), (0, 2), \\
69 &69& 1587 &(1, 1), (0, 3), \\
70 &70& 2 &(35, 35), \\
70 &70& 980 &(1, 2), (0, 5), \\
70 &70& 2450 &(1, 1), (0, 2), \\
72 &72& 2 &(36, 36), \\
72 &72& 1728 &(1, 1), (0, 3), \\
72 &72& 648 &(1, 3), (0, 8), \\
72 &72& 2592 &(1, 1), (0, 2), \\
74 &74& 2 &(37, 37), \\
74 &74& 2738 &(1, 1), (0, 2), \\
75 &75& 1125 &(1, 2), (0, 5), \\
75 &75& 1875 &(1, 1), (0, 3), \\
76 &76& 2 &(38, 38), \\
76 &76& 2888 &(1, 1), (0, 2), \\
78 &78& 2 &(39, 39), \\
78 &78& 468 &(1, 5), (0, 13), \\
78 &78& 2028 &(1, 1), (0, 3), \\
78 &78& 3042 &(1, 1), (0, 2), \\
80 &80& 2 &(40, 40), \\
80 &80& 1280 &(1, 2), (0, 5), \\
80 &80& 800 &(1, 3), (0, 8), \\
80 &80& 3200 &(1, 1), (0, 2), \\
81 &81& 2187 &(1, 1), (0, 3), \\
82 &82& 2 &(41, 41), \\
82 &82& 3362 &(1, 1), (0, 2), \\
84 &84& 2 &(42, 42), \\
84 &84& 84 &(2, 16), (0, 42), \\
84 &84& 336 &(1, 8), (0, 21), \\
84 &84& 2352 &(1, 1), (0, 3), \\
84 &84& 3528 &(1, 1), (0, 2), \\
85 &85& 1445 &(1, 2), (0, 5), \\
86 &86& 2 &(43, 43), \\
86 &86& 3698 &(1, 1), (0, 2), \\
87 &87& 2523 &(1, 1), (0, 3), \\
88 &88& 2 &(44, 44), \\
88 &88& 968 &(1, 3), (0, 8), \\
88 &88& 3872 &(1, 1), (0, 2), \\
90 &90& 2 &(45, 45), \\
90 &90& 1620 &(1, 2), (0, 5), \\
90 &90& 2700 &(1, 1), (0, 3), \\
90 &90& 4050 &(1, 1), (0, 2), \\
91 &91& 637 &(1, 5), (0, 13), \\
92 &92& 2 &(46, 46), \\
93 &93& 2883 &(1, 1), (0, 3), \\
94 &94& 2 &(47, 47), \\
95 &95& 1805 &(1, 2), (0, 5), \\
96 &96& 2 &(48, 48), \\
96 &96& 3072 &(1, 1), (0, 3), \\
96 &96& 1152 &(1, 3), (0, 8), \\
98 &98& 2 &(49, 49), \\
99 &99& 3267 &(1, 1), (0, 3), \\
100 &100& 2 &(50, 50), \\
100 &100& 2000 &(1, 2), (0, 5), \\
\end{xtabular}

}

\onecolumn

\end{document}